\newcommand\restrict[1]{\raisebox{-.3ex}{$|$}_{#1}}
\newcommand{\kk}{\Bbbk} 
\newcommand{\ZZ}{\mathbb{Z}}
\newcommand{\NN}{\mathbb{N}}
\newcommand{\II}{\mathbb{I}}
\newcommand{\eqbydef}{\mathrel{\overset{\makebox[0pt]{\mbox{\normalfont\tiny def}}}{=}}}
\newcommand{\lto}{\longrightarrow}
\newcommand{\lmapsto}{\longmapsto}
\newcommand{\eps}{\varepsilon}
\newcommand{\linhuell}{\operatorname{span}}
\newcommand{\id}[1]{\operatorname{id}_{#1}}
\newcommand{\Hom}[2]{ \operatorname{Hom}_{#1}\! \left( #2 \right) }
\newcommand{\Ext}{ \operatorname{Ext} }
\newcommand{\End}[2]{\operatorname{End}_{#1}(#2)}
\newcommand{\Mat}[2]{\operatorname{Mat}_{#1}(#2)}
\newcommand{\Aut}[2]{ \operatorname{Aut}_{#1}\! \left( #2 \right) }
\newcommand{\ot}{\otimes}
\newcommand{\Cent}{\operatorname{Cent}}
\newcommand{\co}{\operatorname{co}}
\newcommand{\tild}[1]{{\widetilde{#1}}}
\newcommand{\lmod}[1]{{#1}\!\operatorname{--mod}}
\newcommand{\soc}{{\operatorname{soc}}}
\newcommand{\sgn}{{\operatorname{sgn}}}
\newcommand{\opp}{{\operatorname{opp}}}
\theoremstyle{plain}
\newcounter{dummy}
\newtheorem{theorem}[dummy]{Theorem}
\newtheorem{proposition}[dummy]{Proposition}
\newtheorem{lemma}[dummy]{Lemma}
\newtheorem{corollary}[dummy]{Corollary}
\newtheorem{conjecture}[dummy]{Conjecture}
\theoremstyle{definition}
\newtheorem{definition}[dummy]{Definition}
\newtheorem{example}[dummy]{Example}
\theoremstyle{remark}
\newtheorem{remark}[dummy]{Remark}
\newcommand{\ph}{p}
\newcommand{\phs}[1]{{p_{#1}}}
\title{%
\begin{flushright}
	\vspace{-1.8cm} \normalfont{\small{\textsf{[ZMP-HH/19-20]}}}\\
	\vspace{-0.5cm} \normalfont{\small{\textsf{Hamburger Beiträge zur Mathematik Nr. 811}}}\\
	\vspace{-0.5cm} \normalfont{\small{\textsf{Oktober 2019}}}
\end{flushright}
\vspace{0.5cm}
On isotypic decompositions for non-semisimple Hopf algebras
}
\author{%
	\!\!\!\!\!\!\!Vincent Koppen$^*$ \quad
	Ehud Meir$^\#$ \quad
	Christoph Schweigert$^*$
	\\[0.25cm]
	\hspace{-1.8cm}  \normalsize{\texttt{\href{mailto:vincent.koppen@uni-hamburg.de}{vincent.koppen@uni-hamburg.de}}} \\  %
	\hspace{-1.8cm}  \normalsize{\texttt{\href{mailto:ehud.meir@abdn.ac.uk}{ehud.meir@abdn.ac.uk}}}\\
	\hspace{-1.8cm}  \normalsize{\texttt{\href{mailto:christoph.schweigert@uni-hamburg.de}{christoph.schweigert@uni-hamburg.de}}} \\[0.1cm]
	\hspace{-1.2cm} {\normalsize\slshape $^*$Fachbereich Mathematik, Universit\"{a}t Hamburg, Germany}\\[-0.1cm]
	\hspace{-1.2cm} {\normalsize\slshape $^\#$Institute of Mathematics, University of Aberdeen, United Kingdom
	}\\[-0.75cm]
}
\date{}
\begin{document}

\maketitle

\begin{abstract}
In this paper we study the isotypic decomposition of the regular module of a finite-dimensional Hopf algebra over an algebraically closed field of characteristic zero.
For a semisimple Hopf algebra, the idempotents realizing the isotypic decomposition can be explicitly expressed in terms of characters and the Haar integral.
In this paper we investigate Hopf algebras with the Chevalley property, which are not necessarily semisimple.
We find explicit expressions for idempotents in terms of Hopf-algebraic data, where the Haar integral is replaced by the regular character of the dual Hopf algebra.
For a large class of Hopf algebras, these are shown to form a complete set of orthogonal idempotents.
We give an example which illustrates that the Chevalley property is crucial.
\end{abstract}


\section{Introduction}
In this paper we study the decomposition of the regular module of a finite-dimensional Hopf algebra into isotypic components.
More precisely, let $\kk$ be an algebraically closed field of characteristic zero and $H$ a finite-dimensional Hopf algebra over $\kk$. 

If $H$ is semisimple, the Artin-Wedderburn theorem implies that as a left $H$-module $H$ decomposes into the direct sum of submodules $H_i$ isomorphic to the $\dim(S_i)$-fold direct sum $S_i^{\oplus \dim S_i}$ of the simple $H$-module $S_i$.
Here $i$ runs over the set $I$ of isomorphism classes of simple $H$-modules.
The decomposition $H=\bigoplus_{i\in I} H_i$ is called the \emph{isotypic decomposition} of $H$, seen as a left $H$-module, into its \emph{isotypic components} $H_i$.
It can be also described by the central orthogonal idempotents $(e_i)_{i\in I}$ in $H$ such that $e_i \in H_i$ and $\sum_{i\in I} e_i = 1$.
Then $H_i = He_i$ and the projection from $H = \bigoplus_{i \in I} H_i$ onto the direct summand $H_j$ is given by right multiplication by $e_j$ for all $j \in I$.

So far this only uses the algebra structure of $H$.
The following idea is well-known and lies at the heart of the theory of representations of a finite group.
For a semisimple Hopf algebra $H$ over $\kk$ with antipode $S$, the central orthogonal idempotents $e_i$ can be described explicitly in terms of the Haar integral and the irreducible characters of $H$ by the following \emph{character-projector formula} \cite[Cor.~4.6]{schneider} 
\begin{equation} \label{eq:character-projector-formula} 
e_i = \dim(S_i) \chi_i(S(\ell_{(1)})) \ell_{(2)}.
\end{equation}

Here, Sweedler notation is understood, and $\ell \in H$ is the \emph{Haar integral} for $H$, the unique (two-sided) integral of $H$, normalised to $\eps(\ell)=1$, which exists due to the Maschke theorem for semisimple Hopf algebras \cite[Theorem~5.1.8]{sweedler}.
The functional $\chi_i : H \lto \kk$ here is the character of the simple $H$-module $S_i$.

In this paper, we study generalizations of the character-projector formula \eqref{eq:character-projector-formula} for finite-dimensional Hopf algebras that are not necessarily semisimple.
Hence, we do not have a Haar integral at our disposal.
Instead, we use  the character of the regular representation of the Hopf algebra $H^*$ dual to $H$.
While for semisimple algebras there is a unique isotypic decomposition, in the non-semisimple case such decompositions are in general not unique anymore.
Our aim in this paper is to nevertheless construct one {\em explicit} decomposition using the Hopf-algebraic structure.

We obtain the strongest results for Hopf algebras with the Chevalley property, see Definition \ref{def:chevalley}.
This is a large class of finite-dimensional Hopf algebras, including semisimple Hopf algebras and basic Hopf algebras, i.e.\! Hopf algebras for which all simple modules are one-dimensional, the Hopf algebras dual to pointed Hopf algebras. 

Our main results are as follows:
for a finite-dimensional Hopf algebra with the Chevalley property, we give in Theorem \ref{thm:idempotents_for_one-dimensional_simples} an explicit idempotent for each one-dimensional simple module.
In Theorem \ref{thm:the-phs-sum-up-to-one}, we exhibit a necessary and sufficient condition involving the so-called Hecke algebra of the trivial representation (see Definition \ref{def:hecke-algebra}) ensuring that these idempotents form a complete set in the sense that they sum up to the identity.

In Conjecture \ref{conjecture}, we propose an explicit generalization of the character-projector formula \eqref{eq:character-projector-formula} for finite-dimensional Hopf algebras with the Chevalley property.
(The Chevalley property is essential, as witnessed by the counterexample given in Example \ref{exa:non-chevalley}.)
The two main theorems \ref{thm:idempotents_for_one-dimensional_simples} and \ref{thm:the-phs-sum-up-to-one} imply our Conjecture \ref{conjecture} for basic Hopf algebras that satisfy the condition on the Hecke algebra, as summarized in Corollary \ref{corollary}. 
Furthermore, in Proposition \ref{prop:chevalley-and-cochevalley} we prove that Conjecture \ref{conjecture} holds for Hopf algebras which have the Chevalley property and the dual Chevalley property.

Our original motivation is in the context of Kitaev models.
Here, a vector space is assigned to a surface with a triangulation.
The Kitaev construction assigns to each edge a copy of a Hopf algebra $H$ and to the surface with triangulation the tensor product of these Hopf algebras.
One singles out a subspace of this tensor product by specifying commuting projectors for each 0-cell and each 2-cell.
In a generalization, one assigns to a pair, consisting of a 0-cell and an adjacent 2-cell, called a site, a representation of the Drinfeld double $D(H)$ and then singles out different subspaces by using the projectors to isotypic components of $D(H)$, see \cite{balKi} for a related construction.
This shows that it is interesting also for applications to have such projectors explicitly available.
\ \\

This paper is organised as follows: 
In Section \ref{sec:finite-dimensional} we first review the definition of isotypic decompositions for finite-dimensional algebras and then obtain some first preliminary results about them.
In particular, in Proposition \ref{prop:characterization-of-semisimplicity} we give a characterisation of the semisimplicity of a Hopf algebra in terms of the centrality of the idempotent associated to the trivial isotypic component.
Section \ref{sec:chevalley} contains our main results for general finite-dimensional Hopf algebras with the Chevalley property.
Finally, in Section \ref{sec:examples} we first illustrate our results with an example of a basic Hopf algebra (Subsection \ref{subsec:example-basic-hopf-algebra}) and then provide further evidence for Conjecture \ref{conjecture} by studying in Subsection \ref{subsec:example-72-dim} an example of a Hopf algebra with the Chevalley property that is not covered by our general results in Section \ref{sec:chevalley}.

\begin{section}*{Acknowledgments}
We would like to thank Raz Vakil for his patient guidance in Magma, Johannes Berger for discussions and Matthieu Faitg for helpful comments.
VK, EM and CS are partially supported by the RTG 1670 “Mathematics inspired by String theory and Quantum Field Theory”.
CS is also partially supported by the Deutsche Forschungsgemeinschaft (DFG, German Research Foundation) under Germany’s Excellence Strategy - EXC 2121 “Quantum Universe”- QT.2.
\end{section}


\section{Isotypic decompositions for finite-dimensional algebras} \label{sec:finite-dimensional}

Let $H$ be a (not necessarily semisimple) finite-dimensional algebra over $\kk$.
Let, as before, $I$ denote the (finite) set of isomorphism classes of simple $H$-modules.
Then, as a projective left $H$-module, $H$ possesses a direct sum decomposition into projective $H$-submodules $H_i$,
\begin{equation} \label{eq:isotypic-decomposition}
H = \bigoplus_{i \in I} H_i,
\end{equation}
where $H_i \cong P_i^{\oplus n_i}$ is a direct sum of projective indecomposable submodules of the same isomorphism type $P_i$, the projective cover of the simple $H$-module given by $i \in I$.
\begin{definition}
We call $H_i$ an \emph{$i$-isotypic component} of $H$, for $i \in I$, and a direct sum decomposition into isotypic components an \emph{isotypic decomposition} of $H$.
\end{definition}
Specifying an isotypic decomposition is equivalent to specifying the corresponding orthogonal idempotents $(p_i)_{i\in I}$ such that $p_i \in H_i$ and $\sum_{i \in I} p_i = 1$.

\begin{remark}
Isotypic decompositions can clearly be defined for any projective left module over $H$.
However, in general there does not exist a description in terms of orthogonal idempotents in $H$, since for this we use that left $H$-module endomorphisms of $H$ are in bijection with right multiplications with elements of $H$: $\End{H}{H} \cong H^\opp$.
\end{remark}

By the Krull-Schmidt theorem, the multiplicities $n_i$ of the indecomposable modules inside each $H_i$ are unique for any isotypic decomposition.
In fact, they are given by the dimensions of the simple $H$-modules.
Indeed, let $S_i$ be a simple $H$-module in the isomorphism class $i \in I$ and let $H_i \cong P_i^{\oplus n_i}$, where $P_i$ is the projective cover of $S_i$ and $n_i \in \NN$.
Then we have an isomorphism of vector spaces
\begin{equation*}
S_i \cong \Hom{H}{H, S_i}
\cong \Hom{H}{\oplus_{j \in I} P_j^{\oplus n_j}, S_i}
\cong \Hom{H}{P_i,S_i}^{\oplus n_i}.
\end{equation*}
Since $\Hom{H}{P_i, S_i}$ is one-dimensional, this implies that $n_i = \dim(S_i)$.

Another point of view on isotypic decompositions is the following.
Let $J \subseteq H$ be the Jacobson radical of the finite-dimensional algebra $H$, i.e. the maximal nilpotent ideal of $H$ (as a general reference see \cite{bresar}).
Then the algebra $H/J$ is the maximal semisimple quotient algebra of $H$ with natural surjection of algebras $\pi : H \lto H/J$.

\begin{lemma} \label{lem:isotypic-decomposition-as-a-lift-of-the-center}
An isotypic decomposition of $H$ is equivalent to an algebra map $s : Z(H/J) \lto H$ such that $\pi \circ s = \id{Z(H/J)}$.
\end{lemma}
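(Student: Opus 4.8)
The plan is to reduce everything to the structure of the maximal semisimple quotient $H/J$ and its central primitive idempotents. Since $H/J$ is semisimple and $\kk$ is algebraically closed, its center is $Z(H/J) \cong \kk^{\,|I|}$, and the central primitive idempotents $(\bar e_i)_{i \in I}$ form a complete orthogonal system, one idempotent for each block, i.e.\ for each $i \in I$. Because these $\bar e_i$ span $Z(H/J)$ as an algebra, an algebra map $s : Z(H/J) \lto H$ is completely determined by the elements $p_i := s(\bar e_i)$; moreover, as $s$ is multiplicative and unital, the $(p_i)_{i\in I}$ automatically form a complete orthogonal system of idempotents in $H$, and conversely any such system defines an algebra map. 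The splitting condition $\pi \circ s = \id{Z(H/J)}$ translates exactly into $\pi(p_i) = \bar e_i$ for all $i$. Thus the claim reduces to a bijection between isotypic decompositions of $H$ and complete orthogonal systems of idempotents $(p_i)_{i\in I}$ \emph{lifting} the block idempotents, i.e.\ with $\pi(p_i) = \bar e_i$.

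For the direction from idempotents to decompositions, I would start with such a lifting system and set $H_i := H p_i$. Orthogonality and $\sum_{i} p_i = 1$ give the direct sum decomposition $H = \bigoplus_{i \in I} H p_i$, so it remains to identify each $H p_i$ as the $i$-isotypic component. The key computation uses the standard isomorphism $\Hom{H}{H p_i, S_j} \cong p_i S_j$. Since $J$ annihilates every simple module, $p_i$ acts on $S_j$ through its image $\pi(p_i) = \bar e_i$, and $\bar e_i$ acts as $\delta_{ij}\id{S_j}$. Hence $\Hom{H}{H p_i, S_j} = 0$ for $j \neq i$, so the top of $H p_i$ involves only $S_i$ and therefore $H p_i \cong P_i^{\oplus m_i}$ for some $m_i$. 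The multiplicity is then pinned down to $m_i = \dim(S_i)$ by the Krull--Schmidt theorem together with the dimension count already carried out above. This shows $(H p_i)_{i \in I}$ is an isotypic decomposition.

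For the reverse direction I would run the same argument backwards: starting from an isotypic decomposition with idempotents $p_i \in H_i$ and $\sum_i p_i = 1$, the elements $\bar e_j := \pi(p_j)$ form a complete orthogonal system of idempotents in $H/J$, and the Hom computation shows that the semisimple module $(H/J)\bar e_j$ has top isomorphic to a multiple of $S_j$ only; in a semisimple algebra this forces $\bar e_j$ to be precisely the $j$-th block idempotent. Setting $s(\bar e_i) := p_i$ then yields the desired splitting. That the two constructions are mutually inverse is immediate from $H_i = H p_i$ and $p_i = s(\bar e_i)$. The main obstacle, and the only step requiring genuine content beyond bookkeeping, is the identification in both directions that \emph{lifting the block idempotent $\bar e_i$ is equivalent to $H p_i$ being the $i$-isotypic component}; this is exactly what the top/Hom computation together with the Krull--Schmidt multiplicity count delivers.
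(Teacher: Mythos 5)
Your proposal is correct and takes essentially the same route as the paper: both reduce the lemma to the correspondence between isotypic decompositions and complete orthogonal systems of idempotents lifting the central primitive idempotents of $H/J$, and both identify $H p_i$ as the $i$-isotypic component by relating its top to $(H/J)\pi(p_i)$ and using that a projective module is the projective cover of its top. Your computation $\Hom{H}{Hp_i, S_j} \cong p_i S_j$ together with the Krull--Schmidt multiplicity count is just a repackaging of the paper's direct identification $Hp_i / J(Hp_i) \cong (H/J)e_i \cong S_i^{\oplus \dim S_i}$; if anything, you are more explicit than the paper in verifying $\pi(p_i) = \bar e_i$ in the decomposition-to-section direction.
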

\begin{proof}
Indeed, given an isotypic decomposition $H = \bigoplus_{i \in I} H p_i$, mapping $Z(H/J) \ni e_i \mapsto p_i \in H$, where $e_i$ are the central orthogonal idempotents of the semisimple algebra $H/J$, gives us such an algebra map $s : Z(H/J) \lto H$ because $Z(H/J) = \text{span}_{\kk}\{e_i\}$.

Conversely, given an algebra map $s : Z(H/J) \lto H$ such that $\pi \circ s = \id{Z(H/J)}$, the images of the central orthogonal idempotents $e_i \in H/J$ give us the orthogonal idempotents $p_i := s(e_i)$ of an isotypic decomposition of $H$.
For this we need to show that $H p_i$ is a projective cover of $S_i^{\oplus\dim(S_i)}$.
Indeed, $H p_i$ is a projective cover of $H p_i / J(H p_i)$ and we have an $H$-module isomorphism $H p_i / J(H p_i) \cong (H/J) e_i \cong S_i^{\oplus \dim S_i}$.
The first isomorphism follows from $J(H p_i) = H p_i \cap J(H) = \ker(\pi\restrict{H p_i})$, together with the fact that $(H/J) e_i$ is the image of the restricted quotient map $\pi\restrict{H p_i} : H p_i \lto H/J$.
\end{proof}

\subsection{(Non-)uniqueness of isotypic decompositions}

In general, an isotypic decomposition is not unique.
We can characterize the uniqueness of such a decomposition as follows.
\begin{lemma} \label{lem:uniqueness-of-isotypic-decomposition}
Let $H$ be a finite-dimensional algebra over $\kk$.
A direct sum decomposition $H = \bigoplus_{i \in J} H_i$ into left $H$-submodules, where the isomorphism types of the summands $H_i$ are prescribed, is unique if and only if any left $H$-module automorphism of $H$ commutes with the projections of the direct sum $\bigoplus_{i \in J} H_i$.
\end{lemma}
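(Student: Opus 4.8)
The plan is to recast both sides of the equivalence as the single condition that every left $H$-module automorphism of $H$ preserves each summand $H_i$ setwise, and then to pass freely between decompositions and automorphisms, using that automorphisms act transitively on the decompositions of a fixed type. Throughout, let $p_i \in \End{H}{H}$ denote the idempotent projecting $H = \bigoplus_{i \in J} H_i$ onto $H_i$, so that $\sum_{i \in J} p_i = \id{H}$, $p_i p_j = \delta_{ij} p_i$, and $H_i = p_i(H)$. No use of the identification $\End{H}{H} \cong H^\opp$ is needed; it is cleaner to argue with endomorphisms directly.

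First I would record the elementary observation that, for an automorphism $\phi$ of the left $H$-module $H$, the condition $\phi p_i = p_i \phi$ for all $i$ is equivalent to $\phi(H_i) = H_i$ for all $i$. In one direction, if $\phi$ commutes with $p_i$ then $\phi(H_i) = \phi(p_i(H)) = p_i(\phi(H)) = p_i(H) = H_i$. In the other, writing $x = \sum_i p_i(x)$ and applying $\phi$ gives $\phi(x) = \sum_i \phi(p_i(x))$ with each $\phi(p_i(x)) \in \phi(H_i) = H_i$; comparing this with the direct-sum expansion $\phi(x) = \sum_i p_i(\phi(x))$ and using that such an expansion in $\bigoplus_i H_i$ is unique yields $p_i(\phi(x)) = \phi(p_i(x))$, i.e. $p_i \phi = \phi p_i$. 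Thus the right-hand side of the lemma says precisely that every automorphism fixes each $H_i$.

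The central step is to identify the decompositions with the prescribed isomorphism types as exactly the automorphic images of the given one. For any automorphism $\phi$, the family $(\phi(H_i))_{i \in J}$ is again an internal direct-sum decomposition of $H$, and $\phi\restrict{H_i}$ is an isomorphism $H_i \cong \phi(H_i)$, so the types are the prescribed ones. Conversely, given any decomposition $H = \bigoplus_{i \in J} H_i'$ with $H_i' \cong H_i$ for each $i$, I would choose $H$-module isomorphisms $\psi_i : H_i \to H_i'$ and assemble them into $\phi := \bigoplus_{i \in J} \psi_i$; since $\bigoplus_i H_i$ and $\bigoplus_i H_i'$ both equal $H$, this $\phi$ is an automorphism of $H$ with $\phi(H_i) = H_i'$. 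With this correspondence both implications are immediate. If the decomposition is unique, then for every $\phi$ the competing decomposition $(\phi(H_i))_i$ must coincide with $(H_i)_i$, giving $\phi(H_i) = H_i$ for all $i$, so by the observation $\phi$ commutes with the $p_i$. If, conversely, every automorphism fixes the summands, then any competitor $(H_i')_i$, realised as $(\phi(H_i))_i$, satisfies $H_i' = \phi(H_i) = H_i$, proving uniqueness. The only point demanding care is this last assembly: one must match the two decompositions index by index (legitimate, as the prescribed types are attached to the indices) and use that both are internal direct sums of the same module $H$, so that $\bigoplus_i \psi_i$ is genuinely a bijective endomorphism of $H$ rather than merely an abstract isomorphism between external direct sums.
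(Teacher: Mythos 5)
Your proposal is correct and follows essentially the same route as the paper: in one direction you apply an arbitrary automorphism to the given decomposition and invoke uniqueness, and in the other you assemble isomorphisms of the summands into a global automorphism and invoke the hypothesis, exactly as the paper does. Your explicit preliminary observation that commuting with the projections $p_i$ is equivalent to preserving each $H_i$ setwise is a clean repackaging of manipulations the paper performs inline, not a different argument.
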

\begin{proof}
Let $H = \bigoplus_{i \in J} H_i$ be a unique decomposition into components of prescribed isomorphism type and let $\varphi : H\lto H$ be an $H$-module automorphism.
Then $H = \bigoplus_i \varphi(H_i)$ together with the projections $(\varphi \circ p_i \circ \varphi^{-1} : H \to \varphi(H_i))_i$ is also such a decomposition and therefore $H_i = \varphi(H_i) \in H$ and $p_i = \varphi \circ p_i \circ \varphi^{-1}$ for all $i \in J$.

It remains to prove the implication in the other direction.
For this assume that any $H$-automorphism $\varphi : H\lto H$ commutes with the projections $p_i : H \lto  H_i$ for all $i \in J$.
Now let $H = \bigoplus_i H'_i$ be another decomposition into components of the prescribed isomorphism type with projections $p'_i : H \lto H'_i$.
There are isomorphisms $\varphi_i : H_i \lto H'_i$.
Together they give an isomorphism $\varphi = \bigoplus_i \varphi_i : H = \bigoplus_i H_i \lto \bigoplus_i H'_i = H$, which by construction satisfies $\varphi \circ p_i = p'_i \circ \varphi$ for all $i \in J$.
But by assumption an $H$-automorphism $\varphi$ commutes with the projections $p_i : H \lto H_i$ for all $i \in J$, i.e. we have $\varphi \circ p_i = p_i \circ \varphi$ for all $i \in J$.
Together this implies $p'_i \circ \varphi = p_i \circ \varphi$ for all $i \in J$ and by invertibility of $\varphi$ this proves the claim that $p_i = p'_i$.
\end{proof}
\begin{remark}
Furthermore, we can describe the set of decompositions of $H$ into isotypic components as follows.
Choose one such decomposition $(p_i : H \lto H_i)_i$.
Mapping an $H$-linear automorphism $\varphi \in \Aut{H}{H}$ to the decomposition $(\varphi \circ p_i \circ \varphi^{-1} : H \lto \varphi(H_i))_i$ induces a bijection
\[ \text{Aut}_H (H)\ /\ \prod_i \text{Aut}_H(H_i) \xlongrightarrow{\sim} \Big\{ (p'_i : H \to H'_i)_i \text{ isotypic decomposition}  \Big\} . \]
Denoting by $\Cent_{H^\times}\{ p_i \vert i \in I \}$ the centralizer of the set $(p_i)_{i\in I}$ in $H^\times$, we can thus also describe the set of isotypic decompositions as the homogeneous set
\[ H^\times / \Cent_{H^\times}\{ p_i \vert i \in I \} . \]
\end{remark}

If $H$ is a semisimple Hopf algebra, then the idempotents $e_i$ in equation \eqref{eq:character-projector-formula} giving us the isotypic decomposition are central (as they are for any semisimple algebra by the Artin-Wedderburn theorem), implying by Lemma \ref{lem:uniqueness-of-isotypic-decomposition} the uniqueness of the isotypic decomposition in the semisimple case.

Conversely, we obtain the following characterization of semisimplicity for a Hopf algebra $H$:
\begin{proposition} \label{prop:characterization-of-semisimplicity}
A finite-dimensional Hopf algebra $H$ over $\kk$ is semisimple if and only if there exists a decomposition $H = \bigoplus_{i\in I} H e_i$ into isotypic components such that $e_{\II} \in H$, the idempotent corresponding to the trivial $H$-module, is central.
\end{proposition}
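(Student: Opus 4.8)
The plan is to prove the two implications separately; the ``only if'' direction is immediate, and all of the content sits in the ``if'' direction.

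If $H$ is semisimple, then by the Artin--Wedderburn theorem the central primitive idempotents of $H$ realize the (unique) isotypic decomposition, and in particular the idempotent $e_\II$ attached to the trivial block is central. This settles one direction. For the converse, suppose that some isotypic decomposition $H=\bigoplus_{i\in I}He_i$ has $e_\II$ central. Then $A:=He_\II$ is a two-sided ideal and a direct algebra factor of $H$ with unit $e_\II$, and as a left $A$-module it coincides with the $\II$-isotypic component, so $A\cong P_\II$ (recall $\dim S_\II=1$). Since its regular representation is the single indecomposable projective $P_\II$, the algebra $A$ has a unique simple module, namely the one-dimensional $S_\II=\kk_\eps$; hence $A$ is local with $A/\operatorname{rad}(A)\cong\kk$, the quotient map being the restriction of $\eps$.

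I would then reduce the whole statement to showing that the trivial module is projective. Indeed, $\operatorname{rad}(A)=0$ would give $A=\kk_\eps$, that is $P_\II=\kk_\eps$, forcing $\kk_\eps$ to be a direct summand of the free module $H$; by the Maschke theorem for Hopf algebras \cite{sweedler} this is equivalent to semisimplicity of $H$. So everything comes down to proving $\operatorname{rad}(A)=0$. Here the point is that the semisimple module $\operatorname{rad}(A)/\operatorname{rad}(A)^2$ is a direct sum of copies of the unique simple $\kk_\eps$, and the number of copies equals $\dim_\kk\Ext^1_H(\kk_\eps,\kk_\eps)$.

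The key step, and the place where the characteristic-zero hypothesis enters decisively, is the identification $\Ext^1_H(\kk_\eps,\kk_\eps)\cong\operatorname{Prim}(H^*)$: a self-extension of the trivial module amounts to an algebra map $H\to\kk[t]/(t^2)$ lifting $\eps$, i.e.\ an $\eps$-derivation $H\to\kk$, which is precisely a primitive element of the dual Hopf algebra $H^*$. A finite-dimensional Hopf algebra over a field of characteristic zero has no nonzero primitive elements, since the powers of a nonzero primitive would be linearly independent; hence $\Ext^1_H(\kk_\eps,\kk_\eps)=0$. Therefore $\operatorname{rad}(A)=\operatorname{rad}(A)^2$, and as $\operatorname{rad}(A)$ is nilpotent, Nakayama's lemma yields $\operatorname{rad}(A)=0$, completing the argument.

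I expect the main obstacle to be this cohomological step: one must match extensions of the trivial module with point-derivations, then with primitives of $H^*$, and finally invoke the vanishing of primitives in characteristic zero. This is exactly the point at which the hypothesis on $\kk$ cannot be dropped, and the rest of the proof is bookkeeping about the local factor $A=He_\II$ and the standard equivalence between projectivity of $\kk_\eps$ and semisimplicity of $H$.
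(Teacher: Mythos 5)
Your proof is correct, but it takes a genuinely different route from the paper's. The paper uses the centrality of $e_{\II}$ directly on extensions: it shows that $e_{\II}$ must annihilate every non-trivial simple $S$ (otherwise one would get a non-zero map $He_{\II}\to S$ from the projective cover of $\II$), so that acting by $e_{\II}$ splits any extension of $\II$ by $S$, giving $\Ext_H^1(\II,S)=0$; it then cites \cite[Thm.~4.4.1]{etingof+et_al} for $\Ext_H^1(\II,\II)=0$, bootstraps with long exact sequences along a composition series to get $\Ext_H^1(\II,N)=0$ for every module $N$, hence $\II$ is projective, and concludes semisimplicity because tensoring with a projective module produces projectives. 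You instead use centrality structurally: $A=He_{\II}$ is a block of $H$, a local algebra whose regular module is $P_{\II}$, so everything collapses to showing $\operatorname{rad}(A)=0$; for that you need only the single vanishing $\Ext_H^1(\II,\II)=0$, which you prove from scratch by identifying self-extensions of the trivial module with primitive elements of $H^*$ and using that a finite-dimensional Hopf algebra in characteristic zero has no non-zero primitives --- exactly the characteristic-zero input that the paper outsources to the EGNO citation --- and you finish with Maschke/integral splitting instead of the tensoring argument. Your route is more self-contained, never needs $\Ext^1(\II,S)$ for $S\neq\II$ or the long-exact-sequence bootstrap, and makes explicit where characteristic zero enters; the paper's route avoids the structure theory of local algebras (the identification of $\operatorname{rad}(A)/\operatorname{rad}(A)^2$ with $\Ext^1$) and produces the stronger intermediate statement $\Ext_H^1(\II,N)=0$ for all $N$. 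All the individual steps you invoke (block decomposition from a central idempotent, $\dim\operatorname{rad}(A)/\operatorname{rad}(A)^2=\dim\Ext^1_A(\kk_\eps,\kk_\eps)$ for a local algebra, the equality $\Ext^1_A=\Ext^1_H$ on the block, self-extensions of $\II$ as $\eps$-derivations, vanishing of primitives in characteristic zero, and projectivity of $\II$ implying semisimplicity via integrals) are standard and correctly applied.
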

\begin{proof}
The only-if part of the statement is implied by the Artin-Wedderburn theorem.

For the rest of the proof assume that $e_\II$ is central.
This implies that $\Ext_H^1(\II,S)=0$ for any non-trivial simple $H$-module $S$ as we show next.
Let $0 \to S \to M \to \II \to 0$ be a short exact sequence in $\lmod{H}$, where $M$ is an arbitrary $H$-module.
Since $e_\II\in H$ is central, acting with this element defines an $H$-module morphism on any $H$-module, in particular $e_\II : S \to S$.
If there were an $x\in S$ such that $e_\II.x \neq 0$, then this would define a non-zero $H$-module map $H e_\II \lto S, h e_\II \lmapsto h e_\II . x$.
But for a simple $H$-module $S$ non-isomorphic to the trivial one $\II$, this does not exist, since $H e_\II$ is the projective cover of $\II$.
Hence, we must have $e_\II . S = 0$.
This implies that we obtain a well-defined morphism $e_\II : M / S \to M$, which provides a splitting of the short exact sequence, since $M/S \cong \II$ by exactness of the sequence and, hence, $e_\II$ acts on $M/S$ as the identity.
We have thus shown that $\Ext_H^1(\II,S)=0$ for any non-trivial simple $H$-module $S$, using that $e_\II$ is central.

Due to Theorem 4.4.1 in \cite{etingof+et_al}, we also have $\Ext_H^1(\II,\II)=0$.

Now let $N$ be an arbitrary $H$-module.
Since a short exact sequence of $H$-modules induces a long exact sequence of corresponding Ext groups, we can use a composition series for $N$ to inductively show that there exists a simple $H$-module $S$ (the smallest module in the composition series) such that $\Ext_H^1(\II,S)$ surjects to $\Ext_H^1(\II,N)$.
Since we have shown that $\Ext_H^1(\II,S)=0$, this implies that $\Ext_H^1(\II,N)=0$ for all $N$, and hence the trivial $H$-module $\II$ is projective.
This implies that every $H$-module is projective, since the tensor product of a projective module with any other module is projective.
We conclude that $H$ is semisimple.
\end{proof}

\begin{example} \label{exa:sweedlers-algebra}
Let $H=H_4=\kk\langle g,x\rangle / (g^2=1, x^2=0, gx=-xg)$ be Sweedler's four-dimensional Hopf algebra, which reappears in more detail in Example \ref{exa:sweedlers-hopf-algebra}.
Consider the decomposition $H = P_0 \oplus P_1 := H \frac{1+g}{2} \oplus H \frac{1-g}{2}$.
Then there exists an automorphism $\varphi : H \to H$ of $H$ as a left $H$-module such that $\varphi(P_0)\neq P_0$.
Indeed, let $\varphi$ be given by right multiplication by the invertible element $1+\frac{1+g}{2}x$.
This does not commute with the element $\frac{1+g}{2}$, as can be easily computed.
Hence $\frac{1+g}{2}$ is not central and $H_4$ is not semisimple.

We can therefore see that $$ H = H \frac{1+g}{2} \oplus H \frac{1-g}{2} $$ and $$ H = H \frac{1+g}{2}(1+x) \oplus H \frac{1-g}{2} $$ are two different isotypic decompositions for $H_4$.
\end{example}

\subsection{Isotypic decompositions for self-injective algebras}

When we are dealing with a self-injective algebra $H$, i.e.\ the regular $H$-module is injective, then finding an isotypic decomposition simplifies to finding isotypic components individually, as we will show in this subsection.
In particular, this applies to Hopf algebras since they are Frobenius algebras and hence self-injective.

For a simple $H$-module $S_i$, $i \in I$, write $\phi(i)$ for the (isomorphism class of) the socle $\soc(P_i)$ of the projective cover $P_i$ of $S_i$.
Since $P_i$ is also injective, because $H$ is a self-injective algebra, it holds that $P_i$ is the injective envelope of $\phi(i)$.
Since $P_i$ is indecomposable, this means that $\phi(i)$ is simple.
So this gives us a bijection $\phi: I \lto I$, which is called the \emph{Nakayama permutation} of $H$(cf. \cite{farnsteiner}). 
In particular, if $i \neq j$, we have $\soc(P_i) \neq \soc(P_j)$.

\begin{proposition} \label{prop:isotypic_decomposition}
Assume that for any $i\in I$, $H_i \subseteq H$ is an $i$-isotypic component of $H$, i.e. $H_i \cong P_i^{\oplus \dim(S_i)}$.
Then the map \[ \Psi:\bigoplus_{i\in I}H_i\lto H \] is an isomorphism of left $H$-modules.
\end{proposition}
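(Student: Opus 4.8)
The plan is to read $\Psi$ as the canonical summation map sending $(h_i)_{i \in I} \in \bigoplus_{i \in I} H_i$ to $\sum_{i \in I} h_i \in H$, and to prove it is an isomorphism by first reducing to injectivity via a dimension count and then establishing injectivity through an analysis on socles. This is the natural route because self-injectivity enters precisely through the structure of the socles of the $P_i$.

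First I would record that source and target have equal dimension. From the multiplicity computation earlier in the text, the regular module decomposes as $H \cong \bigoplus_{i \in I} P_i^{\oplus \dim S_i}$, so that $\dim H = \sum_{i \in I} \dim(S_i)\dim(P_i)$; on the other hand $\dim\big(\bigoplus_{i \in I} H_i\big) = \sum_{i \in I} \dim\big(P_i^{\oplus \dim S_i}\big)$ equals the same number. Hence $\Psi$ is a morphism between vector spaces of equal (finite) dimension, and it suffices to show that $\Psi$ is injective.

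The heart of the argument is the behaviour of $\Psi$ on socles. Since $H$ is self-injective, each $P_i$ is injective with \emph{simple} socle $\soc(P_i) = \phi(i)$, where $\phi : I \lto I$ is the Nakayama permutation, in particular a bijection. Consequently $\soc(H_i) \cong \phi(i)^{\oplus \dim S_i}$ is $\phi(i)$-isotypic, and for $i \neq j$ the socles $\soc(H_i)$ and $\soc(H_j)$ are isotypic of the \emph{distinct} simple types $\phi(i) \neq \phi(j)$. Viewing all the $\soc(H_i)$ as submodules of the semisimple module $\soc(H)$, distinct isotypic components of a semisimple module intersect trivially and their sum is direct; therefore the restriction $\Psi|_{\soc} : \bigoplus_{i \in I} \soc(H_i) = \soc\big(\bigoplus_{i \in I} H_i\big) \lto \soc(H)$ is injective.

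Finally I would promote socle-injectivity to injectivity. If $\ker\Psi$ were nonzero, then, being a nonzero module over a finite-dimensional algebra, it would have nonzero socle $\soc(\ker\Psi) \subseteq \soc\big(\bigoplus_{i \in I} H_i\big)$, and this nonzero submodule of the socle would be annihilated by $\Psi$, contradicting the injectivity of $\Psi|_{\soc}$ just established. Hence $\Psi$ is injective and, by the dimension count, an isomorphism. I expect the only delicate point to be the socle bookkeeping of step three — specifically invoking the Nakayama permutation to guarantee that the socles of the different $H_i$ land in \emph{different} isotypic components of $\soc(H)$ — since this is exactly where self-injectivity is indispensable; the remaining steps are formal.
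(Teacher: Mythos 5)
Your proof is correct and takes essentially the same route as the paper: a dimension count reduces the statement to injectivity of $\Psi$, and injectivity is then forced by the self-injectivity of $H$ via the Nakayama permutation, which guarantees that the socles $\soc(H_i) \cong \phi(i)^{\oplus \dim S_i}$ of the various $H_i$ are isotypic of pairwise distinct simple types. The only (cosmetic) difference is in the final bookkeeping: you pass through injectivity of $\Psi$ on socles and the fact that a nonzero kernel would have nonzero socle, whereas the paper runs a minimal-counterexample argument producing a common simple submodule in an intersection — both hinge on exactly the same key fact.
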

\begin{proof}
Due to dimension considerations it is enough to prove that $\Psi$ is injective.
Assume the opposite.
Then choose a minimal subset $J \subseteq I$ such that $\Psi\vert : \bigoplus_{i\in J} H_i\lto H$ is not injective.
Then for some $i \in J$ we have that $\Psi\vert : \bigoplus_{j \in J \setminus\{i\}} H_j \lto H$ is injective and $H_i\cap \Psi(\bigoplus_{j \in J \setminus\{i\}} H_j) \neq 0$.
If two modules intersect, then there is a simple module contained in the intersection.
But the biggest semisimple submodule of $H_i$ is the socle $\soc(H_i) \cong \soc(P_i)^{\oplus \dim(S_i)} = \phi(i)^{\oplus \dim(S_i)}$.
The biggest semisimple submodule of the other module is
\[ \bigoplus_{j \in J \setminus\{i\}}\soc(H_j)\cong \bigoplus_{j \in J \setminus\{i\}} \soc(P_j)^{\oplus \dim(S_j)} = \bigoplus_{j \in J \setminus\{i\}} \phi(j)^{\oplus \dim(S_j)} . \]
Since these two modules do not have any common submodule (up to isomorphism), we arrive at a contradiction.
\end{proof}

So the conclusion of this proposition is:
Once we pick for every $i \in I$ a submodule $H_i$ which is of the right isomorphism type (that is, it is an $i$-isotypic component), this will give us a direct sum decomposition of $H$ into isotypic components.

\section{Isotypic decompositions for Hopf algebras with the Chevalley property} \label{sec:chevalley}

As we have seen, isotypic decompositions of non-semisimple Hopf algebras are in general not unique.
Our goal is to nevertheless construct one explicit isotypic decomposition for a Hopf algebra by generalizing to the non-semisimple case the idempotents given by the character-projector formula \eqref{eq:character-projector-formula}, which makes use of the additional Hopf-algebraic structure such as the Haar integral.

Note that the idempotent $e_{\II}$ for the isotypic component corresponding to the trivial module is given by the Haar integral: $e_{\II} = (\eps(S(\ell_{(1)}) \ell_{(2)})(\Delta(\ell)) = \ell$.
In a non-semisimple Hopf algebra any (say, left) integral $\ell \in H$ satisfies $\eps(\ell)=0$ and, hence, $\ell^2 = \eps(\ell) \ell = 0$.
So $\ell$ is not an idempotent anymore.
Therefore the character-projector formula \eqref{eq:character-projector-formula} does not generalize to anything desirable in terms of idempotents for an isotypic decomposition in the non-semisimple case.

Instead we want to take into account that for a semisimple Hopf algebra the Haar integral coincides with the (appropriately normalized) character of the regular representation of the dual algebra:
\begin{proposition}[Prop.~1~b) in \cite{larsonRadford}] \label{prop:haar-integral-given-by-character}
Let $H$ be a semisimple finite-dimensional Hopf algebra over $\kk$.
Then the Haar integral of $H$ is equal to $\ph := \frac{1}{\dim(H)} \chi_{H^*}$, where $\chi_{H^*} \in H^{**} \cong H$ is the character of the regular $H^*$-module.
\end{proposition}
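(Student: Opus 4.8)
I would prove this by identifying $\frac{1}{\dim(H)}\chi_{H^*}$ with the unique normalized two-sided integral of $H$. Write $t \in H$ for the element corresponding to $\chi_{H^*}$ under $H \cong H^{**}$, i.e.\ $\langle f, t\rangle = \chi_{H^*}(f)$ for all $f \in H^*$. Since $H$ is semisimple it is unimodular, so its space of two-sided integrals is one-dimensional and the counit is nonzero on it; the Haar integral $\ell$ is the unique two-sided integral with $\eps(\ell)=1$. Hence it suffices to establish two things: (i) $\eps(t)=\dim(H)$, and (ii) $t$ is a two-sided integral in $H$. Together with uniqueness these give $\frac{1}{\dim(H)}\chi_{H^*}=\ell$.

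Claim (i) is immediate: since $\eps=\eps_H$ is the unit $1_{H^*}$ of $H^*$, we have $\eps(t)=\langle \eps_H, t\rangle = \chi_{H^*}(1_{H^*}) = \operatorname{tr}(\operatorname{id}_{H^*}) = \dim(H^*) = \dim(H)$, where I use that $\dim(H)\neq 0$ in characteristic zero to make the normalization meaningful.

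The real work is claim (ii), which I expect to be the main obstacle. To show $h\cdot t = \eps(h)\,t$, I would pair against an arbitrary $f \in H^*$. Using that the coproduct of $H^*$ is dual to the product of $H$, one gets $\langle f, h t\rangle = \langle f_{(1)}, h\rangle \langle f_{(2)}, t\rangle = \chi_{H^*}(h \rightharpoonup f)$, where $(h\rightharpoonup f)(b)=f(hb)$, so the left-integral condition on $t$ becomes the trace identity
\begin{equation*}
\operatorname{tr}\!\big(L_{h \rightharpoonup f}\big) = \eps(h)\,\operatorname{tr}(L_f) \qquad (f\in H^*,\ h \in H),
\end{equation*}
with $L_g$ left multiplication by $g$ on $H^*$. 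This is exactly the assertion that the regular character $\chi_{H^*}$ is a (right) integral in $H=(H^*)^*$, and the essential subtlety is that it genuinely uses semisimplicity: the same computation for Sweedler's algebra $H_4$ shows the regular character is \emph{not} an integral, so no purely formal Hopf-algebraic manipulation can succeed.

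To exploit semisimplicity I would use that $H^*$ is semisimple as well (Larson--Radford, in characteristic zero), so that $H^*$ is a \emph{symmetric} Frobenius algebra with Frobenius form $(f,g)\mapsto \langle fg, \ell\rangle$, the Nakayama automorphism being trivial because $S^2=\operatorname{id}$. Computing $\operatorname{tr}(L_f)$ through a pair of dual bases for this form then collapses to $\dim(H)\,\langle f, \ell\rangle$, which is precisely the pairing form of the proposition and simultaneously yields the displayed trace identity; equivalently, one may decompose $\chi_{H^*}=\sum_j \dim(V_j)\,\chi_{V_j}$ by Artin--Wedderburn and match it with the block-idempotent structure of the semisimple algebra $H^*$. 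The analogous computation on the right (or invoking unimodularity of the semisimple $H$) shows $t$ is also a right integral, so $t$ is two-sided; combined with (i) and uniqueness of the Haar integral this completes the proof.
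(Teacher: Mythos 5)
First, a point of reference: the paper does not prove Proposition \ref{prop:haar-integral-given-by-character} at all --- it is quoted from \cite{larsonRadford}, Prop.~1~b) --- so there is no in-paper argument to compare yours with; I can only judge your proposal on its own merits. Your reduction is correct as far as it goes: semisimplicity gives a unique two-sided integral with $\eps(\ell)=1$, the computation $\eps(t)=\dim(H)$ is right, and the reformulation of ``$t$ is a left integral'' as the trace identity $\operatorname{tr}(L_{h\rightharpoonup f})=\eps(h)\operatorname{tr}(L_f)$ is correct. But note that this reduction is ultimately redundant: the identity you claim the dual-basis computation yields, $\operatorname{tr}(L_f)=\dim(H)\,f(\ell)$ for all $f\in H^*$, \emph{is} the proposition verbatim (pair $\chi_{H^*}=\dim(H)\ell$ with $f$), so the whole proof lives or dies with that single computation.

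And that is where there is a genuine gap: the ``collapse'' is asserted, never performed, and it is not routine Frobenius bookkeeping. Concretely, dual bases for the form $(f,g)\mapsto (fg)(\ell)$ are governed, through the Larson--Sweedler bijection $T\colon f\mapsto f(\ell_{(1)})\ell_{(2)}$, by $\Delta(\ell)$ and a dual integral $\lambda\in H^*$ normalized by $\lambda(\ell)=1$; one finds $T^{-1}(v)=\lambda(S(v)\,\cdot\,)$ using the shift identity $h\ell_{(1)}\otimes\ell_{(2)}=\ell_{(1)}\otimes S^{-1}(h)\ell_{(2)}$ valid for any left integral, and then the trace through these dual bases is
\[ \operatorname{tr}(L_f) \;=\; f(\ell_{(1)})\,\lambda\bigl(S(\ell_{(3)})\,\ell_{(2)}\bigr) . \]
To finish one must turn $S(\ell_{(3)})\ell_{(2)}$ into $\eps(\ell_{(2)})1$, giving $\operatorname{tr}(L_f)=\lambda(1)f(\ell)$ and then $\lambda(1)=\dim(H)$ by evaluating at $f=\eps$. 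But $S(u_{(2)})u_{(1)}=\eps(u)1$ is \emph{not} a Hopf-algebra axiom --- the axiom-level identity is $S^{-1}(u_{(2)})u_{(1)}=\eps(u)1$ --- so this is exactly the point where $S^2=\operatorname{id}$ (Larson--Radford) must enter; for Sweedler's $H_4$ the two expressions genuinely differ, which is why the statement fails there. Your one-line appeal to ``the Nakayama automorphism is trivial because $S^2=\operatorname{id}$'' does not do this work, and in fact aims at the wrong feature: the symmetry of your form is equivalent to cocommutativity of $\ell$ (itself a nontrivial fact, usually derived from $S^2=\operatorname{id}$ plus unimodularity of both $H$ and $H^*$ via Radford's formula), yet the computation above never uses symmetry --- what it needs is the explicit Hopf-theoretic dual bases and $S^2=\operatorname{id}$ at one precise spot. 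The same criticism applies to your ``equivalent'' Artin--Wedderburn route, which again just restates the goal. A secondary concern: you invoke semisimplicity of $H^*$ and $S^2=\operatorname{id}$, which are the main theorems of the very paper in which this statement is Proposition~1, proved there \emph{before} and independently of them; your route is non-circular only if one takes the Larson--Radford theorems as black boxes with independent proofs. So: viable strategy, correct framing, but the mathematical core of the proposition is left as an assertion.
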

This proposition motivates us to consider, for our purposes, the character of the regular $H^*$-module as the appropriate generalization of the Haar integral to the non-semisimple case:
\begin{equation}
\ph := \frac{1}{\dim(H)} \chi_{H^*} \in H^{**} \cong H.
\end{equation}
Indeed,
\begin{itemize}
\item
We still have that $\ph$ is an idempotent:
\begin{align*}
\ph^2 &= \frac{1}{\dim(H)^2}\chi_{H^*}\chi_{H^*}
= \frac{1}{\dim(H)^2}\chi_{H^* \otimes H^*}
= \frac{1}{\dim(H)^2}\chi_{H^* \otimes H^*_\text{triv}} \\
&= \frac{1}{\dim(H)^2}\chi_{H^*}\chi_{H^*_\text{triv}}
= \frac{1}{\dim(H)^2}\chi_{H^*}\dim(H)
= \frac{1}{\dim(H)}\chi_{H^*}
= \ph,
\end{align*}
where we have used the isomorphism of $H^*$-modules $H^* \otimes H^* \to H^* \otimes H^*_\text{triv}, f \otimes g \mapsto f_{(1)} \otimes S(f_{(2)}).g$.
\item
Another basic property of $\ph$ is that $\eps(\ph) = 1$, since $\eps(\chi_{H^*}) = \chi_{H^*}(\eps) = \dim(H)$.
\item
Moreover, the cyclicity of the trace implies that $\ph \in H$ is cocommutative.
\item
A difference from the semisimple case is that $\ph$ is in general not central when $H$ is not semisimple.
But, due to Proposition \ref{prop:characterization-of-semisimplicity}, this is exactly what we expect.
\end{itemize}

\begin{example} \label{exa:sweedlers-hopf-algebra}
Let us consider as a non-semisimple example where the element $\ph$ is not equal to any integral, the four-dimensional Sweedler Hopf algebra (cf. Example \ref{exa:sweedlers-algebra})
\[ H_4 = \kk\langle g,x\rangle / (g^2=1, x^2=0, gx=-xg) \]
The co-multiplication is given by $\Delta (g) = g \ot g$ and $\Delta (x) = x \ot 1 + g \ot x$.

As can be straightforwardly verified, the space of left integrals of $H_4$ is $I_\ell = \kk (1 + g)x$ and the space of right integrals is $\kk x(1 + g)$.
However, $\ph \in H_4$ is equal to $\frac{1}{2}(1 + g)$.

For later reference, let us list the simple modules of Sweedler's Hopf algebra $H_4$.
The Jacobson radical $J$ of $H_4$ is $\kk \{ x, gx \} \subseteq H_4$ and the maximal semisimple quotient $H_4 / J$ of $H_4$ can then be identified with the group algebra $\kk \{1,g\}$ of the cyclic group of order $2$.
Hence, there are two non-isomorphic simple $H_4$-modules, both one-dimensional: the trivial one, denoted $\kk_+$, sending $g \mapsto 1$ and $x \mapsto 0$ and the other one, denoted $\kk_-$, mapping $g \mapsto -1$ and $x \mapsto 0$.

Let us see what the character-projector formula \eqref{eq:character-projector-formula} would give us in this example if we replace the Haar integral $\ell$ in the formula by the idempotent $\ph$.
For the trivial module, we of course get $\ph = \frac{1}{2}(1 + g)$ itself and for the non-trivial simple $H_4$-module we obtain $\frac{1}{2}(1-g)$.
These are indeed orthogonal and both idempotent.
\end{example}

The Hopf algebra $H_4$ in the previous example has only one-dimensional simple modules, so in particular satisfies the so-called Chevalley property.
For the remainder of this paper, we will restrict our attention to the better behaved subclass of Hopf algebras with the Chevalley property.

\begin{definition} \label{def:chevalley}
Let $H$ be a Hopf algebra.
$H$ has the \emph{Chevalley property} if the tensor product of any two semisimple $H$-modules is again a semisimple $H$-module.
\end{definition}
\begin{remark} \label{def:basic}
A Hopf algebra $H$, all of whose simple modules are one-dimensional, clearly possesses the Chevalley property.
We call such an algebra \emph{basic}.
\end{remark}

Another characterization of the Chevalley property is the following (cf. \cite[Proposition~4.2]{andEtGel}):
\begin{lemma}
$H$ has the Chevalley property if and only if its Jacobson radical $J$ is a Hopf ideal in $H$, that is $J$ is an ideal such that $\Delta (J) \subseteq J \otimes H + H \otimes J$, $\eps(J)=0$ and $S(J) \subseteq J$, or in other words, $H/J$ has the structure of a Hopf algebra such that the quotient map $\pi : H \lto H/J$ is a Hopf algebra morphism.
\end{lemma}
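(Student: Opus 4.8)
The plan is to treat the three defining conditions of a Hopf ideal separately and to observe that two of them hold unconditionally, so that the whole content of the equivalence is concentrated in the coproduct condition. First I would note that $\eps(J)=0$ for any finite-dimensional Hopf algebra: the trivial module $\II$ is one-dimensional, hence simple, and the radical $J$ annihilates every simple module, so $j.1 = \eps(j) 1 = 0$ for all $j \in J$. Likewise $S(J)=J$ holds with no appeal to the Chevalley property, since the antipode is a bijective algebra anti-automorphism $S : H \lto H^\opp$ and the Jacobson radical is preserved by algebra (anti-)isomorphisms, with $J(H^\opp)=J(H)$ as a subset. Thus everything reduces to the inclusion $\Delta(J) \subseteq J \ot H + H \ot J$; once this is established, $J$ is a bi-ideal stable under $S$, i.e.\ a Hopf ideal, and the reformulation ``$H/J$ is a Hopf algebra with $\pi$ a Hopf morphism'' is the standard correspondence between Hopf ideals and Hopf-algebra quotients.

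For the direction asserting that $\Delta(J) \subseteq J \ot H + H \ot J$ implies the Chevalley property, I would use that a module is semisimple precisely when $J$ acts as zero on it. Given semisimple modules $M$ and $N$ and $j \in J$, the action on the tensor product is $j.(m \ot n) = j_{(1)}.m \ot j_{(2)}.n$; writing $\Delta(j)$ as a sum of terms lying in $J \ot H$ or in $H \ot J$, each summand is killed by either $J.M=0$ or $J.N=0$. Hence $J.(M \ot N)=0$ and $M \ot N$ is semisimple.

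The converse is where the real work lies, and I expect it to be the main obstacle. I would record the identity $J \ot H + H \ot J = \ker(\pi \ot \pi)$, so that the desired inclusion is equivalent to the vanishing on $J$ of the composite $\bar\Delta := (\pi \ot \pi)\circ\Delta : H \lto (H/J) \ot (H/J)$. The point is that for semisimple $M, N$ — equivalently $H/J$-modules — the $H$-action on $M \ot N$ factors through $\bar\Delta$, so the Chevalley hypothesis says that $\bar\Delta(J)$ annihilates every $M \ot N$ with $M, N$ semisimple, in particular with $M, N$ simple $H/J$-modules. The decisive input is that, over the algebraically closed field $\kk$, the algebra $(H/J) \ot (H/J)$ is again semisimple and its simple modules are exactly the outer tensor products $S \ot T$ of simple $H/J$-modules $S, T$. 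Therefore the modules $M \ot N$ exhaust all simple $(H/J)\ot(H/J)$-modules, and an element of a semisimple algebra annihilating every simple module must be zero. This forces $\bar\Delta(J)=0$, i.e.\ $\Delta(J) \subseteq J \ot H + H \ot J$, completing the equivalence. The subtlety to watch is precisely this use of algebraic closedness, which guarantees both the semisimplicity of $(H/J)^{\ot 2}$ and the identification of its simple modules with the $S \ot T$.
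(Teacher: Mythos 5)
Your proposal is correct, and all three reductions are sound: $\eps(J)=0$ and $S(J)=J$ do hold unconditionally (the latter because $S$ is bijective for finite-dimensional $H$ and $J(H^\opp)=J(H)$ as subsets), the identification $J\ot H + H\ot J=\ker(\pi\ot\pi)$ is the right way to phrase the coideal condition, and the converse direction correctly exploits that over the algebraically closed field $\kk$ of characteristic zero the algebra $(H/J)\ot(H/J)$ is semisimple with simple modules exactly the outer products $S\boxtimes T$, so that an element annihilating all of them must vanish. Note, however, that the paper itself offers no proof to compare against: the lemma is quoted with a reference to \cite[Proposition~4.2]{andEtGel}. What you have written is a complete, self-contained proof of the cited fact, along the standard lines of that reference: one direction is the easy observation that $J$ kills tensor products of $J$-trivial modules, and the other concentrates the content in the statement that $\bar\Delta(J)$ lies in the radical of the semisimple algebra $(H/J)^{\ot 2}$, which is zero. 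You are also right to flag algebraic closedness as the place where care is needed; in positive characteristic over a non-perfect field the semisimplicity of $(H/J)^{\ot 2}$ (equivalently, separability of $H/J$) is exactly what can fail, while in characteristic zero it is automatic.
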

The class of Hopf algebras with the Chevalley property in particular includes the semisimple Hopf algebras, but is much larger than that.
In fact, many known examples of finite-dimensional Hopf algebras over $\kk$ have either the Chevalley property or a dual Hopf algebra with the Chevalley property.
The latter case includes in particular the pointed Hopf algebras that have been much studied by N. Andruskiewitsch, H.-J. Schneider et al.
For example the pointed Hopf algebras with abelian group of group-like elements were classified in \cite{AS10} (under a mild restriction on the order of the group).
Hopf algebras with the Chevalley property were also studied in \cite{andEtGel, andGM, andVay}.

In view of Proposition \ref{prop:haar-integral-given-by-character} and the observations following it, we conjecture the following character-projector formula for non-semisimple Hopf algebras:
\begin{conjecture} \label{conjecture}
Let $H$ be a finite-dimensional Hopf algebra over $\kk$ with the Chevalley property (cf. Def. \ref{def:chevalley}).
Then we conjecture that the elements
\begin{equation} \label{eq:character-projector-formula-nonssi}
\phs{i} := \dim(S_i) \chi_i(S(\ph_{(1)})) \ph_{(2)} \in H,
\end{equation}
for $i \in I$, where $I$ is the set of isomorphism classes of simple $H$-modules, define a set of orthogonal idempotents of $H$ such that $H = \bigoplus_{i\in I} H \phs{i}$ is an isotypic decomposition.
\end{conjecture}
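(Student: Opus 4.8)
The plan is to reduce everything to the semisimple quotient $H/J$ and to control the nilpotent corrections coming from the radical $J$. By the Chevalley property the radical $J$ is a Hopf ideal, so $H/J$ is a semisimple Hopf algebra and $\pi\colon H\to H/J$ is a morphism of Hopf algebras; let $(e_i)_{i\in I}$ be the primitive central idempotents of $H/J$. By Lemma~\ref{lem:isotypic-decomposition-as-a-lift-of-the-center} (and its proof) it suffices to show that the $\phs{i}$ are orthogonal idempotents with $\sum_i\phs{i}=1$ and $\pi(\phs{i})=e_i$: the last condition guarantees that $H\phs{i}$ has the correct isomorphism type, so that $e_i\mapsto\phs{i}$ is the algebra section of $\pi$ realising the desired isotypic decomposition.

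First I would prove the compatibility $\pi(\phs{i})=e_i$. Since $J$ acts by zero on every semisimple module, each irreducible character factors as $\chi_i=\bar\chi_i\circ\pi$, and the antipode is compatible with $\pi$. As $\pi$ is also a coalgebra map, applying it to \eqref{eq:character-projector-formula-nonssi} gives $\pi(\phs{i})=\dim(S_i)\,\bar\chi_i\!\big(S(\pi(\ph)_{(1)})\big)\,\pi(\ph)_{(2)}$, which is precisely the semisimple formula \eqref{eq:character-projector-formula} for $H/J$ evaluated at $\pi(\ph)$. It remains to identify $\pi(\ph)$ with the Haar integral of $H/J$. For this I would compute, for $\bar f\in(H/J)^*$, the pairing $\langle\pi(\ph),\bar f\rangle=\tfrac{1}{\dim H}\operatorname{tr}_{H^*}(L_{\pi^*\bar f})$, where $L_{\pi^*\bar f}$ is left multiplication on $H^*$ by $\pi^*\bar f$. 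The dual map $\pi^*$ embeds $(H/J)^*$ as a Hopf subalgebra $K\subseteq H^*$, and by the Nichols--Zoeller theorem $H^*$ is free as a left $K$-module of rank $r=\dim H/\dim(H/J)$; hence $\operatorname{tr}_{H^*}(L_{\pi^*\bar f})=r\,\operatorname{tr}_{(H/J)^*}(L_{\bar f})$, giving $\langle\pi(\ph),\bar f\rangle=\tfrac{1}{\dim(H/J)}\chi_{(H/J)^*}(\bar f)$. Thus $\pi(\ph)=\tfrac{1}{\dim(H/J)}\chi_{(H/J)^*}$, which by Proposition~\ref{prop:haar-integral-given-by-character} applied to the semisimple $H/J$ is its Haar integral, and the semisimple formula yields $\pi(\phs{i})=e_i$.

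Granting idempotency and orthogonality, the identity $\pi(\phs{i})=e_i$ shows that $H\phs{i}$ is projective with top $H\phs{i}/JH\phs{i}\cong(H/J)e_i\cong S_i^{\oplus\dim S_i}$, hence an $i$-isotypic component, so that Proposition~\ref{prop:isotypic_decomposition} forces $\bigoplus_i H\phs{i}\cong H$. The remaining, genuinely hard, point is to establish the \emph{exact} relations $\phs{i}\phs{j}=\delta_{ij}\phs{i}$ and $\sum_i\phs{i}=1$ in $H$ itself. From $\pi(\phs{i})=e_i$ these already hold modulo $J$, so the whole difficulty is to show that the nilpotent corrections vanish; note that having the $H\phs{i}$ of the right type and the $\phs{i}$ idempotent does \emph{not} by itself pin down these particular idempotents, since an idempotent generating a given direct summand is only determined up to a square-zero correction in the radical.

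Expanding $\phs{i}\phs{j}$ and attempting to simplify via $\Delta(\ph)=\Delta(\ph)^2$ (from $\ph^2=\ph$), the cocommutativity of $\ph$, and the trace identity $\chi_i(ab)=\chi_i(ba)$, one sees that the argument available in the semisimple case breaks down precisely because $\ph$ is \emph{not} a two-sided integral once $H$ is non-semisimple (Proposition~\ref{prop:characterization-of-semisimplicity}): the character-orthogonality manipulations that collapse the analogous expression to $\delta_{ij}$ there rely on the integral property. I expect this to be the main obstacle, and in particular I expect completeness $\sum_i\phs{i}=1$ to be the most delicate part; writing $\sum_i\dim(S_i)\chi_i=\pi^*\!\big(\chi_{\reg}^{H/J}\big)$ shows that $\sum_i\phs{i}=\big(\pi^*\chi_{\reg}^{H/J}\big)(S(\ph_{(1)}))\ph_{(2)}$ maps to $1$ in $H/J$, and one must control its $J$-component. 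This is exactly why the statement is only conjectural in general and is settled only under extra hypotheses forcing $\ph$ to behave enough like an integral: the Hecke-algebra condition of Definition~\ref{def:hecke-algebra} (Theorem~\ref{thm:the-phs-sum-up-to-one}, and Corollary~\ref{corollary} in the basic case) and the dual Chevalley property (Proposition~\ref{prop:chevalley-and-cochevalley}).
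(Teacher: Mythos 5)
Your proposal is essentially correct, and correct in the most important way: it recognizes that the statement is a \emph{conjecture} which the paper itself does not prove in general, and it does not manufacture a spurious proof. Moreover, the parts you do carry out coincide with the paper's own partial results. Your computation of $\pi(\ph)$ via Nichols--Zoeller (freeness of $H^*$ over $(H/J)^*$ of rank $\dim H/\dim(H/J)$) is exactly the paper's Lemma \ref{lem:haarintegralofH/J}, and your conclusion $\pi(\phs{i})=e_i$ together with the projective-cover argument for $H\phs{i}$ is the content of Theorem \ref{thm:idempotents_for_one-dimensional_simples}; your diagnosis that the genuine difficulty is the exact relations in $H$ (not modulo $J$), and that this is settled only under the Hecke-algebra hypothesis or the dual Chevalley property, matches Theorems \ref{thm:the-phs-sum-up-to-one}, Corollary \ref{corollary} and Proposition \ref{prop:chevalley-and-cochevalley}. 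Two refinements relative to the paper's treatment are worth noting. First, you treat orthogonality and completeness $\sum_i\phs{i}=1$ as two separate hard problems, but by Lemma \ref{lem:partitionofunityimpliesorthogonality} orthogonality is automatic once one has the direct sum decomposition $H=\bigoplus_i H\phs{i}$ together with $\sum_i\phs{i}=1$; so completeness is the single essential obstruction, which is why Theorem \ref{thm:the-phs-sum-up-to-one} is stated as an if-and-only-if for that equation alone. Second, you ``grant'' idempotency, but for one-dimensional simples it is immediate and you need not grant it: if $\chi$ is an algebra map then so is $\chi\circ S$ (the antipode is an anti-algebra map and $\kk$ is commutative), hence $h\mapsto\chi(S(h_{(1)}))h_{(2)}$ is an algebra endomorphism of $H$ and carries the idempotent $\ph$ to an idempotent --- this is how the paper proves Theorem \ref{thm:idempotents_for_one-dimensional_simples}, and it is the step that genuinely fails for higher-dimensional simples, as the factor $\dim(S_i)$ in Example \ref{exa:non-chevalley} makes explicit.
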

\begin{remark}
When $H$ has the Chevalley property, the quotient map $\pi : H\lto H/J$ sends $\{ \phs{i} \}_{i \in I}$ to the canonical basis of central orthogonal idempotents of the center of the semisimple algebra $H/J$.
The restriction of the quotient map to the subalgebra $\kk\langle \phs{i}\rangle \subseteq H$ generated by $\{ \phs{i} \}_{i \in I}$ then gives us a surjection $$\pi : \kk\langle \phs{i}\rangle \lto Z(H/J).$$ onto a semisimple algebra.
It is known that any such surjection splits as an algebra map, and if $s:Z(H/J)\to \kk\langle \phs{i}\rangle$ is a section, then $s\pi(\phs{i})$ will give us a basis of orthogonal idempotents which are moreover non-commutative polynomials in the elements $\phs{i}$.
However, our conjecture asks if the character-projector formula generalises directly, without any adjustments.
\end{remark}

In order to justify our assumption of the Chevalley property for this conjecture, we will give a counter-example: 

\begin{example} \label{exa:non-chevalley}
Let us give a counter-example to the above conjecture for a Hopf algebra that does not possess the Chevalley property.

Let $\mu \in \kk$.
Then we define an $8$-dimensional Hopf algebra $H(\mu)$ over $\kk$, a deformation of the so-called \emph{double cover} of Sweedler's $4$-dimensional Hopf algebra, as follows.
As an algebra it is generated by elements $g$ and $x$ with the relations
\begin{align*}
	g^4 &= 1 \\
	g x g^{-1} &= -x \\
	x^2 &= \frac{\mu}{2}  (1 - g^2) ,
\end{align*}
and the co-multiplication is given by $\Delta (g) = g \ot g$ and $\Delta (x) = x \ot 1 + g \ot x$.
From this one can compute that $\ph = \frac{1}{\dim(H_\mu)} \chi_{H(\mu)^*} = \frac{1}{4}(1+g+g^2+g^3)$.
Let us compute the remaining $\phs{i}$ from the character-projector formula \eqref{eq:character-projector-formula-nonssi}:

To this end we have to determine the simple $H(\mu)$-modules and compute their characters.
Since $\frac{1+g^2}{2}$ and $\frac{1-g^2}{2}$ are central orthogonal idempotents, $H(\mu)$ decomposes as a direct sum of algebras
\[ H(\mu) = H(\mu) \frac{1+g^2}{2} \oplus H(\mu) \frac{1-g^2}{2} =: H(\mu)^+ \oplus H(\mu)^- . \]
Denoting $\gamma := g (\frac{1+g^2}{2})$ and $\xi := x (\frac{1+g^2}{2})$, as an algebra $H(\mu)^+$ has the unit $1_+ := \frac{1+g^2}{2}$ and is generated by $\gamma$ and $\xi$ satisfying the relations
\begin{align*}
	\gamma^2 &= 1_+, \\
	\gamma \xi \gamma^{-1} &= -\xi, \\
	\xi^2 &= 0 ,
\end{align*}
and hence is isomorphic to the four-dimensional Sweedler algebra $H_4$.

On the other hand, denoting $G := g (\frac{1-g^2}{2})$ and $X := x (\frac{1-g^2}{2})$, as an algebra $H(\mu)^-$ has the unit $1_- := \frac{1-g^2}{2}$ and is generated by $G$ and $X$ satisfying the relations
\begin{align*}
	G^2 &= -1_-, \\
	G X G^{-1} &= -X, \\
	X^2 &= \mu 1_-
\end{align*}
Now we have to distinguish the two cases $\mu = 0$ and $\mu \neq 0$.

Let us first assume that $\mu = 0$.
Then setting $\tild{G} := i G$, the elements $\tild{G}$ and $X$ satisfy the relations of the four-dimensional Sweedler algebra $H_4$, so that we have that, as an algebra $H(0)$, is isomorphic to the direct sum of two Sweedler algebras: $H(0) \cong H_4 \oplus H_4$.
We have seen in Example \ref{exa:sweedlers-hopf-algebra} that Sweedler's Hopf algebra has two one-dimensional simple modules $\kk_+$ and $\kk_-$, so that $H(0)$ therefore has four one-dimensional simple modules: $\kk^+_+$ and $\kk^+_-$ for the $H(0)^+$-part and $\kk^-_+$ and $\kk^-_-$ for the $H(0)^-$-part.
The corresponding orthogonal idempotents $\phs{i}$ according to the character-projector formula \eqref{eq:character-projector-formula-nonssi} are $\ph = \phs{\kk^+_+} = \frac{1}{4}(1+g+g^2+g^3)$ and $\phs{\kk^+_-} = \frac{1}{4}(1-g+g^2-g^3)$ for the two simple modules of $H(0)^+ \cong H_4$, and $\phs{\kk^-_+} = \frac{1}{4}(1+ig-g^2-ig^3)$ and $\phs{\kk^-_-} = \frac{1}{4}(1-ig-g^2+ig^3)$ for the two simple modules of $H(0)^- \cong H_4$.

Now consider the case $\mu \neq 0$.
Then it turns out that the algebra $H(\mu)^-$ is isomorphic to the matrix algebra $\Mat{2}{\kk}$, by identifying $G$ with $\begin{pmatrix} i & 0 \\ 0 & -i \end{pmatrix}$ and $X$ with $\begin{pmatrix} 0 & \mu \\ 1 & 0 \end{pmatrix}$.
The matrix algebra has only one simple module, the standard two-dimensional one, which we shall denote by $V$, even as a module over $H(\mu)$.
Its character $\chi_V$ maps $G \mapsto 0$, $G^2 \mapsto -2$ and $G^3 \mapsto 0$.
In total then, $H(\mu)$ has three simple modules $\kk^+_+$, $\kk^+_-$ and $V$ and the corresponding $\phs{i}$ of the character-projector formula \eqref{eq:character-projector-formula-nonssi} are $\ph = \frac{1}{4}(1+g+g^2+g^3)$ and $\phs{\kk^+_-} = \frac{1}{4}(1-g+g^2-g^3)$ for the two simple modules of $H(\mu)^+ \cong H_4$, and $\phs{V} = 2 \cdot \frac{1}{4}(2 - 2g^2) = 1 - g^2$ for the unique simple module of $H(\mu)^- \cong \Mat{2}{\kk}$.
Here we see now that $\phs{V}^2 = (1 - g^2)^2 = 1 - 2g^2 + g^4 = 2(1 - g^2)$ is not an idempotent.
The reason for this failure to be idempotent is, one could argue, the factor $2$ in $\phs{V} = 2 \cdot \frac{1}{4}(2 - 2g^2)$, which comes from the factor $\dim(V)$ in the character-projector formula $\phs{V} := \dim(V) \chi_V(S(\ph_{(1)})) \ph_{(2)}$.

Indeed, if we set $\mu = 0$, then $V$ is not a simple $H(\mu)$-module anymore, but rather fits in a short exact sequence
$ 0 \to \kk^-_- \to V \to \kk^-_+ \to 0$,
so that the character splits into $\chi_V = \chi_{\kk^-_-} + \chi_{\kk^-_+}$.
Therefore, for $\mu = 0$, $\phs{V}$ becomes $\dim(V) (S(\chi_{\kk^-_-} + \chi_{\kk^-_+})\ot \id{}) (\Delta(\ph)) = \dim(V) (\phs{\kk^-_-} + \phs{\kk^-_+})$.
The sum of orthogonal idempotents $\phs{\kk^-_-}$ and $\phs{\kk^-_+}$ is of course again an idempotent, so here we do not get an idempotent precisely because of the factor $\dim(V) = 2$.
\end{example}

\subsection{Idempotence of the conjectured idempotents} \label{subsec:idempotence-of-conjectured-idempotents}

Making use the Chevalley property we will now arrive at further results concerning the idempotent $\ph \in H$ and isotypic decompositions of $H$, towards proving our Conjecture \ref{conjecture}.

We reiterate that if $H$ has the Chevalley property, then $H/J$ is a semisimple Hopf algebra and the quotient map $\pi : H \lto H/J$ is a surjective morphism of Hopf algebras.
We want to determine the image $\pi(\ph)$ of the idempotent $\ph \in H$ under this surjection.

\begin{lemma} \label{lem:haarintegralofH/J}
The image of the element $\ph \in H$ under the quotient map $\pi : H \to H/J$ is $p_{H/J} := \frac{1}{\dim(H/J)} \chi_{(H/J)^*} \in H/J$, and hence by Proposition \ref{prop:haar-integral-given-by-character} the Haar integral  of the semisimple Hopf algebra $H/J$.
\end{lemma}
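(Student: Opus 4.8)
The plan is to dualise the statement and reduce it to a single trace identity, which is then settled by a freeness theorem. Recall that the Chevalley property makes $J$ a Hopf ideal, so $\pi\colon H\to H/J$ is a surjective morphism of Hopf algebras; its transpose $\pi^*\colon (H/J)^*\to H^*$ is therefore an injective morphism of Hopf algebras and identifies $(H/J)^*$ with the Hopf subalgebra $J^{\perp}=\{f\in H^*\mid f|_J=0\}$ of $H^*$. Under the canonical identifications $H\cong H^{**}$ and $H/J\cong (H/J)^{**}$, the element $\ph\in H$ is the functional $f\mapsto \tfrac{1}{\dim(H)}\chi_{H^*}(f)=\tfrac{1}{\dim(H)}\operatorname{tr}(L_f)$ on $H^*$, where $L_f$ denotes left multiplication by $f$ in $H^*$. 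Evaluating $\pi(\ph)\in (H/J)^{**}$ against an arbitrary $\bar f\in(H/J)^*$ and using $\bar f\circ\pi=\pi^*(\bar f)$ gives
\[
\bar f\big(\pi(\ph)\big)=\big(\pi^*(\bar f)\big)(\ph)=\frac{1}{\dim(H)}\operatorname{tr}\!\big(L_{\pi^*(\bar f)}\big),
\]
the trace being taken over $H^*$. Since $p_{H/J}$ is by definition the functional $\bar f\mapsto \tfrac{1}{\dim(H/J)}\operatorname{tr}(L_{\bar f})$ on $(H/J)^*$ (the trace now over $(H/J)^*$), proving the lemma amounts to the trace identity
\[
\operatorname{tr}_{H^*}\!\big(L_{\pi^*(\bar f)}\big)=\frac{\dim(H)}{\dim(H/J)}\,\operatorname{tr}_{(H/J)^*}\!\big(L_{\bar f}\big)
\qquad\text{for all }\bar f\in (H/J)^*.
\]

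The key step is to read the left-hand side as the trace of the action of $\bar f$ on $H^*$, viewed as a left module over the Hopf subalgebra $B:=\pi^*\big((H/J)^*\big)\cong (H/J)^*$ through left multiplication. Here I would invoke the Nichols--Zoeller freeness theorem: a finite-dimensional Hopf algebra is free as a left module over any Hopf subalgebra. Applied to $B\subseteq H^*$, it yields an isomorphism of left $B$-modules $H^*\cong B^{\oplus r}$ of rank $r=\dim(H^*)/\dim(B)=\dim(H)/\dim(H/J)$. With respect to such a decomposition the operator $L_{\pi^*(\bar f)}$ is block-diagonal, each block being left multiplication by $\pi^*(\bar f)$ on the regular module $B$; hence $\operatorname{tr}_{H^*}(L_{\pi^*(\bar f)})=r\,\operatorname{tr}_{B}(L_{\pi^*(\bar f)})=r\,\chi_B(\pi^*(\bar f))$. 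As $\pi^*$ is an algebra isomorphism onto $B$, the left-multiplication operators are conjugate and the regular character is carried across, $\chi_B(\pi^*(\bar f))=\chi_{(H/J)^*}(\bar f)=\operatorname{tr}_{(H/J)^*}(L_{\bar f})$; substituting $r=\dim(H)/\dim(H/J)$ gives exactly the desired identity.

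Combining the two paragraphs, $\bar f(\pi(\ph))=\tfrac{1}{\dim(H/J)}\chi_{(H/J)^*}(\bar f)=\bar f(p_{H/J})$ for every $\bar f$, whence $\pi(\ph)=p_{H/J}$, and the second assertion follows from Proposition \ref{prop:haar-integral-given-by-character} applied to the semisimple Hopf algebra $H/J$. The only genuine input beyond bookkeeping with the dualities is the freeness of $H^*$ over the Hopf subalgebra $(H/J)^*$; this is where the main work sits, and it is also the step that crucially uses the Chevalley property, since without it $J$ need not be a Hopf ideal and $(H/J)^*$ would not be a Hopf subalgebra of $H^*$. A more computational alternative would be to write $\chi_{H^*}=\sum_j h^j(h_{j(2)})\,h_{j(1)}$ for dual bases $\{h_j\},\{h^j\}$ of $H$ and $H^*$ and to track this element through $\pi$, but the module-theoretic argument above is cleaner and makes the role of the Chevalley property transparent.
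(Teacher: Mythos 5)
Your proposal is correct and follows essentially the same route as the paper's proof: both view $\ph$ as an element of $H^{**}$, identify $\pi(\ph)$ with the restriction of this functional to the Hopf subalgebra $(H/J)^*\subseteq H^*$, and invoke the Nichols--Zoeller freeness theorem to obtain $\chi_{H^*}\restrict{(H/J)^*}=N\,\chi_{(H/J)^*}$ with $N=\dim(H)/\dim(H/J)$. Your write-up merely spells out the block-diagonal trace computation and the role of the Chevalley property (making $(H/J)^*$ a Hopf subalgebra) that the paper leaves implicit.
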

\begin{proof}
Considering $\ph$ as an element of $H^{**}$, $\pi(\ph) \in (H/J)^{**}$ is the restriction of $\ph : H^* \to \kk$ to the subalgebra $(H/J)^*$.
According to the Nichols-Zoeller theorem $H^*$ is free as an $(H/J)^*$-module, i.e.: $H^* \cong ((H/J)^*)^{\oplus N}$, for $N \in \NN$ such that $\dim(H)=N \dim(H/J)$.
This implies for the characters: $\chi_{H^*} \restrict{(H/J)^*} = N \chi_{(H/J)^*}$ and thus we have $\pi(\ph) = \frac{1}{\dim(H)} \chi_{H^*} \restrict{(H/J)^*} = \frac{N}{\dim(H)} \chi_{(H/J)^*} = \frac{1}{\dim(H/J)} \chi_{(H/J)^*} = p_{H/J}$.
\end{proof}

From this follows the main result of this subsection:

\begin{theorem} \label{thm:idempotents_for_one-dimensional_simples}
Let $H$ be a Hopf algebra with the Chevalley property and let $\chi : H \to \kk$ be the character of a non-zero one-dimensional (hence, simple) $H$-module.
Then the element $\phs{\chi} = (S(\chi)\otimes\id{H})(\Delta(p_H)) \in H$ is an idempotent such that $H \phs{\chi} \subseteq H$ is a $\chi$-isotypic component of $H$.
\end{theorem}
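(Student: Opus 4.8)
My plan is to treat the two claims---idempotence of $\phs{\chi}$, and the identification of the isotypic type of $H\phs{\chi}$---separately, exploiting throughout that the character of a one-dimensional module is an \emph{algebra homomorphism} $\chi : H \lto \kk$, i.e.\ a grouplike element of the dual Hopf algebra $H^*$. Then $\psi := S(\chi) = \chi \circ S$ is again grouplike in $H^*$, and the defining expression rewrites as $\phs{\chi} = \psi(\ph_{(1)})\,\ph_{(2)} = \ph \leftharpoonup \psi$, where $a \leftharpoonup f := f(a_{(1)})\,a_{(2)}$ denotes the right hit action of $H^*$ on $H$. (I abbreviate $\ph = p_H$.)

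First I would prove idempotence. Because $\psi$ is an algebra map, a one-line Sweedler computation gives
\[ (ab)\leftharpoonup\psi = \psi(a_{(1)}b_{(1)})\,a_{(2)}b_{(2)} = \psi(a_{(1)})\psi(b_{(1)})\,a_{(2)}b_{(2)} = (a\leftharpoonup\psi)(b\leftharpoonup\psi), \]
and $1\leftharpoonup\psi = \psi(1)\,1 = 1$, so $-\leftharpoonup\psi : H \lto H$ is a \emph{unital algebra endomorphism}. Since $\ph$ is idempotent (the bulleted computation following Proposition~\ref{prop:haar-integral-given-by-character}), this yields at once
\[ \phs{\chi}^2 = (\ph\leftharpoonup\psi)(\ph\leftharpoonup\psi) = \ph^2\leftharpoonup\psi = \ph\leftharpoonup\psi = \phs{\chi}. \]
I would stress that one-dimensionality enters exactly here: for a higher-dimensional simple the character fails to be grouplike and $-\leftharpoonup\psi$ is no longer multiplicative, which is precisely the phenomenon behind Example~\ref{exa:non-chevalley}.

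For the isotypic claim, note that idempotence of $\phs{\chi}$ makes $H\phs{\chi}$ a direct summand of $H$, hence projective, with top $H\phs{\chi}/J\,H\phs{\chi} \cong (H/J)\,\pi(\phs{\chi})$; so it suffices to identify $\pi(\phs{\chi})$. Since $J$ annihilates every simple module, $\chi$ descends to $\chi = \bar\chi\circ\pi$ for a one-dimensional character $\bar\chi$ of $H/J$. Using that the Chevalley property makes $\pi$ a Hopf morphism---so it intertwines both $\Delta$ and $S$---together with Lemma~\ref{lem:haarintegralofH/J}, which gives $\pi(\ph)=p_{H/J}$, I would compute
\[ \pi(\phs{\chi}) = \psi(\ph_{(1)})\,\pi(\ph_{(2)}) = (S(\bar\chi)\ot\id{})(\Delta(p_{H/J})). \]
Because $H/J$ is semisimple with Haar integral $p_{H/J}$, the right-hand side is the classical character-projector idempotent \eqref{eq:character-projector-formula} for $\bar\chi$ (the scalar $\dim(\kk_{\bar\chi})=1$ being trivial), hence the central primitive idempotent $e_{\bar\chi}$ of $H/J$, so that $(H/J)\,\pi(\phs{\chi}) \cong \kk_{\bar\chi} = \kk_\chi$. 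The top of $H\phs{\chi}$ is therefore the one-dimensional simple $\kk_\chi$; a projective module with simple top is indecomposable and is the projective cover of that top, whence $H\phs{\chi}\cong P_\chi$. As $\dim(\kk_\chi)=1$, this $P_\chi = P_\chi^{\oplus 1}$ is by definition the $\chi$-isotypic component.

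The one genuinely delicate step is the identification $\pi(\phs{\chi}) = e_{\bar\chi}$: it rests on $\pi$ being a Hopf morphism so that both $\Delta$ and $S$ descend, and on Lemma~\ref{lem:haarintegralofH/J} turning $\pi(\ph)$ into the Haar integral of the semisimple quotient, which is exactly what permits invoking the semisimple formula \eqref{eq:character-projector-formula}. Everything else is formal once the module-algebra/grouplike viewpoint is in place; in particular idempotence collapses to a one-line consequence of $\ph^2=\ph$.
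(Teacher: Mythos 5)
Your proposal is correct and follows essentially the same route as the paper's proof: idempotence via the fact that $h \mapsto S(\chi)(h_{(1)})\,h_{(2)}$ is an algebra endomorphism applied to the idempotent $\ph$, then identification of $\pi(\phs{\chi})$ with the central idempotent of $H/J$ via Lemma \ref{lem:haarintegralofH/J} and the semisimple character-projector formula, and finally the standard lifting argument showing that $H\phs{\chi}$ is the projective cover of its top $(H/J)\pi(\phs{\chi})$. The only cosmetic difference is in the last step, where you invoke ``projective with simple top is a projective cover'' for the one-dimensional case, while the paper verifies directly that $J(H\tilde{e}_S) = H\tilde{e}_S \cap J$ for a general lifted idempotent; both rest on the same computation.
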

\begin{proof}
Since $\chi : H \to \kk$ is the character of a one-dimensional $H$-module, it is an algebra morphism.
This implies that the element $\phs{\chi} = (S(\chi)\otimes\id{H})(\Delta(p_H)) \in H$ is an idempotent as it is the image of the idempotent $\ph$ under an algebra morphism.
Furthermore $\chi: H \to k$ factors through $H/J$ as $\chi$ corresponds to a simple $H$-module and, hence, $\pi(\phs{\chi}) = \pi((S(\chi)\otimes\id{H})(\Delta(\ph))) = (S(\chi)\otimes\id{H})(\Delta(\pi(\ph))) = (S(\chi)\otimes\id{H})(\Delta(p_{H/J}))$, using Lemma \ref{lem:haarintegralofH/J} in the last step.

Since $p_{H/J}$ is the Haar integral of the semisimple Hopf algebra $H/J$, we thus know that $\pi(\phs{\chi}) \in H/J$ is the central idempotent projecting to the isotypic component of $\chi$ in $H/J$.

To conclude the proof we show that, if $\tild{e_S} \in H$ is an idempotent preimage of the central idempotent $e_S \in H/J$ corresponding to a simple $H$-module (and, hence, also $H/J$-module) $S$, then the submodule $H\tild{e_S} \subseteq H$ is a projective cover of $S^{\oplus \dim S}$.
Indeed, $H \tild{e_S}$ is a projective cover of $H \tild{e_S} / J(H \tild{e_S})$.
Moreover, note that we have the isomorphism of $H$-modules $H \tild{e_S} / J(H \tild{e_S}) \cong (H/J) e_S \cong S^{\oplus\dim(S)}$.
The first isomorphism follows from $J(H\tild{e_S}) = H\tild{e_S} \cap J(H) = \ker(\pi\restrict{H\tild{e_S}})$, where $\pi : H \lto H/J$ denotes the quotient map, together with the fact that $(H/J)e_S$ is the image of the restricted quotient map $\pi\restrict{H\tild{e_S}} : H\tild{e_S} \lto H/J$.
\end{proof}

In this subsection we have shown that, for a one-dimensional simple $H$-module $i \in I$, the conjectured idempotent $\phs{i}$ is indeed an idempotent projecting to an $i$-isotypic component of $H$.
However, we do not yet know whether these idempotents for one-dimensional simple modules are orthogonal to each other.

\subsection{Orthogonality of the conjectured idempotents} \label{subsec:the-phs-sum-up-to-one}

In particular, if we assume that $H$ has only one-dimensional simple modules, i.e. $H$ is \emph{basic},
then we know so far, combining Propositions \ref{prop:isotypic_decomposition} and \ref{thm:idempotents_for_one-dimensional_simples}, that $H = \bigoplus_{i \in I} H \phs{i}$ is an isotypic decomposition for $H$.
However, we do not yet know whether the natural projections of $\bigoplus_{i \in I} H \phs{i}$ onto the direct summands $H \phs{i}$ are the same as the projections given by right multiplication with the idempotents $\phs{i}$.
This is the case if and only if the $\phs{i}$ are orthogonal to each other, i.e. $\phs{i} \phs{j} = \delta_{i,j} \phs{i}$ for all $i, j \in I$.
In this subsection we prove a result which implies in particular for a basic Hopf algebra $H$, using our results from Subsection \ref{subsec:idempotence-of-conjectured-idempotents}, that under a certain additional assumption they are. 

Due to the following lemma, showing that $\sum_{i\in I} \phs{i} = 1$ is sufficient to show that the idempotents $\phs{i}$ are pairwise orthogonal to each other.
\begin{lemma} \label{lem:partitionofunityimpliesorthogonality}
Let $H$ be an algebra with decomposition $H = \bigoplus_i H_i$ into left $H$-submodules $H_i$ and let $p_i \in H_i$ be elements such that $\sum_i p_i = 1$.
Then $p_i p_j = \delta_{i,j} p_i$ for all $i, j$ and $H_i = H p_i$.
\end{lemma}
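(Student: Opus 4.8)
The plan is to exploit the single relation $\sum_i p_i = 1$ together with the fact that each $H_i$ is a left $H$-submodule of $H$, which forces left multiplication to respect the given direct sum decomposition. The key observation is that for any $a \in H$ we may write
\[ a = a \cdot 1 = a \sum_i p_i = \sum_i a p_i, \]
and since $p_i \in H_i$ and $H_i$ is a left $H$-submodule, each summand satisfies $a p_i \in H_i$. Thus $\sum_i a p_i$ is precisely \emph{the} decomposition of $a$ with respect to $H = \bigoplus_i H_i$, and uniqueness of components in a direct sum is the only tool I will need.

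First I would prove orthogonality and idempotence simultaneously by specialising to $a = p_j$. On the one hand, the displayed identity gives $p_j = \sum_i p_j p_i$ with $p_j p_i \in H_i$. On the other hand, since $p_j \in H_j$, the unique decomposition of $p_j$ in $\bigoplus_i H_i$ has $j$-th component equal to $p_j$ and all other components zero. Comparing the two decompositions componentwise and invoking uniqueness yields $p_j p_i = \delta_{i,j}\, p_j$ for all $i,j$, which (after relabelling indices) is exactly the asserted relation $p_i p_j = \delta_{i,j}\, p_i$.

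Next I would establish $H_i = H p_i$. The inclusion $H p_i \subseteq H_i$ is immediate because $H_i$ is a left ideal containing $p_i$. For the reverse inclusion, take an arbitrary $x \in H_i$ and apply the same identity: $x = \sum_j x p_j$ with $x p_j \in H_j$. Since $x$ itself lies in $H_i$, its unique decomposition has $i$-th component $x$ and all others zero, so uniqueness forces $x p_j = 0$ for $j \neq i$ and $x p_i = x$. Hence $x = x p_i \in H p_i$, giving $H_i \subseteq H p_i$ and therefore equality.

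There is no substantial obstacle here; the entire argument rests on the elementary but crucial point that $a p_i \in H_i$, which uses the hypothesis that the $H_i$ are left submodules rather than merely subspaces. The only care required is to phrase the comparison of decompositions correctly, so that uniqueness of direct-sum components is applied to the two expressions for $p_j$ (respectively $x$) rather than assumed as part of the conclusion.
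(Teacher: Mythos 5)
Your proof is correct and follows essentially the same route as the paper's: expand $p_j = p_j\cdot 1 = \sum_i p_j p_i$, note $p_j p_i \in H_i$ by the submodule hypothesis, and compare components via uniqueness in the direct sum, then repeat the same trick with an arbitrary $x \in H_i$ to get $H_i = Hp_i$. No gaps; the argument matches the paper's proof step for step.
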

\begin{proof}
We have for any $i$ that $p_i = p_i 1 = \sum_j p_i p_j$.
Then $p_i \in H_i$ by assumption and $p_i p_j \in H_j$ because $H_j$ is an $H$-submodule together imply that $p_i p_j = \delta_{i,j} p_i$ using the direct sum property of $\bigoplus_i H_i$.

It is left to show that $H_i = H p_i$.
$H p_i \subseteq H_i$ follows immediately from the facts that $p_i \in H_i$ and that $H_i$ is an $H$-submodule.
In order to show that also $H_i \subseteq H p_i$, assume that $h_i \in H_i$.
We have $h_i = h_i 1 = \sum_j h_i p_j$.
Since $h_i p_j \in H_j$, this implies that $h_i = h_i p_i$, concluding the proof that $H_i \subseteq H p_i$.
\end{proof}

In this subsection we therefore want to show that $\sum_{i\in I} \phs{i} = 1$ (Thm. \ref{thm:the-phs-sum-up-to-one}).

First we need a lemma.
Note that the regular character $\chi_H : H \lto \kk$ lifts to $H/J$ via the quotient map $\pi : H \lto H/J$ (as all characters of $H$ do, since the Jacobson radical $J$ is a nil ideal).
In fact, we can furthermore show that on $H/J$ it is proportional to the character $\chi_{H/J}$ of the regular $H/J$-module, i.e.\ we have:
\begin{lemma} \label{lem:regular-character-chevalley}
Let $H$ be a Hopf algebra with the Chevalley property.
Then: $\chi_H = \frac{\dim(H)}{\dim(H/J)} \chi_{H/J} \circ \pi$.
\end{lemma}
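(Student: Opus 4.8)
The plan is to factor both sides through $H/J$ and then compute $\chi_H$ via the $J$\-adic filtration, where the Chevalley property will enter decisively. Since $J$ is a nilpotent two\-sided ideal, left multiplication $L_j$ by any $j\in J$ is nilpotent, hence trace\-free; thus $\chi_H$ vanishes on $J$ and descends to a functional $\bar\chi_H$ on $H/J$ with $\chi_H=\bar\chi_H\circ\pi$ (this is the remark preceding the statement). It therefore suffices to identify $\bar\chi_H$ with $\frac{\dim(H)}{\dim(H/J)}\chi_{H/J}$.

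Next I would compute $\bar\chi_H$ using the filtration $H\supseteq J\supseteq J^2\supseteq\cdots\supseteq J^n=0$. Each power $J^k$ is a two\-sided ideal, so $L_h$ preserves it for every $h\in H$, giving $\chi_H(h)=\sum_k\operatorname{tr}(L_h\mid J^k/J^{k+1})$. On each subquotient $J^k/J^{k+1}$ the $J$\-action is zero, so the action factors through $H/J$, and the sum is precisely the character of the associated graded module $\operatorname{gr}H:=\bigoplus_k J^k/J^{k+1}$, regarded as a left $H/J$\-module via $\bar h\cdot(x+J^{k+1})=hx+J^{k+1}$. Hence $\bar\chi_H=\chi_{\operatorname{gr}H}$, and the whole problem reduces to showing $\operatorname{gr}H\cong (H/J)^{\oplus N}$ as left $H/J$\-modules, where $N=\dim(H)/\dim(H/J)$.

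Here the Chevalley property is used through the Hopf structure. Since $J$ is a Hopf ideal, an induction on $k$ from $\Delta(J)\subseteq J\otimes H+H\otimes J$ gives $\Delta(J^k)\subseteq\sum_{a+b=k}J^a\otimes J^b$, so the $J$\-adic filtration is a Hopf\-algebra filtration and $\operatorname{gr}H$ is a graded Hopf algebra whose degree\-zero part is exactly $H/J$; moreover the module structure of the previous paragraph is just left multiplication by the Hopf subalgebra $(\operatorname{gr}H)^0=H/J$ inside $\operatorname{gr}H$. I expect this verification — that the coproduct respects the filtration, so that $H/J$ sits as a Hopf subalgebra in degree zero — to be the main technical obstacle. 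Granting it, the Nichols–Zoeller theorem applied to $H/J\subseteq\operatorname{gr}H$ shows that $\operatorname{gr}H$ is free as a left $H/J$\-module, necessarily of rank $N$, so $\chi_{\operatorname{gr}H}=N\,\chi_{H/J}$. Combining this with $\chi_H=\chi_{\operatorname{gr}H}\circ\pi$ yields $\chi_H=\frac{\dim(H)}{\dim(H/J)}\chi_{H/J}\circ\pi$, as claimed.
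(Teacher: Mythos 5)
Your proof is correct, and it takes a genuinely different route from the paper's. The paper argues purely at the level of characters: it applies the canonical isomorphism $H \otimes M \cong H \otimes M_{\mathrm{triv}}$ with $M = \pi^*(H/J)$, the analogous isomorphism over $H/J$, and the fact that $\pi^* : G_0(H/J) \to G_0(H)$ is an isomorphism of Grothendieck rings, so that $\chi_H = \pi^*(\chi_V)$ for some $H/J$-module $V$ with $\dim V = \dim H$; combining these identities gives the claim, with the Chevalley property entering only through $\pi$ being a Hopf algebra map (which makes $\pi^*$ multiplicative on characters). You instead argue structurally: you decompose the regular trace along the $J$-adic filtration, upgrade that filtration to a Hopf algebra filtration using $\Delta(J) \subseteq J \otimes H + H \otimes J$ (the induction works because each $J^a$ is a two-sided ideal), and apply Nichols--Zoeller to the Hopf subalgebra $H/J = (\operatorname{gr} H)_0 \subseteq \operatorname{gr} H$ to obtain $\operatorname{gr} H \cong (H/J)^{\oplus N}$ as left $H/J$-modules. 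Your intermediate result is stronger (freeness of the associated graded module, not merely a character identity), and it neatly circumvents the fact that $H/J$ need not embed into $H$ itself --- such a splitting is only guaranteed under the additional dual Chevalley property (Lemma \ref{lem:retract-for-chevalley-and-co-chevalley}) --- since the degree-zero part always splits off the associated graded. The price is heavier machinery: the standard but nontrivial verification that $\operatorname{gr} H$ is a graded Hopf algebra with $(\operatorname{gr} H)_0 = H/J$ (which you rightly flag as the main technical point, and which does go through: $\operatorname{gr}(H \otimes H) \cong \operatorname{gr}(H) \otimes \operatorname{gr}(H)$ for this finite filtration, and $\Delta$, $\eps$, $S$ all descend), plus the Nichols--Zoeller theorem, which the paper's proof of this particular lemma avoids entirely (the paper does invoke Nichols--Zoeller, but elsewhere and dually, for $(H/J)^* \subseteq H^*$ in Lemma \ref{lem:haarintegralofH/J}).
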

\begin{proof}
On the hand, we have for any left $H$-module $M$ a canonical $H$-module isomorphism $H \ot M \cong H \ot M_\text{triv}$.
Applying this to $M = \pi^*(H/J)$, by which we denote $H/J$ with the action of $H$ via the quotient map $\pi: H \lto H/J$, we obtain the equality of characters $\chi_H \cdot \chi_{\pi^*(H/J)} = \chi_H \dim(H/J) \in H^*$.

On the other hand, for any $H/J$-module $N$ we have a canonical isomorphism of $H/J$-modules $N \ot H/J \cong N_\text{triv} \ot H/J$, which implies for the characters: $\chi_N \cdot \chi_{H/J} = \dim(N) \chi_{H/J}$.

Next, observe that $\pi:H \lto H/J$ induces an isomorphism $\pi^* : G_0(H/J) \lto G_0(H)$ of the Grothendieck rings of $\lmod{H}$ and $\lmod{(H/J)}$.
Since the character of a module only depends on its class in the Grothendieck ring, this implies that there exists an $H/J$-module $V$ such that $\chi_H = \pi^*{\chi_V}$.
Moreover, $\dim(V) = \dim(H)$, since modules in the same class in the Grothendieck ring have the same dimension.

In summary, we obtain
\begin{align*}
\chi_H \cdot \chi_{\pi^*(H/J)} &= \pi^*(\chi_V \cdot \chi_{H/J}) \\
&= \pi^*(\dim(V) \chi_{H/J}) \\
&= \dim(H) \pi^*(\chi_{H/J}).
\end{align*}
Together with the first paragraph of the proof this shows the claim.
\end{proof}

As always denote by $I$ the set of isomorphism classes of simple $H$-modules and for $i \in I$ write $\phs{i} := \frac{\dim(i)}{\dim(H)} (S(\chi_i) \ot \id{H})(\Delta(\chi_{H^*}))$, as in Conjecture \ref{conjecture}, where $\chi_i \in H^*$ is the character of the simple $H$-module $S_i$ and where $\chi_{H^*} \in H^{**} \cong H$ is the regular character of $H^*$.

The following Theorem \ref{thm:the-phs-sum-up-to-one} proves that $\sum_{i\in I} \phs{i} = 1$ holds for a Hopf algebra $H$ with the Chevalley property, under an additional assumption on $H$.
In order to formulate this assumption, we have to introduce the so-called \emph{Hecke algebra} associated to $H^*$.
Since $H$ has the Chevalley property, $H/J$ is its maximal semisimple quotient-Hopf-algebra.
Dually this means that $H^*_0 := (H/J)^* \subseteq H^*$ is the maximal semisimple sub-Hopf-algebra of $H^*$.
(In other words, $H^*_0 = (H/J)^*$ is in particular the coradical \cite{montgomery} of $H^*$.)
Hence we can consider the unique Haar integral $\Lambda_0 \in H^*_0$ of this semisimple Hopf algebra $H^*_0$.
Now the space $\Lambda_0 H^* \Lambda_0 \subseteq H^*$ is an (in general, not unital) subalgebra of $H^*$ with unit $\Lambda_0$.
It can also be characterised as the endomorphism algebra $\End{H^*}{H^* \Lambda_0} \cong \Lambda_0 H^* \Lambda_0$ of the $H^*$-module $H^* \Lambda_0$ induced from the trivial $H^*_0$-module along the inclusion $H^*_0 \subseteq H^*$.
Hence:
\begin{definition} \label{def:hecke-algebra}  
We call the algebra $\Lambda_0 H^* \Lambda_0$ with unit $\Lambda_0$ the \emph{Hecke algebra} $\mathscr{H}(H^*,H^*_0)$ associated to the trivial representation of $H^*_0 \subseteq H^*$.
\end{definition}
Now we can state our result.

\begin{theorem} \label{thm:the-phs-sum-up-to-one}
Let $H$ be a Hopf algebra with the Chevalley property.
Let $\Lambda_0 \in H^*$ be the Haar integral of the maximal semisimple sub-Hopf-algebra $H^*_0 = (H/J)^*$.

Then
\[ \sum_{i\in I} \phs{i} = 1_H \]
if and only if the Hecke algebra $\Lambda_0 H^* \Lambda_0$ has up to isomorphism only one simple module.
\end{theorem}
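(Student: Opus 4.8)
The plan is to reduce the identity $\sum_{i\in I}\phs{i}=1_H$ to a statement about the characters of the Hecke algebra $\mathscr{H}(H^*,H^*_0)=\Lambda_0 H^*\Lambda_0$, and then to read off the claimed dichotomy from the composition factors of one explicit $\mathscr{H}$-module.

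First I would simplify the sum. Since every simple $H$-module factors through $H/J$ and $\dim(S_i)$ is the size of the corresponding block of the semisimple algebra $H/J$, the functional $\sum_{i\in I}\dim(S_i)\chi_i\in H^*$ is the regular character of $H/J$ pulled back along $\pi$; applying Proposition~\ref{prop:haar-integral-given-by-character} to the semisimple Hopf algebra $(H/J)^*=H^*_0$ identifies it with $\dim(H/J)\,\Lambda_0$. As the antipode of a semisimple Hopf algebra fixes its Haar integral, $S^*(\Lambda_0)=\Lambda_0$, whence
\[
\sum_{i\in I}\phs{i}=\dim(H/J)\,(\Lambda_0\otimes\id{H})(\Delta(\ph)).
\]
Viewing this element of $H\cong H^{**}$ as a functional on $H^*$ and using $\langle\ph,\Lambda_0 f\rangle=\Lambda_0(\ph_{(1)})f(\ph_{(2)})$ together with $\ph=\frac{1}{\dim H}\chi_{H^*}$, I obtain
\[
\Big(\textstyle\sum_{i\in I}\phs{i}\Big)(f)=\frac{\dim(H/J)}{\dim H}\,\chi_{H^*}(\Lambda_0 f),\qquad f\in H^*.
\]
Because $1_H$ corresponds to the counit $\eps_{H^*}$, the desired identity is equivalent to $\chi_{H^*}(\Lambda_0 f)=N\,\eps_{H^*}(f)$ for all $f$, where $N:=\dim H/\dim(H/J)$.

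Next I would interpret the trace $\chi_{H^*}(\Lambda_0 f)=\operatorname{tr}(L_{\Lambda_0 f}\colon H^*\to H^*)$, with $L_g$ left multiplication by $g$; since $L$ is an algebra homomorphism, $L_{\Lambda_0 f}=L_{\Lambda_0}L_f$. As $\Lambda_0$ is idempotent, $L_{\Lambda_0}$ is an idempotent operator with image $M:=\Lambda_0 H^*$, and a block decomposition of $H^*=M\oplus\ker L_{\Lambda_0}$ gives $\operatorname{tr}(L_{\Lambda_0}L_f)=\operatorname{tr}_M(m\mapsto\Lambda_0 f m)$. On $M$ this operator is left multiplication by $a:=\Lambda_0 f\Lambda_0\in\mathscr{H}$, so $\chi_{H^*}(\Lambda_0 f)=\chi_M(a)$, the character of $M$ as a left $\mathscr{H}$-module. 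Nichols--Zoeller freeness of $H^*$ over $H^*_0$ together with $\Lambda_0 H^*_0=\kk\Lambda_0$ yields $\dim M=N$, and $\eps_{H^*}(f)=\eps_{\mathscr{H}}(a)$ for the restriction $\eps_{\mathscr{H}}$ of $\eps_{H^*}$. Since $a=\Lambda_0 f\Lambda_0$ ranges over all of $\mathscr{H}$, the condition above becomes $\chi_M=(\dim M)\,\eps_{\mathscr{H}}$, i.e.\ (by linear independence of irreducible characters in characteristic zero) every composition factor of the $\mathscr{H}$-module $M$ is the trivial module $\kk_{\eps}$.

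Finally I would prove the equivalence ``$\chi_M=(\dim M)\eps_{\mathscr{H}}$'' $\Leftrightarrow$ ``$\mathscr{H}$ has a unique simple module''. The trivial module $\kk_{\eps}$ is one-dimensional, hence simple, so if $\mathscr{H}$ has a unique simple module it must be $\kk_{\eps}$, and then every $\mathscr{H}$-module, in particular $M$, has only $\kk_{\eps}$ as composition factor. Conversely, $\mathscr{H}=\Lambda_0 H^*\Lambda_0$ sits inside $M=\Lambda_0 H^*$ and is closed under left multiplication, so the regular representation of $\mathscr{H}$ is a submodule of $M$; since every simple $\mathscr{H}$-module occurs as a composition factor of the regular representation, the hypothesis that all composition factors of $M$ are $\kk_{\eps}$ forces $\kk_{\eps}$ to be the only simple. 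The main obstacle is the middle step: rewriting the regular-character trace on $H^*$ as the character of a genuine $\mathscr{H}$-module through the idempotent $L_{\Lambda_0}$, while correctly tracking the left/right module structures and the normalisations ($S^*(\Lambda_0)=\Lambda_0$ and $\dim M=N$) so that the trivial character of $\mathscr{H}$ appears exactly.
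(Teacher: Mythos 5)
Your proof is correct and follows essentially the same route as the paper: reduce $\sum_{i\in I}\phs{i}=1_H$ to the identity $\chi_{H^*}(\Lambda_0\,\cdot\,)=\frac{\dim(H)}{\dim(H/J)}\,\eps_{H^*}$ on $H^*$, recognise the left-hand side as the character of $\Lambda_0 H^*$ as a module over the Hecke algebra $\Lambda_0 H^*\Lambda_0$, compute $\dim(\Lambda_0 H^*)=\dim(H)/\dim(H/J)$ via Nichols--Zoeller, and settle both directions by the composition-factor and linear-independence-of-characters argument. The only deviations are local streamlinings --- you identify $\sum_{i}\dim(S_i)\chi_i$ with $\dim(H/J)\,\Lambda_0$ directly, using $S^*(\Lambda_0)=\Lambda_0$, thereby bypassing Lemma~\ref{lem:regular-character-chevalley}, and you replace the paper's $H^*_0$-bimodule Peirce decomposition of $H^*$ by the trace computation through the idempotent operator $L_{\Lambda_0}$ --- both of which are valid.
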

\begin{proof}
Since for the regular character $\chi_H \in H^*$ of $H$ we have by Lemma \ref{lem:regular-character-chevalley} \[ \chi_H = \frac{\dim(H)}{\dim(H/J)} \pi^*(\chi_{H/J}) = \frac{\dim(H)}{\dim(H/J)} \sum_{i\in I} \dim(i) \chi_i = \frac{\dim(H)}{\dim(H/J)} \sum_{i\in I} \dim(i) S(\chi_i), \]
the equation $\sum_{i\in I} \phs{i} = 1_H$ is equivalent to
\[ \frac{\dim(H/J)}{\dim(H)^2} (\chi_H \ot \id{H})(\Delta(\chi_{H^*})) = 1_H . \]
Using that $\Lambda_0 = \frac{1}{\dim(H/J)} \chi_{H/J}$ by semisimplicity of the Hopf algebra $H/J$, and $\frac{1}{\dim(H/J)} \chi_{H/J} = \frac{1}{\dim(H)} \chi_{H}$ by Lemma \ref{lem:regular-character-chevalley}, we rewrite this equation to
\[ (\Lambda_0 \ot \id{H})(\Delta(\chi_{H^*})) = \frac{\dim(H)}{\dim(H/J)} 1_H , \]
which can be rewritten as
\begin{equation} \label{eq:completenessrelationofidempotents-rewritten}
\chi_{H^*}(\Lambda_0 \cdot -) = \frac{\dim(H)}{\dim(H/J)} \eps_{H^*} .
\end{equation}
Since the subalgebra $H^*_0$ is semisimple, we can decompose $H^*$ as an $H^*_0$-bimodule as
\[ H^* = \bigoplus_{i,j \in I'} e_i H^* e_j =: \bigoplus_{i,j \in I'} H^*_{i,j} , \]
where $(e_i)_{i\in I'}$ are the central orthogonal idempotents of the semisimple algebra $H^*_0$  (in particular, $e_\II = \Lambda_0$, where $e_\II$ is the idempotent corresponding to the trivial $H^*_0$-module).
Therefore, with respect to this decomposition of $H^*$ we have:
\[ H^*_{i,j} \cdot H^*_{k,l} \subseteq \begin{cases} H^*_{i,l} &:\quad  j=k, \\ 0 &:\quad j\neq k. \end{cases} \]
In particular, if $i\neq j$, then $H^*_{i,j}$ contains only nilpotent elements.
From this it follows that both sides of equation \eqref{eq:completenessrelationofidempotents-rewritten} vanish on \[ \bigoplus_{\substack{i,j \in I' \\ (i,j)\neq (\II,\II)}} H^*_{i,j} \]
Indeed, both $\chi_{H^*}$ (being a character) and $\eps_{H^*}$ (being an algebra map) vanish on nilpotent elements of $H^*$.
Furthermore, for $i\neq \II$, $\chi_{H^*}(\Lambda_0\cdot -)$ vanishes on $H^*_{i,j}$ by orthogonality of $(e_i)_{i\in I'}$ and so does $\eps_{H^*}$ for the same reason, since $\eps_{H^*}(\Lambda_0)=1$.

Therefore, equation \eqref{eq:completenessrelationofidempotents-rewritten} is equivalent to
\begin{equation} \label{eq:completenessrelationofidempotents-reduced}
\chi_{H^*} \restrict{\Lambda_0 H^* \Lambda_0} = \frac{\dim(H)}{\dim(H/J)} \eps_{H^*} \restrict{\Lambda_0 H^* \Lambda_0} .
\end{equation}
For this, note that left multiplication by $\Lambda_0 H^* \Lambda_0$ on $H^*$ is non-zero only on the direct summand $\Lambda_0 H^* \subseteq \bigoplus_{i\in I'} e_i H^* = H^*$.
This defines an action of the algebra $\Lambda_0 H^* \Lambda_0$ (with unit $\Lambda_0$) on  $\Lambda_0 H^*$.
Thus equation \eqref{eq:completenessrelationofidempotents-reduced} is equivalent to the statement that the character of $\Lambda_0 H^*$ as a left $\Lambda_0 H^* \Lambda_0$-module is equal to $\frac{\dim(H)}{\dim(H/J)} \eps_{H^*}\restrict{\Lambda_0 H^* \Lambda_0}$.
We can show this to be equivalent to the statement that up to isomorphism, the algebra $\Lambda_0 H^* \Lambda_0$ has only one simple module:
the trivial one defined on $\kk$ via ${\eps_{H^*} \restrict{\Lambda_0 H^* \Lambda_0} : \Lambda_0 H^* \Lambda_0 \lto \kk}$.

Indeed, if this is the case, then the character of the $\Lambda_0 H^* \Lambda_0$-module $\Lambda_0 H^*$ must be equal to $n \cdot \eps_{H^*} \restrict{\Lambda_0 H^* \Lambda_0}$, where $n \in \NN$ is the length of the Jordan-Hölder series of the module $\Lambda_0 H^*$.
Evaluating on $\Lambda_0$, which is the unit for the algebra $\Lambda_0 H^* \Lambda_0$, gives $n = \dim(\Lambda_0 H^*)$.
Therefore, we obtain $\chi_{H^*} \restrict{\Lambda_0 H^* \Lambda_0} = \dim(\Lambda_0 H^*) \eps_{H^*} \restrict{\Lambda_0 H^* \Lambda_0}$.
It remains to verify that $\dim(\Lambda_0 H^*) = \frac{\dim(H)}{\dim(H/J)}$.
Indeed, by Nichols-Zoeller $H^* \cong (H^*_0)^N$ as a left $H^*_0$-module, for $N = \frac{\dim(H)}{\dim(H/J)}$.
Under this isomorphism we have $\Lambda_0 H^* \cong (\Lambda_0 H^*_0)^N = (\Lambda_0 \kk)^N$, since $\Lambda_0$ is the Haar integral of $H^*_0$.
Hence, $\dim(\Lambda_0 H^*) = N = \frac{\dim(H)}{\dim(H/J)}$.

Conversely, if there is another simple $\Lambda_0 H^* \Lambda_0$-module, not isomorphic to the trivial one given by $\eps_{H^*} \restrict{\Lambda_0 H^* \Lambda_0}$, then it is also a quotient of the regular $\Lambda_0 H^* \Lambda_0$-module and, hence, of $\Lambda_0 H^*$.
But then the character of $\Lambda_0 H^*$ cannot be equal to $\frac{\dim(H)}{\dim(H/J)} \eps_{H^*}\restrict{\Lambda_0 H^* \Lambda_0}$.
\end{proof}

Finally, we conclude from Theorems \ref{thm:idempotents_for_one-dimensional_simples} and \ref{thm:the-phs-sum-up-to-one} the validity of Conjecture \ref{conjecture} for a certain subclass of the Hopf algebras with the Chevalley property:

\begin{corollary} \label{corollary}
Let $H$ be a finite-dimensional basic Hopf algebra over $\kk$ and denote by $H^*_0 := (H/J)^*$ the maximal semisimple sub-Hopf-algebra of its dual $H^*$.
Assume that the associated Hecke algebra $\mathscr{H}(H^*, H^*_0)$ (cf. Definition \ref{def:hecke-algebra}) has, up to isomorphism, a unique simple $\mathscr{H}(H^*, H^*_0)$-module.
Then Conjecture \ref{conjecture} holds for $H$, i.e. $(\phs{i} = \dim(S_i) \chi_i(S(\ph_{(1)})) \ph_{(2)})_{i \in I}$ are orthogonal idempotents such that $H = \bigoplus_{i \in I} H \phs{i}$ is an isotypic decomposition for $H$.
\end{corollary}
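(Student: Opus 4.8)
The plan is to assemble the three main ingredients already proved in Section \ref{sec:chevalley} and glue them with Lemma \ref{lem:partitionofunityimpliesorthogonality}, after verifying that each hypothesis is met. First I would record that a basic Hopf algebra has the Chevalley property by Remark \ref{def:basic}, and that by definition every simple $H$-module $S_i$ is one-dimensional, so its character $\chi_i : H \to \kk$ is an algebra map. This means Theorem \ref{thm:idempotents_for_one-dimensional_simples} applies to \emph{every} $i \in I$, not just to a single one-dimensional simple. Invoking it, I obtain that each $\phs{i} = \dim(S_i)\,\chi_i(S(\ph_{(1)}))\,\ph_{(2)}$ is an idempotent and that $H\phs{i} \subseteq H$ is an $i$-isotypic component of $H$. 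In particular $\dim(S_i)=1$ here, so no spurious dimension factor appears (this is exactly the obstruction flagged in Example \ref{exa:non-chevalley}).

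Next I would assemble these individually-chosen isotypic components into a single decomposition. Since $H$ is a finite-dimensional Hopf algebra it is a Frobenius algebra, hence self-injective, so Proposition \ref{prop:isotypic_decomposition} applies: the natural map $\bigoplus_{i\in I} H\phs{i} \to H$ is an isomorphism of left $H$-modules, giving the internal direct sum decomposition $H = \bigoplus_{i\in I} H\phs{i}$. At this stage the decomposition exists, but as emphasised in the text preceding Lemma \ref{lem:partitionofunityimpliesorthogonality}, the projections onto the summands need not yet coincide with right multiplication by the $\phs{i}$; this is precisely the orthogonality question that remains.

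To resolve it I would bring in the hypothesis on the Hecke algebra. By assumption $\mathscr{H}(H^*,H^*_0) = \Lambda_0 H^* \Lambda_0$ has, up to isomorphism, a unique simple module, so Theorem \ref{thm:the-phs-sum-up-to-one} yields $\sum_{i\in I}\phs{i} = 1_H$. Now I have a direct sum decomposition $H = \bigoplus_{i\in I} H\phs{i}$ into left $H$-submodules, with $\phs{i} = \phs{i}^2 \in H\phs{i}$ for each $i$ and $\sum_{i\in I}\phs{i} = 1_H$. This is exactly the setup of Lemma \ref{lem:partitionofunityimpliesorthogonality}, which immediately delivers $\phs{i}\phs{j} = \delta_{i,j}\phs{i}$ and $H\phs{i} = H_i$. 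Hence the $(\phs{i})_{i\in I}$ are a complete set of orthogonal idempotents realising the isotypic decomposition, which is the assertion of Conjecture \ref{conjecture} for $H$.

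I do not expect a genuine obstacle here, as the corollary is essentially a bookkeeping combination of Theorem \ref{thm:idempotents_for_one-dimensional_simples}, Proposition \ref{prop:isotypic_decomposition}, and Theorem \ref{thm:the-phs-sum-up-to-one}. The one point demanding care is conceptual rather than computational: the distinction between the mere existence of the direct sum decomposition (from Frobenius self-injectivity plus idempotence) and the requirement that the summand projections be implemented by right multiplication with the $\phs{i}$. That gap is closed entirely by orthogonality, and orthogonality is in turn equivalent to the partition-of-unity relation $\sum_{i}\phs{i}=1_H$, which the Hecke-algebra condition guarantees through Theorem \ref{thm:the-phs-sum-up-to-one}.
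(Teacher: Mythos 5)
Your proposal is correct and follows essentially the same route as the paper: the paper's proof likewise combines Theorem \ref{thm:idempotents_for_one-dimensional_simples} with Proposition \ref{prop:isotypic_decomposition} to get the idempotents and the isotypic decomposition, and then uses Theorem \ref{thm:the-phs-sum-up-to-one} together with Lemma \ref{lem:partitionofunityimpliesorthogonality} to obtain orthogonality. Your write-up simply makes explicit the hypothesis checks (basic implies Chevalley, one-dimensional characters are algebra maps, Hopf algebras are self-injective) that the paper leaves implicit.
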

\begin{proof}
Theorem \ref{thm:idempotents_for_one-dimensional_simples} and Proposition \ref{prop:isotypic_decomposition} imply that the $(\phs{i})_{i \in I}$ are idempotents and that $H = \bigoplus_{i \in I} H \phs{i}$ is an isotypic decomposition, since $H$ has only one-dimensional simple $H$-modules.
Furthermore, Theorem \ref{thm:the-phs-sum-up-to-one} and Lemma \ref{lem:partitionofunityimpliesorthogonality} together imply that the $(\phs{i})_{i \in I}$ are orthogonal.
\end{proof}

\subsection{Hopf algebras with the Chevalley property and the dual Chevalley property}

Let $H$ be a Hopf algebra over $\kk$ with both the Chevalley property and the \emph{dual Chevalley property} (i.e. also the dual Hopf algebra $H^*$ has the Chevalley property).
Then a lot more can be said about the structure of $H$ and, in particular, our conjecture that the elements $\phs{i}$ give an isotypic decomposition of $H$ can be verified.

\begin{lemma}   \label{lem:retract-for-chevalley-and-co-chevalley}
Let $H$ be a Hopf algebra over $\kk$ with both the Chevalley property and the dual Chevalley property.
Then there exists a Hopf algebra section $\iota : H/J \lto H$ of the quotient map $\pi : H \lto H/J$, identifying the maximal semisimple quotient Hopf algebra $H/J$ with the maximal semisimple sub-Hopf-algebra $(H^* / J_{H^*})^* \subseteq H$.
\end{lemma}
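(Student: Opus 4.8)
The plan is to show that the restriction of the quotient map $\pi$ to the coradical $H_0$ of $H$ is an isomorphism of Hopf algebras onto $H/J$, and then to take $\iota$ to be its inverse followed by the inclusion $H_0 \hookrightarrow H$. Here $H_0 = (H^*/J_{H^*})^*$: dualizing the Hopf surjection $H^* \lto H^*/J_{H^*}$ (which is a Hopf algebra map precisely because $H$ has the dual Chevalley property) yields a Hopf algebra injection $(H^*/J_{H^*})^* \hookrightarrow H^{**} \cong H$ whose image is the coradical, the maximal semisimple sub-Hopf-algebra of $H$, exactly as for $H^*_0 = (H/J)^*$ discussed above. Since $H^*/J_{H^*}$ is a semisimple Hopf algebra, its dual $H_0$ is again semisimple by \cite{larsonRadford}; in particular $H_0$ has vanishing Jacobson radical. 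Writing $\iota_0 : H_0 \hookrightarrow H$ for the inclusion, the composite $\pi \circ \iota_0 : H_0 \lto H/J$ is then a morphism of Hopf algebras, being a composite of such.

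First I would prove injectivity of $\pi \circ \iota_0$, i.e.\ $H_0 \cap J = 0$. Because $H_0$ is a \emph{subalgebra} of $H$ (this is where the dual Chevalley property enters) and $J$ is a two-sided ideal, the intersection $H_0 \cap J$ is a two-sided ideal of $H_0$; since $J$ is nilpotent, so is $H_0 \cap J$. As $H_0$ is semisimple, its only nilpotent ideal is zero, whence $H_0 \cap J = 0$ and $\pi \circ \iota_0$ is injective. In particular $\dim H_0 \le \dim(H/J)$.

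The main point, and the step I expect to be the real obstacle, is surjectivity, which I would obtain by a symmetric dimension count rather than directly. Applying the previous paragraph verbatim to $H^*$ in place of $H$ — legitimate since $(H^*)_0$ is again a semisimple subalgebra, its being a subalgebra amounting to the dual Chevalley property of $H^*$, that is, the Chevalley property of $H \cong H^{**}$ — gives $(H^*)_0 \cap J_{H^*} = 0$ and hence $\dim (H^*)_0 \le \dim(H^*/J_{H^*})$. Translating through the dualities $(H^*)_0 = (H/J)^*$, so $\dim((H/J)^*) = \dim(H/J)$, and $\dim(H^*/J_{H^*}) = \dim H_0$, this inequality reads $\dim(H/J) \le \dim H_0$. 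Combined with the bound from the injectivity step we obtain $\dim H_0 = \dim(H/J)$ (equivalently $\dim J = \dim J_{H^*}$), so the injective Hopf algebra map $\pi \circ \iota_0$ is bijective and therefore a Hopf algebra isomorphism. Setting $\iota := \iota_0 \circ (\pi \circ \iota_0)^{-1} : H/J \lto H$ then gives a Hopf algebra map with $\pi \circ \iota = \id{H/J}$ and image $H_0 = (H^*/J_{H^*})^*$, as required. Both hypotheses are used essentially: the Chevalley property and its dual each supply one of the two injectivity bounds whose combination forces the dimension equality.
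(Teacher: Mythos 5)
Your proposal is correct and follows essentially the same route as the paper: identify $(H^*/J_{H^*})^*$ as the maximal semisimple sub-Hopf-algebra of $H$, prove injectivity of $\pi$ restricted to it via the fact that a semisimple algebra has no nonzero nilpotent ideal, and then get surjectivity by running the same nilpotent-ideal argument on $H^*$. The only (cosmetic) difference is that the paper concludes surjectivity by observing that the dual of the composite is the injective map $(H/J)^* \hookrightarrow H^* \twoheadrightarrow H^*/J_{H^*}$, whereas you phrase the same fact as a dimension count forcing the injective map $\pi \circ \iota_0$ to be bijective.
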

\begin{remark}
By Radford's projection theorem this implies that $H$ is isomorphic to the Radford biproduct $R \# (H/J)$, where $R := H^{\co (H/J)}$, the subspace of right $(H/J)$-coinvariants of $H$.
\end{remark}
\begin{proof}
Since $H^*$ has the Chevalley property, its maximal semisimple Hopf algebra quotient is ${H^* \twoheadrightarrow H^* / J_{H^*}}$, where $J_{H^*}$ is the Jacobson radical of $H^*$.
This means that \[ ( H^* / J_{H^*} )^* \subseteq H^{**} \cong H \] is the maximal semisimple sub-Hopf-algebra of $H$.
Consider the composition $( H^* / J_{H^*} )^* \hookrightarrow H \twoheadrightarrow H/J$ of inclusion and quotient map.
We will show that it is an isomorphism of Hopf algebras $( H^* / J_{H^*} )^* \cong H/J$.

Firstly, it is injective because $( H^* / J_{H^*} )^* \cap J = 0$, because $( H^* / J_{H^*} )^* \cap J \subseteq ( H^* / J_{H^*} )^*$ is a nilpotent ideal of $( H^* / J_{H^*} )^*$, but $( H^* / J_{H^*} )^*$ is semisimple and therefore has no non-zero nilpotent ideal.

Secondly, we see that $( H^* / J_{H^*} )^* \hookrightarrow H \twoheadrightarrow H/J$ is also surjective, because its dual is \[ H^*_0 \eqbydef (H/J)^* \hookrightarrow H^* \twoheadrightarrow H^* / J_{H^*}, \] which is just the inclusion of the maximal semisimple sub-Hopf-algebra followed by the surjection to the maximal semisimple quotient Hopf algebra for the dual Hopf algebra $H^*$, and this is injective by the above argument.
\end{proof}

\begin{proposition} \label{prop:chevalley-and-cochevalley}
Let $H$ be a Hopf algebra over $\kk$ with both the Chevalley property and the dual Chevalley property.
Then the family $\phs{i} \in H$, as described in Conjecture \ref{conjecture}, gives a set of orthogonal idempotents of an isotypic decomposition of $H$.
\end{proposition}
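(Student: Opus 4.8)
The plan is to exploit the Hopf algebra section $\iota : H/J \lto H$ provided by Lemma \ref{lem:retract-for-chevalley-and-co-chevalley}, whose image is the maximal semisimple sub-Hopf-algebra $A := (H^*/J_{H^*})^* \subseteq H$, and to show that each $\phs{i}$ is simply the $\iota$-image of the corresponding central idempotent of the semisimple Hopf algebra $H/J$. Once this is established, idempotence, orthogonality and the relation $\sum_{i\in I}\phs{i}=1_H$ all become immediate consequences of $\iota$ being an algebra map, and there is no need to assume that the simple modules are one-dimensional.

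The crucial step --- and the one I expect to be the main obstacle --- is to identify $\ph$ with $\iota(p_{H/J})$, the Haar integral of $A$. Here I would use the dual Chevalley property decisively by applying Lemma \ref{lem:regular-character-chevalley} to the Hopf algebra $H^*$ (which has the Chevalley property by hypothesis): this yields $\chi_{H^*} = \frac{\dim(H)}{\dim(A)}\,\chi_{H^*/J_{H^*}}\circ\pi_{H^*}$ as elements of $H^{**}\cong H$, where $\pi_{H^*}:H^*\lto H^*/J_{H^*}$ is the quotient map. The functional $\chi_{H^*/J_{H^*}}\circ\pi_{H^*}$ factors through $H^*/J_{H^*}=A^*$ and is precisely the regular character $\chi_{A^*}\in A^{**}\cong A$, so dividing by $\dim(H)$ gives $\ph = \frac{1}{\dim(A)}\chi_{A^*}$. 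By Proposition \ref{prop:haar-integral-given-by-character} this is exactly the Haar integral of the semisimple Hopf algebra $A$, and since the Hopf algebra isomorphism $\iota : H/J \to A$ carries the Haar integral $p_{H/J}$ to that of $A$, we conclude $\ph = \iota(p_{H/J})$. (Alternatively, one first observes that $\chi_{H^*}$ annihilates the nilpotent ideal $J_{H^*}$, since the trace of a nilpotent operator vanishes, so that $\ph$ already lies in $A$, and then identifies it with the Haar integral.)

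With $\ph = \iota(p_{H/J})$ in hand, I would compute $\phs{i}=\iota(e_i)$, where $(e_i)_{i\in I}$ are the central orthogonal idempotents of $H/J$ produced by the character-projector formula \eqref{eq:character-projector-formula} for the semisimple Hopf algebra $H/J$. This is a direct manipulation: $\iota$ is a morphism of coalgebras compatible with the antipode, so $\Delta(\ph)=(\iota\ot\iota)\Delta(p_{H/J})$ and $S(\ph_{(1)})=\iota(S(p_{(1)}))$; moreover each $\chi_i$ factors as $\bar\chi_i\circ\pi$ with $\pi\circ\iota=\id{}$, whence $\chi_i(S(\ph_{(1)}))=\bar\chi_i(S(p_{(1)}))$, and the formula for $\phs{i}$ collapses to $\iota$ applied to the formula for $e_i$.

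Finally, since $\iota$ is an algebra homomorphism carrying the complete set of central orthogonal idempotents $(e_i)_{i\in I}$ to $(\phs{i})_{i\in I}$, the $\phs{i}$ are orthogonal idempotents with $\sum_{i\in I}\phs{i}=\iota(1_{H/J})=1_H$. To see that each $H\phs{i}$ is genuinely an $i$-isotypic component, I would invoke the second half of the proof of Theorem \ref{thm:idempotents_for_one-dimensional_simples}: that argument shows that any idempotent preimage under $\pi$ of the central idempotent $e_i\in H/J$ generates a projective cover of $S_i^{\oplus\dim(S_i)}$, and it nowhere uses one-dimensionality. Since $\pi(\phs{i})=\pi(\iota(e_i))=e_i$, this applies verbatim. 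As orthogonal idempotents summing to $1_H$ they determine the decomposition $H=\bigoplus_{i\in I}H\phs{i}$, each summand of which is an isotypic component, which is the assertion.
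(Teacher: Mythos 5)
Your proof is correct and takes essentially the same route as the paper: both exploit the section $\iota : H/J \lto H$ from Lemma \ref{lem:retract-for-chevalley-and-co-chevalley} together with Lemma \ref{lem:regular-character-chevalley} applied to $H^*$ in order to show $\ph = \iota(p_{H/J})$ and hence $\phs{i} = \iota(e_i)$, from which orthogonality, idempotence and completeness follow since $\iota$ is an algebra map. Your phrasing of the key step via uniqueness of the Haar integral of $(H^*/J_{H^*})^*$, instead of the paper's explicit identification of $\iota$ with $\Pi^*$, is only a cosmetic difference.
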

\begin{proof}
We will prove this by proving that $\phs{i} = \iota(e_i)$ for the Hopf algebra inclusion ${\iota : H/J \lto H}$, which we have shown to exist in Lemma \ref{lem:retract-for-chevalley-and-co-chevalley}, where $e_i \in H/J$ are the central orthogonal idempotents of the isotypic decomposition of the semisimple Hopf algebra $H/J$.
We have $\phs{i} = \frac{\dim(S_i)}{\dim(H)} \chi_i(S({\chi_{H^*}}_{(1)})) {\chi_{H^*}}_{(2)}$ and $e_i = \frac{\dim(S_i)}{\dim(H/J)} \tild{\chi_i}(S({\chi_{(H/J)^*}}_{(1)})) {\chi_{(H/J)^*}}_{(2)}$, where $\chi_i \in H^*$ is the character of the $i$-th simple $H$-module and $\tild{\chi_i} \in (H/J)^*$ is the character of the corresponding $H/J$-module, i.e. $\chi_i = \tild{\chi_i} \circ \pi$.

What we thus have to show is that \[ \frac{1}{\dim(H)} \chi_i(S({\chi_{H^*}}_{(1)})) {\chi_{H^*}}_{(2)} = \frac{1}{\dim(H/J)} \tild{\chi_i}(S({\chi_{(H/J)^*}}_{(1)})) \iota( {\chi_{(H/J)^*}}_{(2)} )	.	\]
Using $\pi \circ \iota = \id{H/J}$ and that $\iota$ is a morphism of Hopf algebras, we obtain
\begin{align*}
\frac{1}{\dim(H/J)} \tild{\chi_i}(S({\chi_{(H/J)^*}}_{(1)})) \iota( {\chi_{(H/J)^*}}_{(2)} )
&= \frac{1}{\dim(H/J)} \tild{\chi_i}(\pi(\iota(S({\chi_{(H/J)^*}}_{(1)})))) \iota( {\chi_{(H/J)^*}}_{(2)} ) \\
&= \frac{1}{\dim(H/J)} \chi_i(\iota(S({\chi_{(H/J)^*}}_{(1)}))) \iota( {\chi_{(H/J)^*}}_{(2)} ) \\
&= \frac{1}{\dim(H/J)} \chi_i(S({\iota(\chi_{(H/J)^*})}_{(1)})) {\iota(\chi_{(H/J)^*})}_{(2)} .
\end{align*}
Thus it is left to show that
\[ \frac{1}{\dim(H/J)} \iota(\chi_{(H/J)^*}) = \frac{1}{\dim(H)} \chi_{H^*} \]
Denote by $\Pi : H^* \lto H^* / J_{H^*}$ the quotient map, which is a morphism of Hopf algebras due to the Chevalley property of $H^*$.
Then by Lemma \ref{lem:regular-character-chevalley} applied to $H^*$, we have \[ \frac{1}{\dim(H)} \chi_{H^*} = \frac{1}{\dim(H^* / J_{H^*})} \Pi^*(\chi_{H^* / J_{H^*}}) . \]
But in Lemma \ref{lem:retract-for-chevalley-and-co-chevalley} we have identified the Hopf algebras $(H^* / J_{H^*})^*$ and $H/J$ and via this identification, by definition, the injection $\iota : H/J \lto H$ corresponds to 
\[ \Pi^* : (H^* / J_{H^*})^* \lto H^{**} \cong H. \]
Hence, $\frac{1}{\dim(H^* / J_{H^*})} \Pi^*(\chi_{H^* / J_{H^*}}) = \frac{1}{\dim(H/J)} \iota(\chi_{(H/J)^*}),$
concluding the proof.
\end{proof}

\section{Examples}  \label{sec:examples}

In this last section of the paper we discuss two examples of Hopf algebras with the Chevalley property that we can show to satisfy our conjecture.
The first example is a basic Hopf algebra for which we can verify the assumptions of Theorem \ref{thm:the-phs-sum-up-to-one}, which implies that the conjecture holds.
The second example is not basic and it does not follow directly from our general results that it satisfies our conjecture, but we carry out explicit computations to show that it does.

\subsection{The dual of a deformation of the double cover of Sweedler's Hopf algebra: a basic Hopf algebra} \label{subsec:example-basic-hopf-algebra}

Recall the $8$-dimensional Hopf algebra $H(\mu)$ from Example \ref{exa:non-chevalley}.
Without loss of generality let us set $\mu = 2$, since for all $\mu \in \kk^\times$,
$H(\mu)$ is in the same isomorphism class of Hopf algebras.
Here we are interested in its dual Hopf algebra, which does satisfy the Chevalley property, and which by slight abuse of notation we will denote by $H$, so that $H^* = H(2)$.
Recall that $H^*$ is generated as an algebra by $g$ and $x$ subject to the relations
\begin{align*}
g^4 &= 1 \\
g x g^{-1} &= -x \\
x^2 &= (1 - g^2),
\end{align*}
with the co-multiplication given by $\Delta (g) = g \ot g$ and $\Delta (x) = x \ot 1 + g \ot x$.

$H^*$ has a $\ZZ_2$-grading as an algebra, $H^* = (H^*)_0 \oplus (H^*)_1$, where $(H^*)_0 = \linhuell_\kk\{1,g,g^2,g^3\} = \kk G$ is the group algebra of the group $G$ of group-like elements and $(H^*)_1 = (H^*)_0 \cdot x$.
Furthermore, we have $\Delta (H^*)_0 \subseteq (H^*)_0 \ot (H^*)_0$ and $\Delta (H^*)_1 \subseteq (H^*)_1 \ot (H^*)_0 \oplus (H^*)_0 \ot (H^*)_1$.

The simple $H^*$-comodules are given by the simple comodules of the coradical of $H^*$, which is $(H^*)_0 = \kk G \subseteq H^*$, and therefore we have four one-dimensional simple $H^*$-comodules corresponding to the four group elements of $G$.

For the dual Hopf algebra $H$ this means that there are four simple $H$-modules and they are one-dimensional and given by the four group elements $1,g,g^2,g^3 \in G$ interpreted as elements of $H^*$.

Let us consider the corresponding four elements $\ph, \phs{g}, \phs{g^2}, \phs{g^3} \in H$ given by the character-projector formula \eqref{eq:character-projector-formula-nonssi}.
Since all four simple modules are one-dimensional, our results from Subsection \ref{subsec:idempotence-of-conjectured-idempotents} imply that these four elements are idempotents projecting to appropriate isotypic components.
The remaining question of their orthogonality is answered affirmatively by Theorem \ref{thm:the-phs-sum-up-to-one}.
Indeed, we can verify that the Hecke algebra $\Lambda_0 H^* \Lambda_0$, where here $\Lambda_0 = \frac{1}{4} (1+g+g^2+g^3) \in H^*_0$, satisfies the condition of Theorem \ref{thm:the-phs-sum-up-to-one}, since
\[ \Lambda_0 H^* \Lambda_0 = \linhuell_\kk\{ \Lambda_0, \Lambda_0 x \Lambda_0 \} \cong \kk[x] / (x^2) .  \]

Finally we can also compute the orthogonal idempotents $\ph, \phs{g}, \phs{g^2}, \phs{g^3} \in H$ explicitly:
With respect to the basis $\{ 1, g, g^2, g^3, x, gx, g^2 x, g^3 x \}$ for $H^*$, with dual basis \[ \{ \delta_1, \delta_g, \delta_{g^2}, \delta_{g^3}, \delta_x, \delta_{gx}, \delta_{g^2 x}, \delta_{g^3 x} \} \] for $H$, it is easy to see by computation that we have:
\begin{align*}
\ph &= \delta_1 \\
\phs{g} &= \delta_g \\
\phs{g^2} &= \delta_{g^2} \\
\phs{g^3} &= \delta_{g^3}
\end{align*}
Therefore, indeed, the idempotents are orthogonal to each other.

\subsection{A Hopf algebra with the Chevalley property which is not basic} \label{subsec:example-72-dim}

Finally we consider an example of a Hopf algebra with the Chevalley property that is not basic, i.e. it does not have the property that all its simple modules are one-dimensional.
We show by computation that it still satisfies Conjecture \ref{conjecture}.
This gives evidence that our conjecture holds for all Hopf algebras with the Chevalley property, even though our general results only cover basic Hopf algebras (with an additional assumption on the associated Hecke algebra).

We describe the Hopf algebra $H$ that we want to consider, first by its dual Hopf algebra $H^*$.
As an algebra, $H^*$ is generated by the elements $a, b, c$ and $\{ e_g \mid g \in S_3 \}$ subject to the following relations:
\begin{align*}
e_g e_h &= \delta_{g,h} e_g \quad \forall g, h \in S_3 \\
\sum_{g\in S_3} e_g &= 1 \\
a e_g &= e_{(12)g} a \quad \forall g \in S_3 \\
b e_g &= e_{(23)g} b \quad \forall g \in S_3 \\
c e_g &= e_{(31)g} c \quad \forall g \in S_3 \\
ab + bc + ca &= 0 \\
ac + cb + ba &= 0 \\
a^2 &= \lambda_{ab}(e_{13} + e_{132}) + \lambda_{ac}(e_{23} + e_{123}) \\
b^2 &= \lambda_{bc}(e_{12} + e_{132}) + \lambda_{ba}(e_{13} + e_{123}) \\
c^2 &= \lambda_{ca}(e_{23} + e_{132}) + \lambda_{cb}(e_{12} + e_{123}) ,
\end{align*}
where $\lambda_{ab}, \lambda_{bc}, \lambda_{ca} \in \kk$ are the deformation parameters.
The co-multiplication on $H^*$ is given by
\begin{align*}
\Delta(e_g) &= \sum_{h \in S_3} e_{g h^{-1}} \ot e_h \quad \forall g \in S_3 \\
\Delta(a) &= a\ot 1  + (e_1-e_{12})\ot a + (e_{132}-e_{13})\ot b + (e_{123}-e_{23})\ot c	\\
\Delta(b) &= b\ot 1  + (e_1-e_{23})\ot b + (e_{132}-e_{12})\ot c + (e_{123}-e_{13})\ot a	\\
\Delta(c) &= c\ot 1  + (e_1-e_{13})\ot c + (e_{132}-e_{23})\ot a + (e_{123}-e_{12})\ot b .
\end{align*}
We see that $H^*$ contains the dual group algebra $\kk^{S_3} = \linhuell_\kk\{e_g \mid g\in S_3\}$ of the symmetric group $S_3$ as a sub-Hopf-algebra.
In fact, $H^*$ is a cocycle deformation of the Radford biproduct $B \# \kk^{S_3}$, where $B$ is the Nichols algebra generated by $a, b$ and $c$ and the dual group algebra $\kk^{S_3}$ is the maximal semisimple sub-Hopf-algebra.
For more details about this Hopf algebra see \cite{meir, andVay}.

For the dual Hopf algebra $H$ with the Chevalley property this means that it possesses the group algebra $\kk S_3$ as its maximal semisimple quotient Hopf algebra and, hence, the simple $H$-modules are given by the irreducible representations of $S_3$: the trivial representation, the sign representation and the two-dimensional simple $H$-module $V$, which is defined by:
\begin{align*}
(12) &\lmapsto \begin{pmatrix} 0 & 1 \\ 1 & 0 \end{pmatrix} \\
(123) &\lmapsto \begin{pmatrix} 0 & -1 \\ 1 & -1 \end{pmatrix} .
\end{align*}
For later reference we note that the character $\chi_V \in H^*$ of this $H$-module is $\chi_V = 2 e_1 - e_{123} - e_{132}$.

Now we want to compute the three corresponding elements $\ph, \phs{\sgn}$ and $\phs{V} \in H$ from the conjectured character-projector formula \eqref{eq:character-projector-formula-nonssi}.

Let us start with $\ph = \frac{1}{\dim(H)} \chi_{H^*}$, that is we have to compute the trace of the regular representation of $H^*$.
A convenient basis for $H^*$ as a vector space is given by \[ \{ 1,a,b,c,ab,bc,ac,cb,aba,abc,bac,abac \} \times \{ e_g \mid g \in S_3 \} . \]
Hence we can think of $H^*$ as being $\NN_{\geq 0}$-graded 
as a coalgebra (the grading comes from the grading of the Nichols algebra $B$), where the degree is determined by the length of the word in the letters $a, b$ and $c$.

The basis elements $e_g, g \in S_3,$ of degree $0$ are idempotents with $12$-dimensional image, that is $\chi_{H^*}(e_g)=12$.
This determines the trace of all elements of degree $0$.

The basis elements of degree $1$ are nilpotent:
For example, $$a e_g a e_g = a^2 e_{(12) g} e_g = a^2 \delta_{(12)g,g} e_g = 0$$ for all $g\in S_3$ since $(12) g \neq g$.
This implies that $\chi_{H^*}(a e_g) = 0$ and likewise $\chi_{H^*}(b e_g) = 0$ and $\chi_{H^*}(c e_g) = 0$ for all $g\in S_3$.

A similar argument shows that the trace is zero on the degree $2$ and degree $3$ parts of $H^*$.
On the degree $4$ component this argument does not work anymore, since $(31) (12) (23) (12) = 1$ and hence
\begin{equation} abac e_g abac e_g = (abac)^2 e_g, \label{abacegabaceg} \end{equation}
which we cannot immediately see to be zero by orthogonality of the $(e_g)_{g\in S_3}$ as before.
In order to obtain an explicit expression for $\chi_{H^*}(abac e_g)$, we start by computing $(abac)^2$.
\begin{lemma} \label{lem:abacabac}
\[ (abac)^2 =  abac((\lambda_{ab}^2 + \lambda_{ca}^2 - \lambda_{bc}^2)e_{23} + 2\lambda_{ab}\lambda_{ac}e_{132}) - \lambda_{ab}^2\lambda_{ac}^2 e_{23} - \lambda_{ab}^2\lambda_{ac}^2  e_{132} \]
\end{lemma}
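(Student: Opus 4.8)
The statement is a purely computational identity in the algebra $H^*$: I need to compute the square of the degree-$4$ element $abac$ using the defining relations and reduce it to a linear combination of the basis elements displayed on the right-hand side. The plan is to work entirely within the multiplication rules for $a,b,c$ together with the commutation relations $a e_g = e_{(12)g} a$, etc., and the quadratic relations $a^2, b^2, c^2$ and the cross relations $ab+bc+ca=0$ and $ac+cb+ba=0$. Since everything is concretely given, the proof is a bracketed expansion followed by normalisation of each monomial into the chosen basis.

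\textbf{Key steps.} First I would write $(abac)^2 = abac\cdot abac$ and insert a resolution of the identity $\sum_{g} e_g = 1$ appropriately, or — cleaner — track the $S_3$-grading via the commutation relations: each of $a,b,c$ acts by a transposition on the index of $e_g$, so I can move all $e_g$ to the right and record which group element the product of transpositions produces. The crucial observation already flagged before Lemma~\ref{lem:abacabac}, via equation \eqref{abacegabaceg}, is that $(31)(12)(23)(12)=1$, so the word $abac$ is ``index-neutral'' and $(abac)^2$ does \emph{not} immediately vanish by orthogonality of the $e_g$. I would then reduce the length-$8$ word $abacabac$ step by step: repeatedly apply the quadratic relations $a^2,b^2,c^2$ and the linear relations to rewrite any adjacent equal letters or to eliminate forbidden patterns, pushing toward the normal-form basis $\{1,a,b,c,ab,bc,ac,cb,aba,abc,bac,abac\}$ times $\{e_g\}$. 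Each time a square $a^2,b^2,c^2$ appears it produces a $\lambda$-scalar times a sum of two $e_g$'s, which is how the quadratic-in-$\lambda$ terms $\lambda_{ab}^2,\lambda_{ca}^2,\lambda_{bc}^2,\lambda_{ab}\lambda_{ac}$ and the constant-in-$\lambda$ term $\lambda_{ab}^2\lambda_{ac}^2$ arise; careful bookkeeping of which idempotent $e_g$ survives (governed by the transposition relations) is what pins down the specific indices $e_{23}$ and $e_{132}$ on the right-hand side.

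\textbf{Main obstacle.} The genuine difficulty is not conceptual but combinatorial: the length-$8$ word $abacabac$ admits many rewriting paths, and the relations $ab+bc+ca=0$ and $ac+cb+ba=0$ are not confluent in an obvious way, so I must choose a consistent reduction order (for instance, always rewrite using the relation that lowers the number of distinct generators, or fix a term order on the basis) to avoid double-counting or sign errors. In particular the factor-$2$ coefficient $2\lambda_{ab}\lambda_{ac}$ and the sign pattern $\lambda_{ab}^2+\lambda_{ca}^2-\lambda_{bc}^2$ come from combining several reduction branches, so the hard part is ensuring that all contributing monomials are collected with the correct signs before reading off the coefficients; a disciplined use of the $S_3$-index tracking to discard the vast majority of terms (those landing on an $e_g$ incompatible with the accumulated transposition word) is what makes the computation tractable.
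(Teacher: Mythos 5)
Your plan correctly identifies the method the paper itself uses -- expand $abacabac$ using the relations $ab+bc+ca=0$ and $ac+cb+ba=0$, push the idempotents $e_g$ to one side via the transposition rules $ae_g=e_{(12)g}a$, $be_g=e_{(23)g}b$, $ce_g=e_{(31)g}c$, and let the quadratic relations for $a^2,b^2,c^2$ generate the $\lambda$-coefficients -- but what you have written is a strategy, not a proof. The entire content of Lemma \ref{lem:abacabac} is the specific outcome of that computation: that the surviving idempotents are exactly $e_{23}$ and $e_{132}$, that the degree-$4$ coefficient on $e_{23}$ is $\lambda_{ab}^2+\lambda_{ca}^2-\lambda_{bc}^2$ while on $e_{132}$ it is $2\lambda_{ab}\lambda_{ac}$, and that the scalar terms are $-\lambda_{ab}^2\lambda_{ac}^2$ on both. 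None of these is derived in your proposal; you explicitly defer the ``careful bookkeeping'' that is the lemma. The paper's proof is precisely this bookkeeping, organized into intermediate reductions of the subwords $ababcbac$ and $ababca$ (its equations \eqref{ababcbac} and \eqref{ababca}) before reassembling $(abac)^2$ in \eqref{abacabac}; until you carry out an equivalent chain of reductions, the claimed right-hand side is unverified.

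A secondary point: the obstacle you flag -- non-confluence of the rewriting relations and the risk of ``double-counting'' -- is not a genuine issue. Each application of a defining relation is an equality in the algebra $H^*$, and since $\{1,a,b,c,ab,bc,ac,cb,aba,abc,bac,abac\}\times\{e_g \mid g\in S_3\}$ is a basis, any terminating sequence of valid rewritings must land on the same (unique) normal form; different reduction orders cannot yield different answers. The real difficulty is purely the length of the computation and the opportunity for sign or index errors, which is exactly why the execution, not the plan, constitutes the proof here.
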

\begin{proof}
We straightforwardly calculate using the relations of the algebra $H^*$:
\begin{align}
abacabac &= - aba(ab + bc)bac \notag\\
&= - abaabbac - ababcbac \notag\\
&= - ab(\lambda_{ab}(e_{13} + e_{132}) + \lambda_{ac}(e_{23} + e_{123}))(\lambda_{bc}(e_{12} + e_{132}) + \lambda_{ba}(e_{13} + e_{123}))ac - ababcbac .
\end{align}
We further calculate
\begin{align}
ababcbac &= - ababc(ac + cb)c \notag\\
&= - ababcacc - ababccbc \notag\\
&= - ababca(\lambda_{ca}(e_{23} + e_{132}) + \lambda_{cb}(e_{12} + e_{123}))
- abab(\lambda_{ca}(e_{23} + e_{132}) + \lambda_{cb}(e_{12} + e_{123}))bc \notag\\
&= - ababca(\lambda_{ca}(e_{23} + e_{132}) + \lambda_{cb}(e_{12} + e_{123})) \notag\\
&\phantom{xxx}- ababbc(\lambda_{ca}(e_{(31)(23)(23)} + e_{(31)(23)(132)}) + \lambda_{cb}(e_{(31)(23)(12)} + e_{(31)(23)(123)})) \notag\\
&= - ababca(\lambda_{ca}(e_{23} + e_{132}) + \lambda_{cb}(e_{12} + e_{123})) \notag\\
&\phantom{xxx}- ababbc(\lambda_{ca}(e_{31} + e_{123}) + \lambda_{cb}(e_{23} + e_{1})) \notag\\
&= - ababca(\lambda_{ca}(e_{23} + e_{132}) + \lambda_{cb}(e_{12} + e_{123})) \notag\\
&\phantom{xxx}- aba(\lambda_{bc}(e_{12} + e_{132}) + \lambda_{ba}(e_{13} + e_{123}))c(\lambda_{ca}(e_{31} + e_{123}) + \lambda_{cb}(e_{23} + e_{1})) \notag\\
&= - ababca(\lambda_{ca}(e_{23} + e_{132}) + \lambda_{cb}(e_{12} + e_{123})) \notag\\
&\phantom{xxx}- abac (\lambda_{bc}(e_{(31)(12)} + e_{(31)(132)}) + \lambda_{ba}(e_{(31)(13)} + e_{(31)(123)})) (\lambda_{ca}(e_{31} + e_{123}) + \lambda_{cb}(e_{23} + e_{1})) \notag\\
&= - ababca(\lambda_{ca}(e_{23} + e_{132}) + \lambda_{cb}(e_{12} + e_{123})) \notag\\
&\phantom{xxx}- abac (\lambda_{bc}(e_{123} + e_{23}) + \lambda_{ba}(e_{1} + e_{12})) (\lambda_{ca}(e_{31} + e_{123}) + \lambda_{cb}(e_{23} + e_{1})) \label{ababcbac}
\end{align}
and
\begin{align}
ababca &= - aba(ab + ca)a \notag\\
&= - abaaba - abacaa \notag\\
&= - ab(\lambda_{ab}(e_{13} + e_{132}) + \lambda_{ac}(e_{23} + e_{123}))ba - abac(\lambda_{ab}(e_{13} + e_{132}) + \lambda_{ac}(e_{23} + e_{123})) \notag\\
&= - abba(\lambda_{ab}(e_{(12)(23)(13)} + e_{(12)(23)(132)}) + \lambda_{ac}(e_{(12)(23)(23)} + e_{(12)(23)(123)})) \notag\\
&\phantom{xxx}- abac(\lambda_{ab}(e_{13} + e_{132}) + \lambda_{ac}(e_{23} + e_{123})) \notag\\
&= - abba(\lambda_{ab}(e_{23} + e_{1}) + \lambda_{ac}(e_{12} + e_{132})) \notag\\
&\phantom{xxx}- abac(\lambda_{ab}(e_{13} + e_{132}) + \lambda_{ac}(e_{23} + e_{123})) \notag\\
&= - a(\lambda_{bc}(e_{12} + e_{132}) + \lambda_{ba}(e_{13} + e_{123}))a(\lambda_{ab}(e_{23} + e_{1}) + \lambda_{ac}(e_{12} + e_{132})) \notag\\
&\phantom{xxx}- abac(\lambda_{ab}(e_{13} + e_{132}) + \lambda_{ac}(e_{23} + e_{123})) \notag\\
&= - aa(\lambda_{bc}(e_{(12)(12)} + e_{(12)(132)}) + \lambda_{ba}(e_{(12)(13)} + e_{(12)(123)}))
(\lambda_{ab}(e_{23} + e_{1}) + \lambda_{ac}(e_{12} + e_{132})) \notag\\
&\phantom{xxx}- abac(\lambda_{ab}(e_{13} + e_{132}) + \lambda_{ac}(e_{23} + e_{123})) \notag\\
&= - aa(\lambda_{bc}(e_{1} + e_{13}) + \lambda_{ba}(e_{132} + e_{23}))
(\lambda_{ab}(e_{23} + e_{1}) + \lambda_{ac}(e_{12} + e_{132})) \notag\\
&\phantom{xxx}- abac(\lambda_{ab}(e_{13} + e_{132}) + \lambda_{ac}(e_{23} + e_{123})) \notag\\
&= - (\lambda_{ab}(e_{13} + e_{132}) + \lambda_{ac}(e_{23} + e_{123}))(\lambda_{bc}(e_{1} + e_{13}) + \lambda_{ba}(e_{132} + e_{23})) \notag\\
&\phantom{xxxxx}\cdot(\lambda_{ab}(e_{23} + e_{1}) + \lambda_{ac}(e_{12} + e_{132})) \notag\\
&\phantom{xxx}- abac(\lambda_{ab}(e_{13} + e_{132}) + \lambda_{ac}(e_{23} + e_{123})) \notag\\
&= - (\lambda_{ab}\lambda_{bc} e_{13} + \lambda_{ab}\lambda_{ba}  e_{132}
+ \lambda_{ac}\lambda_{ba} e_{23}) (\lambda_{ab}(e_{23} + e_{1}) + \lambda_{ac}(e_{12} + e_{132})) \notag\\
&\phantom{xxx}- abac(\lambda_{ab}(e_{13} + e_{132}) + \lambda_{ac}(e_{23} + e_{123})) \notag\\
&= - \lambda_{ab}\lambda_{ba}\lambda_{ac}  e_{132}
- \lambda_{ac}\lambda_{ba}\lambda_{ab} e_{23}
- abac(\lambda_{ab}(e_{13} + e_{132}) + \lambda_{ac}(e_{23} + e_{123})) . \label{ababca}
\end{align}
Putting the above computations together, we finally obtain:
\begin{align}
abacabac &= - ab(\lambda_{ab}(e_{13} + e_{132}) + \lambda_{ac}(e_{23} + e_{123}))(\lambda_{bc}(e_{12} + e_{132}) + \lambda_{ba}(e_{13} + e_{123}))ac - ababcbac \notag\\
&\stackrel{(\ref{ababcbac})}{=} - ab(\lambda_{ab}(e_{13} + e_{132}) + \lambda_{ac}(e_{23} + e_{123}))(\lambda_{bc}(e_{12} + e_{132}) + \lambda_{ba}(e_{13} + e_{123}))ac \notag\\
&\phantom{xxx} + ababca(\lambda_{ca}(e_{23} + e_{132}) + \lambda_{cb}(e_{12} + e_{123})) \notag\\
&\phantom{xxx} + abac (\lambda_{bc}(e_{123} + e_{23}) + \lambda_{ba}(e_{1} + e_{12})) (\lambda_{ca}(e_{31} + e_{123}) + \lambda_{cb}(e_{23} + e_{1})) \notag\\
&\stackrel{(\ref{ababca})}{=} - ab(\lambda_{ab}(e_{13} + e_{132}) + \lambda_{ac}(e_{23} + e_{123}))(\lambda_{bc}(e_{12} + e_{132}) + \lambda_{ba}(e_{13} + e_{123}))ac \notag\\
&\phantom{xxx} + (- \lambda_{ab}\lambda_{ba}\lambda_{ac}  e_{132} - \lambda_{ac}\lambda_{ba}\lambda_{ab} e_{23} - abac(\lambda_{ab}(e_{13} + e_{132}) + \lambda_{ac}(e_{23} + e_{123}))) \notag\\
&\phantom{xxxxx}\cdot (\lambda_{ca}(e_{23} + e_{132}) + \lambda_{cb}(e_{12} + e_{123})) \notag\\
&\phantom{xxx} + abac (\lambda_{bc}(e_{123} + e_{23}) + \lambda_{ba}(e_{1} + e_{12})) (\lambda_{ca}(e_{31} + e_{123}) + \lambda_{cb}(e_{23} + e_{1})) \notag\\
&= - ab(\lambda_{ab}(e_{13} + e_{132}) + \lambda_{ac}(e_{23} + e_{123}))(\lambda_{bc}(e_{12} + e_{132}) + \lambda_{ba}(e_{13} + e_{123}))ac \notag\\
&\phantom{xxx} - \lambda_{ab}\lambda_{ba}\lambda_{ac}\lambda_{ca}  e_{132} - \lambda_{ac}\lambda_{ba}\lambda_{ab}\lambda_{ca} e_{23} - abac(\lambda_{ab}\lambda_{ca}e_{132} + \lambda_{ac}\lambda_{ca}e_{23} + \lambda_{ac}\lambda_{cb} e_{123}) \notag\\
&\phantom{xxx} + abac (\lambda_{bc}(e_{123} + e_{23}) + \lambda_{ba}(e_{1} + e_{12})) (\lambda_{ca}(e_{31} + e_{123}) + \lambda_{cb}(e_{23} + e_{1})) \notag\\
&= - abac(\lambda_{ab}(e_{(31)(12)(13)} + e_{(31)(12)(132)}) + \lambda_{ac}(e_{(31)(12)(23)} + e_{(31)(12)(123)})) \notag\\
&\phantom{xxxxx}\cdot (\lambda_{bc}(e_{(31)(12)(12)} + e_{(31)(12)(132)}) + \lambda_{ba}(e_{(31)(12)(13)} + e_{(31)(12)(123)})) \notag\\
&\phantom{xxx} - \lambda_{ab}\lambda_{ba}\lambda_{ac}\lambda_{ca}  e_{132} - \lambda_{ac}\lambda_{ba}\lambda_{ab}\lambda_{ca} e_{23} - abac(\lambda_{ab}\lambda_{ca}e_{132} + \lambda_{ac}\lambda_{ca}e_{23} + \lambda_{ac}\lambda_{cb} e_{123}) \notag\\
&\phantom{xxx} + abac (\lambda_{bc}(e_{123} + e_{23}) + \lambda_{ba}(e_{1} + e_{12})) (\lambda_{ca}(e_{31} + e_{123}) + \lambda_{cb}(e_{23} + e_{1})) \notag\\
&= - abac(\lambda_{ab}(e_{23} + e_{1}) + \lambda_{ac}(e_{12} + e_{132}))(\lambda_{bc}(e_{31} + e_{1}) + \lambda_{ba}(e_{23} + e_{132})) \notag\\
&\phantom{xxx} - \lambda_{ab}\lambda_{ba}\lambda_{ac}\lambda_{ca}  e_{132} - \lambda_{ac}\lambda_{ba}\lambda_{ab}\lambda_{ca} e_{23} - abac(\lambda_{ab}\lambda_{ca}e_{132} + \lambda_{ac}\lambda_{ca}e_{23} + \lambda_{ac}\lambda_{cb} e_{123}) \notag\\
&\phantom{xxx} + abac (\lambda_{bc}(e_{123} + e_{23}) + \lambda_{ba}(e_{1} + e_{12})) (\lambda_{ca}(e_{31} + e_{123}) + \lambda_{cb}(e_{23} + e_{1})) \notag\\
&= - abac(\lambda_{ab}\lambda_{ba}e_{23} + \lambda_{ab}\lambda_{bc}e_{1} + \lambda_{ac}\lambda_{ba}e_{132}) \notag\\
&\phantom{xxx} - \lambda_{ab}\lambda_{ba}\lambda_{ac}\lambda_{ca}  e_{132} - \lambda_{ac}\lambda_{ba}\lambda_{ab}\lambda_{ca} e_{23} - abac(\lambda_{ab}\lambda_{ca}e_{132} + \lambda_{ac}\lambda_{ca}e_{23} + \lambda_{ac}\lambda_{cb} e_{123}) \notag\\
&\phantom{xxx} + abac (\lambda_{bc}\lambda_{ca}e_{123} + \lambda_{bc}\lambda_{cb}e_{23} + \lambda_{ba}\lambda_{cb}e_{1}) \notag\\
&= abac(- \lambda_{ab}\lambda_{ba}e_{23} - \lambda_{ab}\lambda_{bc}e_{1} - \lambda_{ac}\lambda_{ba}e_{132} - \lambda_{ab}\lambda_{ca}e_{132} - \lambda_{ac}\lambda_{ca}e_{23} - \lambda_{ac}\lambda_{cb} e_{123} \notag\\
&\phantom{xxxxxxx} + \lambda_{bc}\lambda_{ca}e_{123} + \lambda_{bc}\lambda_{cb}e_{23} + \lambda_{ba}\lambda_{cb}e_{1}) \notag\\
&\phantom{xxx} - \lambda_{ab}\lambda_{ba}\lambda_{ac}\lambda_{ca}  e_{132} - \lambda_{ac}\lambda_{ba}\lambda_{ab}\lambda_{ca} e_{23} \notag\\
&= abac((- \lambda_{ab}\lambda_{ba} - \lambda_{ac}\lambda_{ca} + \lambda_{bc}\lambda_{cb})e_{23} + (- \lambda_{ab}\lambda_{bc} + \lambda_{ba}\lambda_{cb})e_{1} + (- \lambda_{ac}\lambda_{ba} - \lambda_{ab}\lambda_{ca})e_{132} \notag\\
&\phantom{xxxxxxx}  + ( - \lambda_{ac}\lambda_{cb} + \lambda_{bc}\lambda_{ca})e_{123} ) \notag\\
&\phantom{xxx} - \lambda_{ab}\lambda_{ba}\lambda_{ac}\lambda_{ca}  e_{132} - \lambda_{ac}\lambda_{ba}\lambda_{ab}\lambda_{ca} e_{23} \notag\\
&= abac((\lambda_{ab}^2 + \lambda_{ca}^2 - \lambda_{bc}^2)e_{23} + (- \lambda_{ab}\lambda_{bc} + \lambda_{ab}\lambda_{bc})e_{1} + (2\lambda_{ac}\lambda_{ab})e_{132} \notag\\
&\phantom{xxxxxxx}  + ( - \lambda_{ca}\lambda_{bc} + \lambda_{bc}\lambda_{ca})e_{123} ) \notag\\
&\phantom{xxx} - \lambda_{ab}^2\lambda_{ac}^2  e_{132}
- \lambda_{ac}\lambda_{ab}\lambda_{ab}\lambda_{ac} e_{23} \notag\\
&= abac((\lambda_{ab}^2 + \lambda_{ca}^2 - \lambda_{bc}^2)e_{23} + 2\lambda_{ab}\lambda_{ac}e_{132}) - \lambda_{ab}^2\lambda_{ac}^2 e_{23} - \lambda_{ab}^2\lambda_{ac}^2  e_{132} . \label{abacabac}
\end{align}
\end{proof}

Combining \eqref{abacegabaceg} and Lemma \ref{lem:abacabac} we immediately obtain
\[ (abac\ e_{12})^2 = 0 \text{ and } (abac\ e_{31})^2 = 0 . \]
This implies that
\begin{equation} \label{tr12-31}
\chi_{H^*}(abac\ e_{12}) = 0 \text{ and } \chi_{H^*}(abac\ e_{31}) = 0 .
\end{equation}

Furthermore, using the relations of the algebra $H^*$ -- in particular, we will use several times (we will indicate it when we do) that
\begin{align}
baca  &= - b(cb + ba)a	\notag\\
&= - bcba - b^2a^2	\notag\\
&= bc(ac + cb) - b^2a^2	\notag\\
&= bc(ac + cb) - b^2a^2	\notag\\
&= bcac + bc^2b - b^2a^2	\notag\\
&= - (ab + ca)ac + bc^2b - b^2a^2	\notag\\
&= - abac - ca^2c + bc^2b - b^2a^2	\label{eq:baca},
\end{align}
and using the cyclicity of the trace, we can compute:
\begin{align}
\chi_{H^*}(abac\ e_{23}) &= \chi_{H^*}(baca\ e_{(12)(23)}) \notag\\
&= \chi_{H^*}(acab\ e_{(23)(12)(23)}) \notag\\
&= \chi_{H^*}(acab\ e_{31}) \notag\\
&= \chi_{H^*}(-a(ab + bc)b\ e_{31}) \notag\\
&= \chi_{H^*}(-a^2b^2\ e_{31} - abcb\ e_{31}) \notag\\
&= \chi_{H^*}(-a^2b^2\ e_{31} + ab(ac + ba)\ e_{31}) \notag\\
&= \chi_{H^*}(-a^2b^2\ e_{31} + ab^2a\ e_{31} + abac\ e_{31}) \notag\\
&\stackrel{\eqref{tr12-31}}{=} \chi_{H^*}(-a^2b^2\ e_{31} + a(\lambda_{bc}(e_{12} + e_{132}) + \lambda_{ba}(e_{13} + e_{123}))a\ e_{31}) \notag\\
&= \chi_{H^*}(-a^2b^2\ e_{31}  \notag\\
&\phantom{xxxxxxx} + a^2(\lambda_{bc}(e_{(12)(12)} + e_{(12)(132)}) + \lambda_{ba}(e_{(12)(13)} + e_{(12)(123)})) e_{31} ) \notag\\
&= \chi_{H^*}(-a^2b^2\ e_{31} + a^2(\lambda_{bc}(e_{1} + e_{31}) + \lambda_{ba}(e_{132} + e_{23})) e_{31} ) \notag\\
&= \chi_{H^*}(-a^2b^2\ e_{31} + a^2\lambda_{bc}e_{31} ) \notag\\
&= \chi_{H^*}(-(\lambda_{ab}(e_{13} + e_{132}) + \lambda_{ac}(e_{23} + e_{123})) (\lambda_{bc}(e_{12} + e_{132}) + \lambda_{ba}(e_{13} + e_{123})) e_{31} \notag\\
&\phantom{xxxxxxx} + (\lambda_{ab}(e_{13} + e_{132}) + \lambda_{ac}(e_{23} + e_{123})) \lambda_{bc}e_{31} ) \notag\\
&= \chi_{H^*}(-\lambda_{ab}\lambda_{ba} e_{31} + \lambda_{ab} \lambda_{bc}e_{31} ) \notag\\
&= \chi_{H^*}(\lambda_{ab}(\lambda_{ab} + \lambda_{bc}) e_{31} ) \notag\\
&= \chi_{H^*}(\lambda_{ab}\lambda_{ac} e_{31} ) \notag\\
&= 12 \lambda_{ab}\lambda_{ac}. \label{tr23}
\end{align}
\begin{align}
\chi_{H^*}(abac\ e_{1}) &= \chi_{H^*}(baca\ e_{12}) \notag\\
&\stackrel{\eqref{eq:baca}}{=} \chi_{H^*}( - abac\ e_{12} - ca^2c\ e_{12} + bc^2b\ e_{12} - b^2a^2\ e_{12}) \notag\\
&\stackrel{\eqref{tr12-31}}{=} \chi_{H^*}( - ca^2c\ e_{12} + bc^2b\ e_{12} - b^2a^2\ e_{12}) \notag\\
&= \chi_{H^*}( - c(\lambda_{ab}(e_{13} + e_{132}) + \lambda_{ac}(e_{23} + e_{123}))c\ e_{12} \notag\\
&\phantom{xxxxxxx} + b(\lambda_{ca}(e_{23} + e_{132}) + \lambda_{cb}(e_{12} + e_{123}))b\ e_{12} \notag\\
&\phantom{xxxxxxx} - (\lambda_{bc}(e_{12} + e_{132}) + \lambda_{ba}(e_{13} + e_{123})) \notag\\
&\phantom{xxxxxxxxx} \cdot (\lambda_{ab}(e_{13} + e_{132}) + \lambda_{ac}(e_{23} + e_{123}))\ e_{12} ) \notag\\
&= \chi_{H^*}( - c(\lambda_{ab}(e_{13} + e_{132}) + \lambda_{ac}(e_{23} + e_{123}))c\ e_{12} \notag\\
&\phantom{xxxxxxx} + b(\lambda_{ca}(e_{23} + e_{132}) + \lambda_{cb}(e_{12} + e_{123}))b\ e_{12} ) \notag\\
&= \chi_{H^*}( - c^2(\lambda_{ab}(e_{(13)(13)} + e_{(13)(132)}) + \lambda_{ac}(e_{(13)(23)} + e_{(13)(123)})) e_{12} \notag\\
&\phantom{xxxxxxx} + b(\lambda_{ca}(e_{23} + e_{132}) + \lambda_{cb}(e_{12} + e_{123}))b\ e_{12} ) \notag\\
&= \chi_{H^*}( - c^2(\lambda_{ab}(e_{1} + e_{23}) + \lambda_{ac}(e_{132} + e_{12})) e_{12} \notag\\
&\phantom{xxxxxxx} + b(\lambda_{ca}(e_{23} + e_{132}) + \lambda_{cb}(e_{12} + e_{123}))b\ e_{12} ) \notag\\
&= \chi_{H^*}( - c^2 \lambda_{ac} e_{12} \notag\\
&\phantom{xxxxxxx} + b(\lambda_{ca}(e_{23} + e_{132}) + \lambda_{cb}(e_{12} + e_{123}))b\ e_{12} ) \notag\\
&= \chi_{H^*}( - (\lambda_{ca}(e_{23} + e_{132}) + \lambda_{cb}(e_{12} + e_{123})) \lambda_{ac} e_{12} \notag\\
&\phantom{xxxxxxx} + b^2 (\lambda_{ca}(e_{(23)(23)} + e_{(23)(132)}) + \lambda_{cb}(e_{(23)(12)} + e_{(23)(123)}))  e_{12} \notag\\
&= \chi_{H^*}( - \lambda_{cb} \lambda_{ac} e_{12} + b^2 (\lambda_{ca}(e_{1} + e_{12}) + \lambda_{cb}(e_{132} + e_{31}))  e_{12} \notag\\
&= \chi_{H^*}( - \lambda_{cb} \lambda_{ac} e_{12} + b^2 \lambda_{ca} e_{12} ) \notag\\
&= \chi_{H^*}( - \lambda_{cb} \lambda_{ac} e_{12} + (\lambda_{bc}(e_{12} + e_{132}) + \lambda_{ba}(e_{13} + e_{123})) \lambda_{ca} e_{12} ) \notag\\
&= \chi_{H^*}( - \lambda_{cb} \lambda_{ac} e_{12} + \lambda_{bc} \lambda_{ca} e_{12} ) \notag\\
&= \chi_{H^*}( - \lambda_{cb} \lambda_{ac} e_{12} + \lambda_{bc} \lambda_{ca} e_{12} ) \notag\\
&= 0. \notag
\end{align}
\begin{align}
\chi_{H^*}(abac\ e_{123}) &= \chi_{H^*}(baca\ e_{(12)(123)}) \notag\\
&= \chi_{H^*}(baca\ e_{23}) \notag\\
&\stackrel{\eqref{eq:baca}}{=} \chi_{H^*}( - abac\ e_{23} - ca^2c\ e_{23} + bc^2b\ e_{23} - b^2a^2 e_{23}) \notag\\
&\stackrel{\eqref{tr23}}{=} \chi_{H^*}(- ca^2c\ e_{23} + bc^2b\ e_{23} - b^2a^2 e_{23}) - 12 \lambda_{ab}\lambda_{ac} \notag\\
&= \chi_{H^*}(- c(\lambda_{ab}(e_{13} + e_{132}) + \lambda_{ac}(e_{23} + e_{123}))c\ e_{23} \notag\\
&\phantom{xxxxxxx} + b(\lambda_{ca}(e_{23} + e_{132}) + \lambda_{cb}(e_{12} + e_{123}))b\ e_{23} - b^2a^2 e_{23}) \notag\\
&\phantom{xxx} - 12 \lambda_{ab}\lambda_{ac} \notag\\
&= \chi_{H^*}(- c^2(\lambda_{ab}(e_{(13)(13)} + e_{(13)(132)}) + \lambda_{ac}(e_{(13)(23)} + e_{(13)(123)})) e_{23} \notag\\
&\phantom{xxxxxxx} + b^2(\lambda_{ca}(e_{(23)(23)} + e_{(23)(132)}) + \lambda_{cb}(e_{(23)(12)} + e_{(23)(123)})) e_{23} - b^2a^2 e_{23}) \notag\\
&\phantom{xxx} - 12 \lambda_{ab}\lambda_{ac} \notag\\
&= \chi_{H^*}(- c^2(\lambda_{ab}(e_{1} + e_{23}) + \lambda_{ac}(e_{132} + e_{12})) e_{23} \notag\\
&\phantom{xxxxxxx} + b^2(\lambda_{ca}(e_{1} + e_{12}) + \lambda_{cb}(e_{132} + e_{31})) e_{23} - b^2a^2 e_{23}) \notag\\
&\phantom{xxx} - 12 \lambda_{ab}\lambda_{ac} \notag\\
&= \chi_{H^*}(- (\lambda_{ca}(e_{23} + e_{132}) + \lambda_{cb}(e_{12} + e_{123}))(\lambda_{ab}(e_{1} + e_{23}) + \lambda_{ac}(e_{132} + e_{12})) e_{23} \notag\\
&\phantom{xxxxxxx} + (\lambda_{bc}(e_{12} + e_{132}) + \lambda_{ba}(e_{13} + e_{123}))(\lambda_{ca}(e_{1} + e_{12}) + \lambda_{cb}(e_{132} + e_{31})) e_{23} \notag\\
&\phantom{xxxxxxx} - b^2a^2 e_{23}) \notag\\
&\phantom{xxx} - 12 \lambda_{ab}\lambda_{ac} \notag\\
&= \chi_{H^*}(- \lambda_{ca}\lambda_{ab} e_{23} - b^2a^2 e_{23}) - 12 \lambda_{ab}\lambda_{ac} \notag\\
&= \chi_{H^*}(- \lambda_{ca}\lambda_{ab} e_{23} \notag\\
&\phantom{xxxxxxx} - (\lambda_{bc}(e_{12} + e_{132}) + \lambda_{ba}(e_{13} + e_{123}))(\lambda_{ab}(e_{13} + e_{132}) + \lambda_{ac}(e_{23} + e_{123})) e_{23}) \notag\\
&\phantom{xxx} - 12 \lambda_{ab}\lambda_{ac} \notag\\
&= \chi_{H^*}(- \lambda_{ca}\lambda_{ab} e_{23}) - 12 \lambda_{ab}\lambda_{ac} \notag\\
&= \chi_{H^*}(\lambda_{ac}\lambda_{ab} e_{23}) - 12 \lambda_{ab}\lambda_{ac} \notag\\
&= 12 \lambda_{ac}\lambda_{ab} - 12 \lambda_{ab}\lambda_{ac} \notag\\
&= 0. \notag
\end{align}
\begin{align*}
\chi_{H^*}(abac\ e_{132}) &= \chi_{H^*}(baca\ e_{(12)(132)}) \\
&= \chi_{H^*}(baca\ e_{31}) \\
&\stackrel{\eqref{eq:baca}}{=} \chi_{H^*}( - abac\ e_{31} - ca^2c\ e_{31} + bc^2b\ e_{31} - b^2a^2 e_{31}) \\
&\stackrel{\eqref{tr12-31}}{=} \chi_{H^*}( - ca^2c\ e_{31} + bc^2b\ e_{31} - b^2a^2 e_{31}) \\
&\stackrel{c^2 e_{31} = 0}{=} \chi_{H^*}(bc^2b\ e_{31} - b^2a^2 e_{31}) \\
&= \chi_{H^*}(b(\lambda_{ca}(e_{23} + e_{132}) + \lambda_{cb}(e_{12} + e_{123}))b\ e_{31} - b^2a^2 e_{31}) \\
&= \chi_{H^*}(b^2(\lambda_{ca}(e_{(23)(23)} + e_{(23)(132)}) + \lambda_{cb}(e_{(23)(12)} + e_{(23)(123)})) e_{31} - b^2a^2 e_{31}) \\
&= \chi_{H^*}(b^2(\lambda_{ca}(e_{1} + e_{12}) + \lambda_{cb}(e_{132} + e_{31})) e_{31} - b^2a^2 e_{31}) \\
&= \chi_{H^*}(b^2\lambda_{cb} e_{31} - b^2a^2 e_{31}) \\
&= \chi_{H^*}((\lambda_{bc}(e_{12} + e_{132}) + \lambda_{ba}(e_{13} + e_{123}))\lambda_{cb} e_{31} \notag\\ &\phantom{xxxxxxx}- (\lambda_{bc}(e_{12} + e_{132}) + \lambda_{ba}(e_{13} + e_{123}))a^2 e_{31}) \\
&= \chi_{H^*}(\lambda_{ba}\lambda_{cb} e_{31} - \lambda_{ba}a^2 e_{31}) \\
&= \chi_{H^*}(\lambda_{ba}\lambda_{cb} e_{31} - \lambda_{ba}(\lambda_{ab}(e_{13} + e_{132}) + \lambda_{ac}(e_{23} + e_{123})) e_{31}) \\
&= \chi_{H^*}(\lambda_{ba}\lambda_{cb} e_{31} - \lambda_{ba}\lambda_{ab} e_{31}) \\
&= \chi_{H^*}(\lambda_{ba} (\lambda_{cb} - \lambda_{ba}) e_{31}) \\
&= \chi_{H^*}(\lambda_{ba} \lambda_{ca} e_{31}) \\
&= \chi_{H^*}(\lambda_{ab} \lambda_{ac} e_{31}) \\
&= 12 \lambda_{ab} \lambda_{ac} .
\end{align*}
Summarizing our above calculations we finally obtain:
\begin{proposition}
\begin{align*}
\ph = \frac{1}{\dim H} \chi_{H^*} &= \frac{1}{72} \bigg( \sum_{g \in S_3} 12 g + 12 \lambda_{ab}\lambda_{ac} (\delta_{abac\ e_{23}} + \delta_{abac\ e_{132}}) \bigg) \\ &= \frac{1}{6} \bigg( \sum_{g \in S_3} g + \lambda_{ab}\lambda_{ac} (\delta_{abac\ e_{23}} + \delta_{abac\ e_{132}}) \bigg).
\end{align*}
\end{proposition}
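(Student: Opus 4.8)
The plan is to read off $\chi_{H^*}\in H^{**}\cong H$ by expanding it in the dual basis. Writing $\{v\}$ for the basis $\{1,a,b,c,ab,bc,ac,cb,aba,abc,bac,abac\}\times\{e_g\mid g\in S_3\}$ of $H^*$ and $\{\delta_v\}$ for the dual basis of $H$, the coefficient of $\delta_v$ in any element of $H^{**}\cong H$ is its value on $v$, so $\chi_{H^*}=\sum_v \chi_{H^*}(v)\,\delta_v$. Under the pairing $\langle g,e_h\rangle=\delta_{g,h}$ the degree-zero dual vectors $\delta_{e_g}$ are exactly the group-likes $g\in S_3\subseteq H$, which is why the degree-zero part will appear as $\sum_g 12\,g$. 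Hence the whole computation reduces to evaluating the $72$ traces $\chi_{H^*}(v)=\operatorname{tr}(L_v)$ of left multiplication $L_v$ on $H^*$ and dividing by $\dim H=72$; I would organise this along the coalgebra $\NN_{\geq 0}$-grading.

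In low degrees the traces are immediate and have already been recorded in the text. On the degree-zero part each $e_g$ acts by the idempotent projection onto $e_gH^*$, whose image is $12$-dimensional, so $\chi_{H^*}(e_g)=12$. For degrees $1,2,3$ I would use that left multiplication by a positive-degree word $w$ raises the grading and that the rules $a e_g=e_{(12)g}a$, $b e_g=e_{(23)g}b$, $c e_g=e_{(31)g}c$ move $e_g$ by the permutation attached to $w$; since that permutation is never the identity in these degrees, $w\,e_g$ squares to zero (e.g.\ $a e_g a e_g=a^2 e_{(12)g}e_g=0$), so $L_{w\,e_g}$ is nilpotent and contributes nothing to the trace. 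Thus only the degree-zero idempotents and the degree-four monomials $abac\,e_g$ can give a nonzero trace.

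The substance of the argument is therefore the degree-four part. Here the permutation attached to $abac$ is trivial, $(31)(12)(23)(12)=1$, so by \eqref{abacegabaceg} one has $(abac\,e_g)^2=(abac)^2 e_g$ and $abac\,e_g$ need not be nilpotent; a genuine computation is unavoidable, and its engine is the closed form for $(abac)^2$ in Lemma \ref{lem:abacabac}. Combining that lemma with \eqref{abacegabaceg} gives $(abac\,e_{12})^2=0=(abac\,e_{31})^2$, hence $\chi_{H^*}(abac\,e_{12})=\chi_{H^*}(abac\,e_{31})=0$ as in \eqref{tr12-31}. For the remaining four values I would exploit the cyclicity of the trace to replace $abac\,e_g$ by $baca$ with a shifted idempotent, then apply the reduction \eqref{eq:baca}, which rewrites each trace as a combination of traces of lower-degree monomials times idempotents that are disposed of by the same orthogonality bookkeeping. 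The results are $\chi_{H^*}(abac\,e_{23})=\chi_{H^*}(abac\,e_{132})=12\lambda_{ab}\lambda_{ac}$ (cf.\ \eqref{tr23}) and $\chi_{H^*}(abac\,e_1)=\chi_{H^*}(abac\,e_{123})=0$.

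Assembling the surviving terms gives
\[
\chi_{H^*}=\sum_{g\in S_3}12\,\delta_{e_g}+12\lambda_{ab}\lambda_{ac}\bigl(\delta_{abac\,e_{23}}+\delta_{abac\,e_{132}}\bigr),
\]
and dividing by $\dim H=72$ together with $\delta_{e_g}=g$ yields the claimed formula for $\ph$. The hard part will lie entirely in Lemma \ref{lem:abacabac} and the four cyclic-trace reductions: keeping track of how $abac$ permutes the idempotents $e_g$ via the $S_3$-action, and of the cancellations that collapse the quartic relations back down to degree four, are the only delicate points, whereas the surrounding steps are merely the formal expansion of a character in a dual basis.
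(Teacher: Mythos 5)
Your proposal is correct and follows essentially the same route as the paper: organising the trace computation by the coalgebra grading, getting $12$ on each degree-zero idempotent, nilpotence of $w\,e_g$ in degrees $1$--$3$, and then handling the degree-four monomials $abac\,e_g$ via the closed form for $(abac)^2$ (Lemma \ref{lem:abacabac}) together with cyclicity of the trace and the reduction \eqref{eq:baca}, arriving at the same surviving values $\chi_{H^*}(abac\,e_{23})=\chi_{H^*}(abac\,e_{132})=12\lambda_{ab}\lambda_{ac}$. This is precisely the paper's argument, so nothing further is needed.
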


Since the character of the sign representation is $\chi_\sgn = e_1 - e_{12} - e_{23} - e_{31} + e_{123} + e_{132}$, we have therefore:
\[ \phs{\sgn} = \ph(\chi_\sgn \cdot -) = \frac{1}{6} \bigg( 1 - (12) - (23) - (31) + (123) + (132) + \lambda_{ab}\lambda_{ac} (-\delta_{abac\ e_{23}} + \delta_{abac\ e_{132}}) \bigg) .  \]

Furthermore, we have:
\begin{align*}
\phs{V} = \ph(\chi_V \cdot -) &= \frac{1}{6} \bigg( \sum_{g \in S_3} g + \lambda_{ab}\lambda_{ac} (\delta_{abac\ e_{23}} + \delta_{abac\ e_{132}}) \bigg) ((2 e_1 - e_{123} - e_{132})\cdot -) \\ &= \frac{1}{6} ( 2 - (123) - (132) - \lambda_{ab}\lambda_{ac} \delta_{abac\ e_{132}} ).
\end{align*}

Evidently, the images of these elements $\phs{i}$ under the surjection $H \lto \kk S_3$ to the maximal semisimple quotient algebra $\kk S_3$ are the orthogonal idempotents of $\kk S_3$ for the unique isotypic decomposition of $\kk S_3$.
Hence, by Lemma \ref{lem:isotypic-decomposition-as-a-lift-of-the-center}, it only remains to check that the $\phs{i}$ are orthogonal idempotents in $H$ in order to prove that they provide an isotypic decomposition of $H$.

\subsubsection{Idempotence and orthogonality of $\ph$, $\phs{\sgn}$ and $\phs{V}$}

Concerning the question whether these elements $\ph$, $\phs{\sgn}$ and $\phs{V} \in H$ satisfy our Conjecture \ref{conjecture}, we can apply our general results from Subsections \ref{subsec:idempotence-of-conjectured-idempotents} and \ref{subsec:the-phs-sum-up-to-one}.
We obtain from Theorem \ref{thm:idempotents_for_one-dimensional_simples} that $\ph$ and $\phs{\sgn}$ are idempotents projecting to isotypic components of $H$ of trivial type and sign representation type, respectively.
Moreover, we obtain from Theorem \ref{thm:the-phs-sum-up-to-one} that $\ph + \phs{\sgn} + \phs{V} = 1$, if we can verify that $H$ satisfies the assumptions of that proposition.

Indeed, we can ensure that the subalgebra $\Lambda_0 H^* \Lambda_0 \subseteq H^*$ has only one simple representation up to isomorphism.
The Haar integral of the semisimple sub-Hopf-algebra $H^*_0 = \kk^{S_3} \subseteq H^*$ is given by the idempotent $e_1 \in \kk^{S_3}$.
With this we can compute that $\Lambda_0 H^* \Lambda_0 = e_1 H^* e_1 = \kk\{ e_1, abac e_1 \}$, since $1$ and $abac \in B(V)$ span the subspace of $S_3$-degree $1$ in $B(V)$.
$e_1$ is the unit of the algebra $\Lambda_0 H^* \Lambda_0$ and, furthermore, we have $(abac e_1)^2 = 0$ by Lemma \ref{lem:abacabac}.
Essentially, the reason for this is that the deformed relations for the squares $a^2$, $b^2$ and $c^2$ take values in the kernel of right (or left) multiplication by $\Lambda_0 = e_1$.
The algebra $\Lambda_0 H^* \Lambda_0$ is therefore isomorphic to the two-dimensional algebra $\kk[x] / (x^2)$, which indeed has a unique simple representation.

Finally, with the help of a computation with the computer algebra system Magma, as can be seen in the Appendix, we can extend these results to the statement that all three $\ph$, $\phs{\sgn}$ and $\phs{V}$ are idempotents and pairwise orthogonal.

\appendix

\section{Calculations with Magma}

We describe here a Magma code for calculating explicitly the products of the different conjectured idempotents $\phs{i}$  for the Hopf algebra discussed in Subsection \ref{subsec:example-72-dim}.
We begin with the remark that, when considering the grading we have for $H^*$, $$H^*=\bigoplus_{k=0}^4 H^*_k,$$
the only direct summand on which the product $\phs{i} \phs{j}$ (where  $i\neq j$) might not vanish  is $H^*_4$. 
This follows from the fact that all $\phs{i}$'s vanish on $H_k$ for $k\neq 0,4$, and on the coalgebra grading. 
The calculation with Magma will be done in the following way:
for different values of $\lambda_a,\lambda_b,\lambda_c$ we will define the algebra $A=H^*$ in Magma. 
Then we will present it in a matrix form, and calculate the trace of the regular representation, as well as the translations of this trace by multiples of irreducible characters. For the calculations of the product we will calculate $(\phs{i}\ot 1)\Delta(abace_g)$ where $g\in G$ and $\phs{i} \in \{p,p_V\}$ by hand, and apply the relevant functionals $\phs{j}\in \{p,p_{\sgn},p_V\}$ to them. 
Finally, since all the relevant values are polynomials of degree at most 3 in $\lambda_{ab}\lambda_{ac}$ it will be enough to show that they vanish on four different values of $\lambda_{ab}\lambda_{ac}$. 

The code is enclosed here. We ran it on \verb+http://magma.maths.usyd.edu.au/calc/+, the online version of Magma.

\begin{verbatim}
/* Values of lambdas */
lam_a:=0;
lam_b:=23;
lam_c:=11;

K:=RationalField();
A<e1,e2,e3,e4,e5,e6,a,b,c>:= FPAlgebra<K, e1,e2,e3,e4,e5,e6,a,b,c|
e1*e1-e1, e2*e1, e3*e1, e4*e1, e5*e1, e6*e1,
e1*e2, e2*e2-e2, e3*e2, e4*e2, e5*e2, e6*e2,
e1*e3, e2*e3, e3*e3-e3, e4*e3, e5*e3, e6*e3,
e1*e4, e2*e4, e3*e4, e4*e4-e4, e5*e4, e6*e4,
e1*e5, e2*e5, e3*e5, e4*e5, e5*e5-e5, e6*e5,
e1*e6, e2*e6, e3*e6, e4*e6, e5*e6, e6*e6-e6,e1+e2+e3+e4+e5+e6-1,
a*e1-e2*a,a*e2-e1*a, a*e3-e5*a,e5*a-a*e3,a*e4-e6*a,a*e6-e4*a,
b*e1-e3*b,b*e3-e1*b, b*e4-e5*b,e5*b-b*e4,b*e2-e6*b,b*e6-e2*b,
c*e1-e4*c,c*e4-e1*c, c*e2-e5*c,e5*c-c*e2,c*e3-e6*c,c*e6-e3*c,
a*b+b*c+c*a, a*c+c*b+b*a,
a^2 - (lam_a-lam_b)*(e4+e6) - (lam_a-lam_c)*(e3+e5),
b^2 - (lam_b-lam_c)*(e2+e6) - (lam_b-lam_a)*(e4+e5),
c^2 - (lam_c-lam_a)*(e3+e6) - (lam_c-lam_b)*(e2+e5)>;
/* Defining A=H^* by generators and relations */
D:=Dimension(A); 
S,f:= Algebra(A); /* S is now the algebra A considered as a subalgebra of the
72 x 72 matrix algebra. f:A\to S is the natural isomorphism */
Y:=AssociativeArray();

B,h:=ChangeBasis(S,[f(e1),f(e2),f(e3),f(e4),f(e5),f(e6),
f(a*e1),f(a*e2),f(a*e3),f(a*e4),f(a*e5),f(a*e6),
f(b*e1),f(b*e2),f(b*e3),f(b*e4),f(b*e5),f(b*e6),
f(c*e1),f(c*e2),f(c*e3),f(c*e4),f(c*e5),f(c*e6),
f(a*b*e1),f(a*b*e2),f(a*b*e3),f(a*b*e4),f(a*b*e5),f(a*b*e6),
f(b*c*e1),f(b*c*e2),f(b*c*e3),f(b*c*e4),f(b*c*e5),f(b*c*e6),
f(a*c*e1),f(a*c*e2),f(a*c*e3),f(a*c*e4),f(a*c*e5),f(a*c*e6),
f(c*b*e1),f(c*b*e2),f(c*b*e3),f(c*b*e4),f(c*b*e5),f(c*b*e6),
f(a*b*a*e1),f(a*b*a*e2),f(a*b*a*e3),f(a*b*a*e4),f(a*b*a*e5),f(a*b*a*e6),
f(a*b*c*e1),f(a*b*c*e2),f(a*b*c*e3),f(a*b*c*e4),f(a*b*c*e5),f(a*b*c*e6),
f(b*a*c*e1),f(b*a*c*e2),f(b*a*c*e3),f(b*a*c*e4),f(b*a*c*e5),f(b*a*c*e6),
f(a*b*a*c*e1),f(a*b*a*c*e2),f(a*b*a*c*e3),
f(a*b*a*c*e4),f(a*b*a*c*e5),f(a*b*a*c*e6)]);

/* we now fix for S the basis described in the paper.
This is given by the algebra B. The map h:S\to B is then the isomorphism */

for i:=1 to D do
Y[i]:=0;
for j:=1 to D do
Y[i]:= Y[i] + BasisProduct(B,i,j)[j]/72;
end for;
end for;

/* We calculate p as the trace of the regular representation 
divided by the dimension. Notice that we think of p as an element in H=A^*. */

"print p";
for i:=1 to D do
Y[i];
end for;
"end p";
"";

chi:= e1-e2-e3-e4+e5+e6;
chiV:= 2*e1 - e5-e6;

/* the characters of the two non-trivial representations of A^*. 
Both are elements of A */

Z:=AssociativeArray();
for i:=1 to D do
Z[i]:=0;
for j:=1 to D do
Z[i]:= Z[i] + (h(f(chi))*BasisProduct(B,i,j))[j]/72;
end for;
end for;

/* The array Z contains now the translation of p by the sign representation. 
In other words, it is p_{sign}, considered as an element of A^*. */

W:=AssociativeArray();
for i:=1 to D do
W[i]:=0;
for j:=1 to D do
W[i]:= W[i] + 2*(h(f(chiV))*BasisProduct(B,i,j))[j]/72;
end for;
end for;

/* Similarly, we calculate p_V for the 
2-dimensional irreducible representation of A. */ 

E2:=AssociativeArray();
for i:=1 to D do
E2[i]:=Y[i]+Z[i] + 2*W[i];
end for;

"print epsilon";
for i:=1 to D do
E2[i];
end for;
/* We calculate and print the sum p + p_{sign} + p_V.
If it is the counit, then we are on the right path. */ 

"print p_sign";
for i:=1 to D do
Z[i];
end for;
"end p_sign";
"";

"print p_V";
for i:=1 to D do
W[i];
end for;
"end p_V";
"";

/* Next, we calculated manually the 
elements v_i:=(p \otimes 1)\Delta(a*b*a*c*ei). */
v1:= h(f(1/6*(lam_a-lam_b)*(lam_a-lam_c)*(e3 + e5) +
1/6*((lam_c-lam_a)*b*b*e6 - (lam_a-lam_b)*c*c*e4- 
(lam_a-lam_b)*a*a*e5 - (lam_a-lam_c)*a*a*e3)+ 
1/6*(a*b*a*c*e1 + a*c*a*b*e2 + c*b*c*a*e3 + 
b*a*b*c*e4 + c*a*c*b*e6 + b*c*b*a*e5)));

v2:= h(f(1/6*(lam_a-lam_b)*(lam_a-lam_c)*(e6 + e4) +
1/6*((lam_c-lam_a)*b*b*e3 - (lam_a-lam_b)*c*c*e5- 
(lam_a-lam_b)*a*a*e4 - (lam_a-lam_c)*a*a*e6)+
1/6*(a*b*a*c*e2 + a*c*a*b*e1 + c*b*c*a*e6 +
b*a*b*c*e5 + c*a*c*b*e3 + b*c*b*a*e4)));

v3:= h(f(1/6*(lam_a-lam_b)*(lam_a-lam_c)*(e1 + e2) +
1/6*((lam_c-lam_a)*b*b*e4 - (lam_a-lam_b)*c*c*e6-
(lam_a-lam_b)*a*a*e2 - (lam_a-lam_c)*a*a*e1)+
1/6*(a*b*a*c*e3 + a*c*a*b*e5 + c*b*c*a*e1 +
b*a*b*c*e6 + c*a*c*b*e4 + b*c*b*a*e2)));

v4:= h(f(1/6*(lam_a-lam_b)*(lam_a-lam_c)*(e5 + e3) +
1/6*((lam_c-lam_a)*b*b*e2 - (lam_a-lam_b)*c*c*e1- 
(lam_a-lam_b)*a*a*e3 - (lam_a-lam_c)*a*a*e5)+
1/6*(a*b*a*c*e4 + a*c*a*b*e6 + c*b*c*a*e5 + 
b*a*b*c*e1 + c*a*c*b*e2 + b*c*b*a*e3)));

v5:= h(f(1/6*(lam_a-lam_b)*(lam_a-lam_c)*(e4 + e6) +
1/6*((lam_c-lam_a)*b*b*e1 - (lam_a-lam_b)*c*c*e2- 
(lam_a-lam_b)*a*a*e6 - (lam_a-lam_c)*a*a*e4)+
1/6*(a*b*a*c*e5 + a*c*a*b*e3 + c*b*c*a*e4 + 
b*a*b*c*e2 + c*a*c*b*e1 + b*c*b*a*e6)));

v6:=h(f(1/6*(lam_a-lam_b)*(lam_a-lam_c)*(e2 + e1) +
1/6*((lam_c-lam_a)*b*b*e5 - (lam_a-lam_b)*c*c*e3- 
(lam_a-lam_b)*a*a*e1 - (lam_a-lam_c)*a*a*e2)+
1/6*(a*b*a*c*e6 + a*c*a*b*e4 + c*b*c*a*e2 + 
b*a*b*c*e3 + c*a*c*b*e5 + b*c*b*a*e1)));

E:= AssociativeArray();
for i:=1 to 6 do
E[i]:=0;
end for;

for i:= 1 to D do
E[1]:= E[1] + Y[i]*v1[i];
E[2]:= E[2] + Y[i]*v2[i];
E[3]:= E[3] + Y[i]*v3[i];
E[4]:= E[4] + Y[i]*v4[i];
E[5]:= E[5] + Y[i]*v5[i];
E[6]:= E[6] + Y[i]*v6[i];
end for;

/* This calculates p (p \otimes 1)\Delta(a*b*a*c*ei) = p p (a*b*a*c*ei). 
Since a*b*a*c*ei are the only elements on which p^2 might be non-zero, 
it is enough to consider them. 
After that we do a similar calculation for p*p_V and p*p_{sign}.*/ 
"results of p*p - p";
for i:=1 to 6 do
E[i]-Y[D-6+i];
end for;

for i:=1 to 6 do
E[i]:=0;
end for;

for i:= 1 to D do
E[1]:= E[1] + Z[i]*v1[i];
E[2]:= E[2] + Z[i]*v2[i];
E[3]:= E[3] + Z[i]*v3[i];
E[4]:= E[4] + Z[i]*v4[i];
E[5]:= E[5] + Z[i]*v5[i];
E[6]:= E[6] + Z[i]*v6[i];
end for;

"Results of p*p_{sign}";
for i:=1 to 6 do
E[i];
end for;

for i:=1 to 6 do
E[i]:=0;
end for;

for i:= 1 to D do
E[1]:= E[1] + W[i]*v1[i];
E[2]:= E[2] + W[i]*v2[i];
E[3]:= E[3] + W[i]*v3[i];
E[4]:= E[4] + W[i]*v4[i];
E[5]:= E[5] + W[i]*v5[i];
E[6]:= E[6] + W[i]*v6[i];
end for;
"results of p*p_V";
for i:=1 to 6 do
E[i];
end for;

/* Similarly to the elements vi from the previous part, we define 
wi= (p_V \otimes 1)\Delta(a*b*a*c*ei) and similarly calculate the products.*/
w1:= 2*h(f(1/6*((lam_a-lam_c)*b*b*e6 + (lam_a-lam_b)*a*a*e5) -
1/6*((lam_a-lam_b)*(lam_a-lam_c)*e5)
+ 1/3*a*b*a*c*e1 - 1/6*(c*a*c*b*e6 + b*c*b*a*e5)));

w2:= 2*h(f(1/6*((lam_a-lam_c)*b*b*e3 + (lam_a-lam_b)*a*a*e4) -
1/6*((lam_a-lam_b)*(lam_a-lam_c)*e4)
+ 1/3*a*b*a*c*e2 - 1/6*(c*a*c*b*e3 + b*c*b*a*e4)));

w3:= 2*h(f(1/6*((lam_a-lam_c)*b*b*e4 + (lam_a-lam_b)*a*a*e2) -
1/6*((lam_a-lam_b)*(lam_a-lam_c)*e2)
+ 1/3*a*b*a*c*e3 - 1/6*(c*a*c*b*e4 + b*c*b*a*e2)));

w4:= 2*h(f(1/6*((lam_a-lam_c)*b*b*e2 + (lam_a-lam_b)*a*a*e3) -
1/6*((lam_a-lam_b)*(lam_a-lam_c)*e3)
+ 1/3*a*b*a*c*e4 - 1/6*(c*a*c*b*e2 + b*c*b*a*e3)));

w5:= 2*h(f(1/6*((lam_a-lam_c)*b*b*e1 + (lam_a-lam_b)*a*a*e6) -
1/6*((lam_a-lam_b)*(lam_a-lam_c)*e6)
+ 1/3*a*b*a*c*e5 - 1/6*(c*a*c*b*e1 + b*c*b*a*e6)));

w6:= 2*h(f(1/6*((lam_a-lam_c)*b*b*e5 + (lam_a-lam_b)*a*a*e1) -
1/6*((lam_a-lam_b)*(lam_a-lam_c)*e1)
+ 1/3*a*b*a*c*e6 - 1/6*(c*a*c*b*e5 + b*c*b*a*e1)));

/*
"print ws";
for i:= 1 to D do
w1[i], w2[i], w3[i], w4[i], w5[i], w6[i];
end for;
*/
for i:=1 to 6 do
E[i]:=0;
end for;

for i:= 1 to D do
E[1]:= E[1] + Y[i]*w1[i];
E[2]:= E[2] + Y[i]*w2[i];
E[3]:= E[3] + Y[i]*w3[i];
E[4]:= E[4] + Y[i]*w4[i];
E[5]:= E[5] + Y[i]*w5[i];
E[6]:= E[6] + Y[i]*w6[i];
end for;

" ";
"results of p_V*p";
for i:=1 to 6 do
E[i];
end for;

for i:=1 to 6 do
E[i]:=0;
end for;

for i:= 1 to D do
E[1]:= E[1] + Z[i]*w1[i];
E[2]:= E[2] + Z[i]*w2[i];
E[3]:= E[3] + Z[i]*w3[i];
E[4]:= E[4] + Z[i]*w4[i];
E[5]:= E[5] + Z[i]*w5[i];
E[6]:= E[6] + Z[i]*w6[i];
end for;

"Results of p_V*p_{sign}";
for i:=1 to 6 do
E[i];
end for;

for i:=1 to 6 do
E[i]:=0;
end for;

for i:= 1 to D do
E[1]:= E[1] + W[i]*w1[i];
E[2]:= E[2] + W[i]*w2[i];
E[3]:= E[3] + W[i]*w3[i];
E[4]:= E[4] + W[i]*w4[i];
E[5]:= E[5] + W[i]*w5[i];
E[6]:= E[6] + W[i]*w6[i];
end for;

"results of p_V*p_V - p_V";
for i:=1 to 6 do
E[i]-W[D-6+i];
end for; 
\end{verbatim}

\end{document}